\newtheorem{theorem}{Theorem}[section]
\newtheorem{lemma}{Lemma}[section]
\newtheorem{proposition}{Proposition}[section]
\newtheorem{remark}{Remark}[section]
\newtheorem{example}{Example}[section]
\newenvironment{proof}[1][Proof]{\noindent\textbf{#1.} }{\ \rule{0.5em}{0.5em}}
\begin{document}

\title{A bipolynomial fractional\\Dirichlet-Laplace problem}
\author{\textbf{{\large Dariusz Idczak}}\vspace{1mm}\\\textit{{\small Faculty of Mathematics and Computer Science}}\\\textit{{\small University of Lodz}}\\\textit{{\small 90-238 Lodz, Banacha 22}}\\\textit{{\small Poland}}\\{\small e-mail: idczak@math.uni.lodz.pl}}
\date{{\small \ }}
\maketitle

\begin{abstract}
In the paper, we derive an existence result for a nonlinear nonautonomous
partial elliptic system on an open bounded domain with Dirichlet boundary
conditions, containg fractional powers of \ the weak Dirichlet-Laplace
operator that are meant in the Stone-von Neumann operator calculus sense. We
apply a variational method which gives strong solutions of the problem under consideration.

\end{abstract}

\noindent\textit{{\footnotesize 2010 Mathematics Subject Classification}}.
{\scriptsize 35J91, 47B25, 47F05}.\newline\textit{{\footnotesize Key words}}.
{\scriptsize Fractional Dirichlet-Laplace operator, Stone-von Neumann operator
calculus, variational method}

\section{\textbf{Introduction}}

In the paper, we study strong solutions to the following general problem%
\begin{equation}%
{\displaystyle\sum\nolimits_{i,j=0}^{k}}
\alpha_{i}\alpha_{j}[(-\Delta)_{\omega}]^{\beta_{i}+\beta_{j}}u(x)=D_{u}%
F(x,u(x)),\ x\in\Omega\text{ a.e.}, \label{problemgeneral}%
\end{equation}
where $\alpha_{i}>0$ for $i=0,...,k$ ($k\in\mathbb{N}\cup\{0\}$) and
$0\leq\beta_{0}<\beta_{1}<...<\beta_{k}$, $[(-\Delta)_{\omega}]^{\gamma}$ is a
$\gamma$-power (in the sense of Stone-von Neumann operator calculus) of a
self-adjoint extension $(-\Delta)_{\omega}:D((-\Delta)_{\omega})\subset
L^{2}\rightarrow L^{2}$ (let us name it weak Dirichlet-Laplace operator) of
the Dirichlet-Laplace operator $(-\Delta):C_{c}^{\infty}\subset L^{2}%
\rightarrow L^{2}$, $\Omega\subset\mathbb{R}^{N}$ is a bounded open set,
$F:\Omega\times\mathbb{R}\rightarrow\mathbb{R}$, $D_{u}F$ is the partial
derivative of $F$ with respect to $u$, $C_{c}^{\infty}=C_{c}^{\infty}%
(\Omega,\mathbb{R})$, $L^{2}=L^{2}(\Omega,\mathbb{R})$ are real spaces of
smooth functions with compact support and square integrable functions, respectively.

Particular cases of the above problem are: the classical Dirichlet-Laplace
problem%
\begin{equation}
\lbrack(-\Delta)_{\omega}]u(x)=D_{u}F(x,u(x)),\ x\in\Omega\text{ a.e.},
\label{prob1}%
\end{equation}
the biharmonic equation%
\begin{equation}
\lbrack(-\Delta)_{\omega}]^{2}u(x)=D_{u}F(x,u(x)),\ x\in\Omega\text{ a.e.},
\label{prob2}%
\end{equation}
as well as the standard fractional problem%
\begin{equation}
\lbrack(-\Delta)_{\omega}]^{\beta}u(x)=D_{u}F(x,u(x)),\ x\in\Omega\text{
a.e.}, \label{prob3}%
\end{equation}

In recent years, fractional laplacians are extensively studied due to their
numerous applications. The authors use the different approaches to such
operators. Definition of the fractional Dirichlet-Laplacian adopted in our
paper comes from the Stone-von Neumann operator calculus and is based on the
spectral integral representation theorem for a self-adjoint operator in
Hilbert space. It reduces to a \textit{series}\ form which is taken by some
authors as a starting point (see \cite{Barrios}, \cite{Bors2}, \cite{CabreTan}%
) but our more general approach allows us to obtain useful properties of
fractional operators in a smart way.

The aim of our paper is to present the Stone-von Neumann approach and to
extend a variational method to bipolynomial problem (\ref{problemgeneral}).
Such a method was applied by other authors (see \cite{Barrios}, \cite{Bors2})
but to problems containing a single fractional Dirichlet-Laplacian. An
important issue of our study is an equivalence of the solutions obtained with
the aid of this method and the strong ones, that, to the best of our
knowledge, was not investigated by other authors.

The paper consists of two parts. In the first part, we give some basics from
the spectral theory of self-adjoint operators in real Hilbert space (Stone-von
Neumann operator calculus), recall the Friedrich's procedure of construction
of a self-adjoint extension of the symmetric operator and apply this procedure
to the Dirichlet-Laplace operator. In the second part, we investigate the
powers of the weak Dirichlet-Laplace operator (including their domains), a
connection between weak and strong solutions of a bipolynomial equation with a
self-adjoint operator and apply an extension of the mentioned variational
method to problem (\ref{problemgeneral}).

\section{Preliminaries}

\subsection{Self-adjoint operators in Hilbert space}

This subsection contains the results from the theory of self-adjoint operators
in real Hilbert space. Results presented in this section comes from
\cite{Alex}, \cite{Mlak} where they are derived in the case of complex Hilbert
space but their proofs can be moved without any or with small changes to the
case of real Hilbert space.

Let $H$ be a real Hilbert space with a scalar product $\left\langle
\cdot,\cdot\right\rangle :H\times H\rightarrow\mathbb{R}$. Denote by $\Pi(H)$
the set of all projections of $H$ on closed linear subspaces and by
$\mathcal{B}$ - the $\sigma$-algebra of Borel subsets of $\mathbb{R}$. By the
spectral measure in $\mathbb{R}$ we mean a set function $E:\mathcal{B}%
\rightarrow\Pi(H)$ that satisfies the following conditions:

\begin{itemize}
\item[$\cdot$] for any $x\in H$, the function%
\begin{equation}
\mathcal{B}\ni P\longmapsto E(P)x\in H \label{vm}%
\end{equation}
is a vector measure

\item[$\cdot$] $E(\mathbb{R})=I$ (identity operator)

\item[$\cdot$] $E(P\cap Q)=E(P)\circ E(Q)$ for $P,Q\in\mathcal{B}$.
\end{itemize}

\noindent By a support of a spectral measure $E$ we mean the complement of the
sum of all open subsets of $\mathbb{R}$ with zero spectral measure.

If $b:\mathbb{R}\rightarrow\mathbb{R}$ is a bounded Borel measurable function,
defined $E$ - a.e., then the integral $%
{\displaystyle\int\nolimits_{-\infty}^{\infty}}
b(\lambda)E(d\lambda)$ is defined by%
\[
(%
{\displaystyle\int\nolimits_{-\infty}^{\infty}}
b(\lambda)E(d\lambda))x=%
{\displaystyle\int\nolimits_{-\infty}^{\infty}}
b(\lambda)E(d\lambda)x
\]
for any $x\in H$ where the integral $%
{\displaystyle\int\nolimits_{-\infty}^{\infty}}
b(\lambda)E(d\lambda)x$ (with respect to the vector measure (\ref{vm})) is
defined in a standard way, with the aid of the sequence of simple functions
converging $E(d\lambda)x$ - a.e. to $b$ (see \cite{Alex}).

If $b:\mathbb{R}\rightarrow\mathbb{R}$ is an unbounded Borel measurable
function, defined $E$ - a.e., then, for any $x\in H$ such that
\begin{equation}%
{\displaystyle\int\nolimits_{-\infty}^{\infty}}
\left\vert b(\lambda)\right\vert ^{2}\left\Vert E(d\lambda)x\right\Vert
^{2}<\infty\label{fm}%
\end{equation}
(the above integral is taken with respect to the nonnegative measure
$\mathcal{B\ni}P\longmapsto\left\Vert E(P)x\right\Vert ^{2}\in\mathbb{R}%
_{0}^{+}$), there exists the limit%
\[
\lim%
{\displaystyle\int\nolimits_{-\infty}^{\infty}}
b_{n}(\lambda)E(d\lambda)x
\]
of integrals where
\[
b_{n}:\mathbb{R\ni\lambda\longmapsto}\left\{
\begin{array}
[c]{ccc}%
b(\lambda) & \text{as} & \left\vert b(\lambda)\right\vert \leq n\\
0 & \text{as} & \left\vert b(\lambda)\right\vert >n
\end{array}
\right.
\]
for $n\in\mathbb{N}$. Let us denote the set of all points $x$ with property
(\ref{fm}) by $D$. One proves that $D$ is a dense linear subspace of $H$ and
by $%
{\displaystyle\int\nolimits_{-\infty}^{\infty}}
b(\lambda)E(d\lambda)$ one denotes the operator%
\[%
{\displaystyle\int\nolimits_{-\infty}^{\infty}}
b(\lambda)E(d\lambda):D\subset H\rightarrow H
\]
given by%
\[
(%
{\displaystyle\int\nolimits_{-\infty}^{\infty}}
b(\lambda)E(d\lambda))x=\lim%
{\displaystyle\int\nolimits_{-\infty}^{\infty}}
b_{n}(\lambda)E(d\lambda)x.
\]
Of course, $D=H$ and
\[
\lim%
{\displaystyle\int\nolimits_{-\infty}^{\infty}}
b_{n}(\lambda)E(d\lambda)x=%
{\displaystyle\int\nolimits_{-\infty}^{\infty}}
b(\lambda)E(d\lambda)x
\]
when $b:\mathbb{R}\rightarrow\mathbb{R}$ is a bounded Borel measurable
function, defined $E$ - a.e.

For $x\in D$, we have%
\[
\left\Vert (%
{\displaystyle\int\nolimits_{-\infty}^{\infty}}
b(\lambda)E(d\lambda))x\right\Vert ^{2}=%
{\displaystyle\int\nolimits_{-\infty}^{\infty}}
\left\vert b(\lambda)\right\vert ^{2}\left\Vert E(d\lambda)x\right\Vert ^{2}.
\]
Moreover,%
\begin{equation}
(%
{\displaystyle\int\nolimits_{-\infty}^{\infty}}
b(\lambda)E(d\lambda))^{\ast}=%
{\displaystyle\int\nolimits_{-\infty}^{\infty}}
b(\lambda)E(d\lambda), \label{samosp}%
\end{equation}
i.e. the operator $%
{\displaystyle\int\nolimits_{-\infty}^{\infty}}
b(\lambda)E(d\lambda)$ is self-adjoint.

\begin{remark}
\label{sense}To integrate a Borel measurable function $b:B\rightarrow
\mathbb{R}$ where $B$ is a Borel set containing the support of the measure
$E$, it is sufficient to extend $b$ on $\mathbb{R}$ to a whichever Borel
measurable function (putting, for example, $b(\lambda)=0$ for $\lambda\notin
B$).
\end{remark}

If $b:\mathbb{R}\rightarrow\mathbb{R}$ is Borel measurable and $\sigma
\in\mathcal{B}$, then by the integral
\[%
{\displaystyle\int\nolimits_{\sigma}}
b(\lambda)E(d\lambda)
\]
we mean the integral
\[%
{\displaystyle\int\nolimits_{-\infty}^{\infty}}
\chi_{\sigma}(\lambda)b(\lambda)E(d\lambda)
\]
where $\chi_{\sigma}$ is the characteristic function of the set $\sigma$
(integral $%
{\displaystyle\int\nolimits_{\sigma}}
b(\lambda)E(d\lambda)$ can be also defined with the aid of the restriction of
$E$ to the set $\sigma$).

Next theorem plays the fundamental role in the spectral theory of self-adjoint
operators (below, $\sigma(A)$ denotes the spectrum of an operator
$A:D(A)\subset H\rightarrow H$).

\begin{theorem}
\label{main}If $A:D(A)\subset H\rightarrow H$ is self-adjoint and the
resolvent set $\rho(A)$ is non-empty, then there exists a unique spectral
measure $E$ with the closed support $\Lambda=\sigma(A)$, such that%
\[
A=%
{\displaystyle\int\nolimits_{-\infty}^{\infty}}
\lambda E(d\lambda)=%
{\displaystyle\int\nolimits_{\sigma(A)}}
\lambda E(d\lambda).
\]

\end{theorem}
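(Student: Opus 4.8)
The plan is to reduce Theorem~\ref{main} to the spectral theorem for \emph{bounded} self-adjoint operators, which is contained in the sources cited above and carries over to the real setting, and to use the hypothesis $\rho(A)\neq\emptyset$ precisely to manufacture such a bounded operator. First I would fix $\mu_{0}\in\rho(A)$ and set $B:=(A-\mu_{0}I)^{-1}$. Since $A=A^{\ast}$ and $\mu_{0}$ is real, $B$ is a bounded, self-adjoint, injective operator on $H$ with dense range $D(A)$, and $A=\mu_{0}I+B^{-1}$ on $D(A)=\mathrm{ran}(B)$. The bounded spectral theorem then yields a unique spectral measure $G\colon\mathcal{B}\to\Pi(H)$ with closed support $\sigma(B)$ (a compact subset of $\mathbb{R}$) such that $B=\int_{-\infty}^{\infty}t\,G(dt)$. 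Injectivity of $B$ forces $0$ not to be an eigenvalue, hence $G(\{0\})=0$, so $t\mapsto t^{-1}$ is finite $G$-a.e.

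Next I would transport $G$ to the line through the resolvent substitution. The map $\psi\colon t\mapsto\mu_{0}+t^{-1}$ is defined $G$-a.e.\ on $\sigma(B)$, and (by the spectral mapping theorem for the resolvent) maps $\sigma(B)\setminus\{0\}$ homeomorphically onto $\sigma(A)$; the formula $E(\sigma):=G(\psi^{-1}(\sigma))$ then defines a spectral measure $E\colon\mathcal{B}\to\Pi(H)$. Using the functional calculus for $B$ one has $B^{-1}=\int t^{-1}G(dt)$, whence
\[
A=\mu_{0}I+B^{-1}=\int_{-\infty}^{\infty}\bigl(\mu_{0}+t^{-1}\bigr)G(dt)=\int_{-\infty}^{\infty}\lambda\,E(d\lambda),
\]
the equality being one of operators together with their domains, by the change-of-variables rule for spectral integrals.

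It then remains to identify $\mathrm{supp}(E)$ with $\sigma(A)$ and to prove uniqueness. If $\lambda_{0}\notin\mathrm{supp}(E)$, then $(\lambda-\lambda_{0})^{-1}$ is bounded on $\mathrm{supp}(E)$, so $\int(\lambda-\lambda_{0})^{-1}E(d\lambda)$ is a bounded two-sided inverse of $A-\lambda_{0}I$ and $\lambda_{0}\in\rho(A)$. Conversely, if $\lambda_{0}\in\mathrm{supp}(E)$, then $E\bigl((\lambda_{0}-\varepsilon,\lambda_{0}+\varepsilon)\bigr)\neq 0$ for every $\varepsilon>0$; taking a unit vector $x$ in its range and using $\|(A-\lambda_{0}I)x\|^{2}=\int|\lambda-\lambda_{0}|^{2}\,\|E(d\lambda)x\|^{2}\leq\varepsilon^{2}$ shows $A-\lambda_{0}I$ is not bounded below, so $\lambda_{0}\in\sigma(A)$. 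Since $\mathrm{supp}(E)$ is closed, this gives $\Lambda:=\mathrm{supp}(E)=\sigma(A)$ and hence also $A=\int_{\sigma(A)}\lambda\,E(d\lambda)$. For uniqueness, if $\widetilde{E}$ is another spectral measure with $A=\int\lambda\,\widetilde{E}(d\lambda)$, then $\mu_{0}\notin\mathrm{supp}(\widetilde{E})$ by the inclusion just proved applied to $\widetilde{E}$, so $\int(\lambda-\mu_{0})^{-1}\widetilde{E}(d\lambda)$ is a bounded inverse of $A-\mu_{0}I$, i.e.\ it equals $B$; pushing $\widetilde{E}$ forward by $\lambda\mapsto(\lambda-\mu_{0})^{-1}$ produces a spectral measure representing $B$, which must be $G$ by uniqueness in the bounded case, and therefore $\widetilde{E}=G\circ\psi^{-1}=E$.

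The main obstacle I anticipate is not any single deep idea but the careful bookkeeping of domains in the reduction and in the support computation: one must check that $A-\mu_{0}I=\int(\lambda-\mu_{0})E(d\lambda)$ genuinely has the bounded operator $\int(\lambda-\mu_{0})^{-1}E(d\lambda)$ as a two-sided inverse on $D(A)$, and that the change of variables through $\psi$ preserves not only the integral but the domain of the resulting unbounded operator. Everything else is either a direct computation with the scalar measures $P\mapsto\|E(P)x\|^{2}$ or an appeal to the bounded spectral theorem in its real form.
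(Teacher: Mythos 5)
The paper itself gives no proof of Theorem~\ref{main}: it is stated as the classical spectral representation theorem and imported from \cite{Alex} and \cite{Mlak}, so there is no in-paper argument to measure your proposal against. What you propose is the standard Riesz--von Neumann reduction to the bounded spectral theorem via the resolvent, and it is essentially correct as a proof outline; it is also exactly the kind of argument those references carry out. A few points are worth making explicit. First, in the real-scalar setting of this paper the hypothesis $\rho(A)\neq\emptyset$ is doing real work: one cannot take $\mu_{0}=i$ as in the complex case, so the existence of a real $\mu_{0}\in\rho(A)$ must be assumed, and your use of it is the right one. Second, the observation $G(\{0\})=0$ is not cosmetic --- it is precisely what guarantees $E(\mathbb{R})=G(\psi^{-1}(\mathbb{R}))=G(\mathbb{R}\setminus\{0\})=I$, so the pushforward is a genuine spectral measure in the sense defined in the paper. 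Third, the identity $A=\mu_{0}I+B^{-1}=\int(\mu_{0}+t^{-1})\,G(dt)$ with the correct domain is an application of the sum rule (Proposition~\ref{sum}, with the bounded summand $\mu_{0}$) together with the inversion rule (Proposition~\ref{odwrot} applied to $B$), so the domain bookkeeping you flag as the main obstacle is handled by machinery the paper already records. Your support computation and the uniqueness argument (pushing a competing $\widetilde{E}$ forward to a spectral measure for $B$ and invoking uniqueness in the bounded case) are both standard and sound. In short: the proposal is a correct route to a theorem the paper deliberately leaves unproved.
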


The basic notion in the Stone-von Neumann operator calculus is a function of a
self-adjoint operator. Namely, if $A:D(A)\subset H\rightarrow H$ is
self-adjoint and $E$ is the spectral measure determined according to the above
theorem, then, for any Borel measurable function $b:\mathbb{R}\rightarrow
\mathbb{R}$, one defines the operator $b(A)$ by%
\[
b(A)=%
{\displaystyle\int\nolimits_{-\infty}^{\infty}}
b(\lambda)E(d\lambda)=%
{\displaystyle\int\nolimits_{\sigma(A)}}
b(\lambda)E(d\lambda).
\]
It is known that the spectrum $\sigma(b(A))$ of $b(A)$ is given by%
\begin{equation}
\sigma(b(A))=\overline{b(\sigma(A))} \label{ow}%
\end{equation}
provided that $b$ is continuous (it is sufficient to assume that $b$ is
continuous on $\sigma(A)$).

We have the following general results.

\begin{proposition}
\label{sum}If $b,d:\mathbb{R}\rightarrow\mathbb{R}$ are Borel measurable
functions and $E$ is the spectral measure for a self-adjoint operator
$A:D(A)\subset H\rightarrow H$ with non-empty resolvent set, then%
\[
(b+d)(A)\supset b(A)+d(A)
\]
and%
\[
(b+d)(A)=b(A)+d(A)
\]
if and only if%
\[
D((b+d)(A))\subset D(d(A))
\]
or, if and only if%
\[
D((b+d)(A))\subset D(b(A)).
\]

\end{proposition}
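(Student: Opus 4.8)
The plan is to reduce the whole statement to one structural fact about the construction recalled above: for a fixed $x\in H$, writing $\mu_{x}$ for the (finite, nonnegative) measure $\mathcal{B}\ni P\longmapsto\left\Vert E(P)x\right\Vert ^{2}$, one has $x\in D(b(A))$ if and only if $b\in L^{2}(\mathbb{R},\mu_{x})$ (this is condition (\ref{fm})), and on that set the assignment $g\longmapsto\int_{-\infty}^{\infty}g(\lambda)E(d\lambda)x$ is \emph{linear} in $g\in L^{2}(\mathbb{R},\mu_{x})$. Indeed this assignment is the linear isometry from $L^{2}(\mathbb{R},\mu_{x})$ into $H$ that extends, from simple functions, the obvious finite linear combinations of the vectors $E(P)x$, and the limit of the truncations $b_{n}$ used in the definition is nothing but the value of this isometry at $b$. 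I would record this remark first, noting that it is immediate from the references \cite{Alex}, \cite{Mlak} and from the norm identity $\left\Vert \int_{-\infty}^{\infty}b(\lambda)E(d\lambda)x\right\Vert ^{2}=\int_{-\infty}^{\infty}\left\vert b(\lambda)\right\vert ^{2}\left\Vert E(d\lambda)x\right\Vert ^{2}$.

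Next I would prove $(b+d)(A)\supset b(A)+d(A)$. Since $D(b(A)+d(A))=D(b(A))\cap D(d(A))$, take $x$ in this intersection; then $b,d\in L^{2}(\mu_{x})$, hence $b+d\in L^{2}(\mu_{x})$ because $L^{2}(\mu_{x})$ is a vector space (concretely $\left\vert b+d\right\vert ^{2}\leq 2\left\vert b\right\vert ^{2}+2\left\vert d\right\vert ^{2}$ $\mu_{x}$-a.e.), so $x\in D((b+d)(A))$; this gives $D(b(A)+d(A))\subset D((b+d)(A))$. For such $x$, linearity of $g\mapsto\int_{-\infty}^{\infty}g(\lambda)E(d\lambda)x$ yields $(b+d)(A)x=\int_{-\infty}^{\infty}b(\lambda)E(d\lambda)x+\int_{-\infty}^{\infty}d(\lambda)E(d\lambda)x=b(A)x+d(A)x$, which is precisely the asserted inclusion of operators.

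Then I would handle the equivalences, which are now pure domain bookkeeping. If $(b+d)(A)=b(A)+d(A)$, then $D((b+d)(A))=D(b(A))\cap D(d(A))$, which is contained both in $D(b(A))$ and in $D(d(A))$; this gives both "only if" implications. Conversely, assume $D((b+d)(A))\subset D(d(A))$ (the case $D((b+d)(A))\subset D(b(A))$ is symmetric, with the roles of $b$ and $d$ interchanged). For $x\in D((b+d)(A))$ we have $b+d\in L^{2}(\mu_{x})$ and, by hypothesis, $d\in L^{2}(\mu_{x})$, hence $b=(b+d)-d\in L^{2}(\mu_{x})$, i.e. $x\in D(b(A))$; thus $D((b+d)(A))\subset D(b(A))\cap D(d(A))=D(b(A)+d(A))$. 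Combined with the inclusion already proved, this gives $D((b+d)(A))=D(b(A)+d(A))$, and since the two operators were shown to agree on that common domain, $(b+d)(A)=b(A)+d(A)$.

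The whole argument rests on the first structural remark; everything after it is algebra of domains. The only point needing care is to state the linearity of $g\mapsto\int_{-\infty}^{\infty}g(\lambda)E(d\lambda)x$ on $L^{2}(\mathbb{R},\mu_{x})$ in a way that matches the truncation definition adopted in the text, together with the conceptual reason one of the extra containments must be assumed: $b+d$ can belong to $L^{2}(\mu_{x})$ even when neither $b$ nor $d$ does, because of cancellation between them on $\sigma(A)$, so in general $D((b+d)(A))$ is strictly larger than $D(b(A))\cap D(d(A))$. I do not anticipate any genuine obstacle beyond this.
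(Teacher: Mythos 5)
The paper does not prove this proposition: it is quoted as a known result of the Stone--von Neumann calculus, imported from \cite{Alex} and \cite{Mlak}, so there is no in-paper argument to compare against. Your proof is correct and is the standard one: the characterization $x\in D(b(A))\Leftrightarrow b\in L^{2}(\mu_{x})$ with $\mu_{x}(P)=\left\Vert E(P)x\right\Vert ^{2}$ is exactly condition (\ref{fm}), linearity of $g\longmapsto\int_{-\infty}^{\infty}g(\lambda)E(d\lambda)x$ on $L^{2}(\mu_{x})$ follows from the isometry property together with the $L^{2}(\mu_{x})$-convergence of the truncations $b_{n}\rightarrow b$ (so the truncation-limit definition agrees with the continuous linear extension from bounded functions, which is the one point you rightly flag as needing care), and the rest is the domain bookkeeping you describe, including the cancellation remark explaining why $D((b+d)(A))$ can strictly contain $D(b(A))\cap D(d(A))$.
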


\begin{proposition}
\label{zloz}If $b,d:\mathbb{R}\rightarrow\mathbb{R}$ are Borel measurable
functions and $E$ is the spectral measure for a self-adjoint operator
$A:D(A)\subset H\rightarrow H$ with non-empty resolvent set, then
\[
(b\cdot d)(A)\supset b(A)\circ d(A)
\]
and%
\begin{equation}
(b\cdot d)(A)=b(A)\circ d(A) \label{rowbd}%
\end{equation}
if and only if%
\[
D((b\cdot d)(A))\subset D(d(A)).
\]

\end{proposition}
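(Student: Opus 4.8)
The whole argument can be carried out through the finite nonnegative scalar measures $\mu_{x}(P)=\left\Vert E(P)x\right\Vert ^{2}$, $P\in\mathcal{B}$, using the description of the domain of a spectral integral recalled above, namely $D(c(A))=\{x\in H:\int_{-\infty}^{\infty}\left\vert c(\lambda)\right\vert ^{2}\mu_{x}(d\lambda)<\infty\}$ together with $\left\Vert c(A)x\right\Vert ^{2}=\int_{-\infty}^{\infty}\left\vert c(\lambda)\right\vert ^{2}\mu_{x}(d\lambda)$ on that domain, valid for any Borel measurable $c:\mathbb{R}\rightarrow\mathbb{R}$.

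First I would establish, for every $x\in D(d(A))$ and every $P\in\mathcal{B}$, the pushforward-type identity
\[
E(P)\big(d(A)x\big)=\int_{P}d(\lambda)E(d\lambda)x ,
\]
which follows by writing $d$ as the $\mu_{x}$-limit of its truncations $d_{n}$, checking the identity for simple functions by means of the multiplicativity axiom $E(P\cap Q)=E(P)\circ E(Q)$, passing to bounded functions by continuity of the bounded operator $E(P)$, and finally letting $n\rightarrow\infty$ by dominated convergence (legitimate precisely because $\int_{-\infty}^{\infty}\left\vert d\right\vert ^{2}\mu_{x}(d\lambda)<\infty$). Taking squared norms then gives the change-of-measure formula $\mu_{d(A)x}(P)=\int_{P}\left\vert d(\lambda)\right\vert ^{2}\mu_{x}(d\lambda)$, i.e. $d\mu_{d(A)x}=\left\vert d\right\vert ^{2}d\mu_{x}$.

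Using this, the domain of the composition is computed directly: $x\in D(b(A)\circ d(A))$ means $x\in D(d(A))$ and $d(A)x\in D(b(A))$, and the latter condition reads $\int_{-\infty}^{\infty}\left\vert b\right\vert ^{2}d\mu_{d(A)x}=\int_{-\infty}^{\infty}\left\vert b(\lambda)d(\lambda)\right\vert ^{2}\mu_{x}(d\lambda)<\infty$, that is, $x\in D((b\cdot d)(A))$. Hence $D(b(A)\circ d(A))=D(d(A))\cap D((b\cdot d)(A))$. On this common domain, the same truncation and dominated-convergence scheme applied to $b$ yields $b(A)(d(A)x)=\lim_{n}\int_{-\infty}^{\infty}b_{n}(\lambda)E(d\lambda)(d(A)x)=\lim_{n}\int_{-\infty}^{\infty}b_{n}(\lambda)d(\lambda)E(d\lambda)x=(b\cdot d)(A)x$, the last limit being justified by $\left\vert b_{n}d\right\vert \leq\left\vert bd\right\vert $ and $x\in D((b\cdot d)(A))$. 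This proves $(b\cdot d)(A)\supset b(A)\circ d(A)$; and since two operators, one of which extends the other, coincide exactly when their domains coincide, equality holds if and only if $D((b\cdot d)(A))\subset D(d(A))\cap D((b\cdot d)(A))$, i.e. if and only if $D((b\cdot d)(A))\subset D(d(A))$.

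The main obstacle is the careful justification of the interchanges of limits in the two displayed limiting arguments: one has to control simultaneously two distinct vector measures, $P\mapsto E(P)x$ and $P\mapsto E(P)(d(A)x)$, and check that the truncated integrands converge in the appropriate $L^{2}(\mu_{x})$-sense, which is exactly where the hypotheses $x\in D(d(A))$ and $x\in D((b\cdot d)(A))$ are used. Everything else reduces to routine bookkeeping with the scalar measures $\mu_{x}$.
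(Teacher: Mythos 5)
Your argument is correct: the change-of-measure identity $\mu_{d(A)x}(P)=\int_{P}\left\vert d(\lambda)\right\vert ^{2}\mu_{x}(d\lambda)$, the resulting computation $D(b(A)\circ d(A))=D(d(A))\cap D((b\cdot d)(A))$, and the truncation/dominated-convergence passage to $b(A)(d(A)x)=(b\cdot d)(A)x$ together give exactly the stated inclusion and the domain criterion for equality. The paper itself offers no proof of this proposition --- it is quoted from the cited references (Alexiewicz, Mlak) --- and your proof is the standard argument given there, so there is nothing to correct.
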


Using the above propositions one can deduce

\begin{proposition}
If $E$ is the spectral measure for a self-adjoint operator $A:D(A)\subset
H\rightarrow H$ with non-empty resolvent set, then for any $n\in\mathbb{N}$,
$n\geq2$, and a Borel measurable function $b:\mathbb{R}\rightarrow\mathbb{R}$,
one has%
\begin{equation}
(b(A))^{n}=b^{n}(A). \label{formulan}%
\end{equation}

\end{proposition}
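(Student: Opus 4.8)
The plan is to argue by induction on $n\geq 2$, using Proposition \ref{zloz} at each step to convert the composition $b(A)\circ b(A)\circ\cdots\circ b(A)$ into the function $b^{n}$ applied to $A$. Throughout, the key elementary fact is that for each $x\in H$ the set function $\mu_{x}:\mathcal{B}\ni P\mapsto\left\Vert E(P)x\right\Vert ^{2}$ is a \emph{finite} nonnegative measure, since $\mu_{x}(\mathbb{R})=\left\Vert E(\mathbb{R})x\right\Vert ^{2}=\left\Vert x\right\Vert ^{2}$ by $E(\mathbb{R})=I$. Recall also that $D(b^{m}(A))=\{x\in H:\int_{-\infty}^{\infty}\left\vert b(\lambda)\right\vert ^{2m}\,\mu_{x}(d\lambda)<\infty\}$ for every $m\in\mathbb{N}$.

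For the base case $n=2$, I would apply Proposition \ref{zloz} with $d=b$, which gives $b^{2}(A)=(b\cdot b)(A)\supset b(A)\circ b(A)$, with equality as soon as $D(b^{2}(A))\subset D(b(A))$. To verify this inclusion, fix $x\in D(b^{2}(A))$ and split $\mathbb{R}$ into $\{\left\vert b\right\vert \leq 1\}$ and $\{\left\vert b\right\vert >1\}$: on the first set $\left\vert b\right\vert ^{2}\leq 1$, which is $\mu_{x}$-integrable because $\mu_{x}$ is finite; on the second set $\left\vert b\right\vert ^{2}\leq\left\vert b\right\vert ^{4}$, which is $\mu_{x}$-integrable because $x\in D(b^{2}(A))$. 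Hence $\int\left\vert b\right\vert ^{2}\mu_{x}(d\lambda)\leq\left\Vert x\right\Vert ^{2}+\int\left\vert b\right\vert ^{4}\mu_{x}(d\lambda)<\infty$, so $x\in D(b(A))$ and $(b(A))^{2}=b^{2}(A)$.

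For the inductive step, assume $(b(A))^{n-1}=b^{n-1}(A)$. Then $(b(A))^{n}=b(A)\circ(b(A))^{n-1}=b(A)\circ b^{n-1}(A)$, and since $b\cdot b^{n-1}=b^{n}$ pointwise (and $b^{n-1}$, $b^{n}$ are Borel measurable), Proposition \ref{zloz} with $d=b^{n-1}$ yields $b^{n}(A)\supset b(A)\circ b^{n-1}(A)$, with equality provided $D(b^{n}(A))\subset D(b^{n-1}(A))$. This inclusion is obtained exactly as in the base case: for $x\in D(b^{n}(A))$ one has $\left\vert b\right\vert ^{2(n-1)}\leq 1$ where $\left\vert b\right\vert \leq 1$ and $\left\vert b\right\vert ^{2(n-1)}\leq\left\vert b\right\vert ^{2n}$ where $\left\vert b\right\vert >1$, so $\int\left\vert b\right\vert ^{2(n-1)}\mu_{x}(d\lambda)\leq\left\Vert x\right\Vert ^{2}+\int\left\vert b\right\vert ^{2n}\mu_{x}(d\lambda)<\infty$. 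Thus $(b(A))^{n}=b^{n}(A)$, which completes the induction.

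The only genuine obstacle is the domain inclusion $D(b^{n}(A))\subset D(b^{n-1}(A))$ needed to upgrade the operator inclusion in Proposition \ref{zloz} to an equality; the rest is bookkeeping about composition of possibly unbounded operators. That inclusion is not automatic for arbitrary measures, but it holds here precisely because $\mu_{x}$ is finite, which is what allows the part of the integral over $\{\left\vert b\right\vert \leq 1\}$ to be bounded by the constant $\left\Vert x\right\Vert ^{2}$.
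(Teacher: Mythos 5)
Your proof is correct and follows the route the paper intends: the paper offers no written proof, saying only that the result follows ``using the above propositions,'' and your induction via Proposition \ref{zloz} is exactly that, with the domain inclusion $D(b^{n}(A))\subset D(b^{n-1}(A))$ (the one nontrivial point, which you rightly isolate) verified by splitting the integral over $\{\left\vert b\right\vert \leq1\}$ and $\{\left\vert b\right\vert >1\}$ and using finiteness of $\mu_{x}$. This is the same argument the paper itself uses later to prove the analogous inclusion (\ref{beta1beta22}), so your write-up supplies precisely the details the paper leaves implicit.
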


When $b(\lambda)=\lambda$, (\ref{formulan}) gives%
\begin{equation}
A^{n}=%
{\displaystyle\int\nolimits_{-\infty}^{\infty}}
\lambda^{n}E(d\lambda) \label{formulann}%
\end{equation}
(if $n=1$, then (\ref{formulann}) follows from Theorem \ref{main}). Since
$E(\mathbb{R})=I$, therefore the identity operator $I$ can be written as%
\[
I=%
{\displaystyle\int\nolimits_{-\infty}^{\infty}}
1E(d\lambda).
\]
More generally,

\begin{proposition}
\label{wielomn}If $E$ is the spectral measure for a self-adjoint operator
$A:D(A)\subset H\rightarrow H$ with non-empty resolvent set, then%
\[
\alpha_{k}A^{k}+...+\alpha_{1}A+\alpha_{0}I=%
{\displaystyle\int\nolimits_{-\infty}^{\infty}}
(\alpha_{k}\lambda^{k}+...+\alpha_{1}\lambda^{1}+\alpha_{0})E(d\lambda).
\]

\end{proposition}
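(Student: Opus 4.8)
The plan is to prove the identity as a pair of operator inclusions: that the right-hand side extends the left-hand side, which is immediate from the additivity statement in Proposition \ref{sum}, and that the two operators share the same domain, which is the only point needing real work. Write $p(\lambda)=\alpha_{k}\lambda^{k}+\cdots+\alpha_{1}\lambda+\alpha_{0}$ and, for $0\le j\le k$, $p_{j}(\lambda)=\alpha_{j}\lambda^{j}+\cdots+\alpha_{0}$. Since a scalar can be pulled out of the spectral integral in its integrand (equivalently, apply Proposition \ref{zloz} to the constant function $\alpha_{i}$, whose operator $\alpha_{i}I$ is everywhere defined), one has $(\alpha_{i}\lambda^{i})(A)=\alpha_{i}A^{i}$, with the convention $A^{0}=I$; and by (\ref{formulann}), $A^{i}=\int_{-\infty}^{\infty}\lambda^{i}E(d\lambda)$. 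Applying Proposition \ref{sum} inductively to $p_{j}=p_{j-1}+\alpha_{j}\lambda^{j}$ gives $p_{j}(A)\supset p_{j-1}(A)+\alpha_{j}A^{j}$, whence for $j=k$
\[
\int_{-\infty}^{\infty}\bigl(\alpha_{k}\lambda^{k}+\cdots+\alpha_{1}\lambda+\alpha_{0}\bigr)E(d\lambda)=p(A)\supset\alpha_{k}A^{k}+\cdots+\alpha_{1}A+\alpha_{0}I .
\]

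For the converse inclusion, by Proposition \ref{sum} it suffices to verify the domain inclusion $D(p(A))\subset D(\alpha_{k}A^{k}+\cdots+\alpha_{0}I)$. We may assume $\alpha_{k}\neq 0$, since otherwise we simply decrease $k$. Using the domain characterization (\ref{fm}), $D(A^{i})=\{x\in H:\int_{-\infty}^{\infty}|\lambda|^{2i}\|E(d\lambda)x\|^{2}<\infty\}$, and from the elementary bound $|\lambda|^{2m}\le 1+|\lambda|^{2n}$ valid for $m\le n$ one obtains the chain $D(A^{k})\subset D(A^{k-1})\subset\cdots\subset D(A^{0})=H$; consequently the domain of the sum $\alpha_{k}A^{k}+\cdots+\alpha_{0}I$ is exactly $D(A^{k})$. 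It thus remains to show $D(p(A))\subset D(A^{k})$. Because $\alpha_{k}\neq 0$, there are constants $c>0$ and $R>0$ with $|p(\lambda)|\ge c|\lambda|^{k}$ for $|\lambda|\ge R$; hence for $x\in D(p(A))$,
\[
\int_{|\lambda|\ge R}|\lambda|^{2k}\|E(d\lambda)x\|^{2}\le\frac{1}{c^{2}}\int_{|\lambda|\ge R}|p(\lambda)|^{2}\|E(d\lambda)x\|^{2}<\infty ,
\]
while $\int_{|\lambda|<R}|\lambda|^{2k}\|E(d\lambda)x\|^{2}\le R^{2k}\|x\|^{2}<\infty$; adding the two pieces gives $x\in D(A^{k})$, as required. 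Combining this with the inclusion of the previous paragraph proves the equality.

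The one genuinely substantive step is the domain inclusion $D(p(A))\subset D(A^{k})$; everything else is bookkeeping with Propositions \ref{sum} and \ref{zloz} and formula (\ref{formulann}). I expect the only place where care is needed is precisely this estimate: one must compare $|p(\lambda)|$ with $|\lambda|^{k}$ at infinity (which is where $\alpha_{k}\neq 0$ is used) and combine this with the nesting $D(A^{k})\subset\cdots\subset H$, rather than trying to bound $|p(\lambda)|$ from below by each monomial $|\lambda|^{i}$ separately, which is false.
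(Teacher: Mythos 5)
Your proof is correct. The paper itself gives no proof of Proposition \ref{wielomn} --- it is stated as an immediate ``more generally'' after formula (\ref{formulann}) and the identity $I=\int 1\,E(d\lambda)$ --- so your argument is precisely the fleshing-out of what the author leaves implicit: the trivial inclusion $p(A)\supset\sum_i\alpha_iA^i$ from Proposition \ref{sum}, plus the one substantive point, namely the reverse domain inclusion $D(p(A))\subset D(A^k)$, which you obtain correctly from the lower bound $|p(\lambda)|\ge c|\lambda|^k$ for $|\lambda|\ge R$ together with the nesting $D(A^k)\subset\cdots\subset D(A^0)=H$. The only caveat worth recording is the degenerate case $\alpha_k=0$: under the usual convention that $0\cdot A^k$ is the zero operator \emph{on} $D(A^k)$, the stated identity is literally false when the leading coefficient vanishes (the spectral integral then has a strictly larger domain), so your reduction ``decrease $k$'' is really the statement that the proposition should be read with $\alpha_k\neq0$ or with vanishing terms omitted; this is a convention issue, not a gap in your argument.
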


Let $\beta>0$ and $\sigma(A)\subset\lbrack0,\infty)$. According to the Remark
\ref{sense} by $A^{\beta}$ we mean the operator%
\[
A^{\beta}=%
{\displaystyle\int\nolimits_{-\infty}^{\infty}}
b(\lambda)E(d\lambda)
\]
where
\[
b:\mathbb{R}\ni\lambda\rightarrow\left\{
\begin{array}
[c]{ccc}%
0 & ; & \lambda<0\\
\lambda^{\beta} & ; & \lambda\geq0
\end{array}
\right.  .
\]
Using Proposition \ref{zloz} one can prove

\begin{proposition}
\label{potpot}If $E$ is the spectral measure for a self-adjoint operator
$A:D(A)\subset H\rightarrow H$ with $\sigma(A)\subset\lbrack0,\infty)$, then%
\begin{equation}
A^{\beta_{2}}\circ A^{\beta_{1}}=A^{\beta_{2}+\beta_{1}} \label{beta1beta2}%
\end{equation}
for $\beta_{2}$, $\beta_{1}>0$.
\end{proposition}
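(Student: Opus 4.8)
The plan is to apply Proposition \ref{zloz} to the pair of Borel measurable functions $b,d:\mathbb{R}\rightarrow\mathbb{R}$ defined by $b(\lambda)=\lambda^{\beta_{2}}$, $d(\lambda)=\lambda^{\beta_{1}}$ for $\lambda\geq0$ and $b(\lambda)=d(\lambda)=0$ for $\lambda<0$ (this is exactly the convention under which $A^{\beta_{1}}$ and $A^{\beta_{2}}$ were defined above). For such $b,d$ the product $b\cdot d$ is the function equal to $\lambda^{\beta_{1}+\beta_{2}}$ on $[0,\infty)$ and to $0$ on $(-\infty,0)$, which is precisely the function defining $A^{\beta_{1}+\beta_{2}}$; hence $(b\cdot d)(A)=A^{\beta_{1}+\beta_{2}}$, while by definition $b(A)\circ d(A)=A^{\beta_{2}}\circ A^{\beta_{1}}$. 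So the asserted identity (\ref{beta1beta2}) is exactly the equality in Proposition \ref{zloz} for this pair, and the whole argument reduces to checking the domain inclusion $D((b\cdot d)(A))\subset D(d(A))$, i.e. $D(A^{\beta_{1}+\beta_{2}})\subset D(A^{\beta_{1}})$.

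To verify this inclusion, I would first note that since $\sigma(A)\subset[0,\infty)$ the open set $(-\infty,0)$ is disjoint from the support of $E$ and therefore carries zero spectral measure, so for every $\gamma>0$ the membership criterion (\ref{fm}) for $D(A^{\gamma})$ takes the form $\int_{[0,\infty)}\lambda^{2\gamma}\left\Vert E(d\lambda)x\right\Vert ^{2}<\infty$. Now fix $x\in D(A^{\beta_{1}+\beta_{2}})$, so that $\int_{[0,\infty)}\lambda^{2(\beta_{1}+\beta_{2})}\left\Vert E(d\lambda)x\right\Vert ^{2}<\infty$, and split the defining integral for $D(A^{\beta_{1}})$ over $[0,1)$ and $[1,\infty)$. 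On $[0,1)$ one has $\lambda^{2\beta_{1}}\leq1$, hence $\int_{[0,1)}\lambda^{2\beta_{1}}\left\Vert E(d\lambda)x\right\Vert ^{2}\leq\left\Vert E([0,1))x\right\Vert ^{2}\leq\left\Vert x\right\Vert ^{2}<\infty$; on $[1,\infty)$, since $\beta_{1}<\beta_{1}+\beta_{2}$ (here the hypothesis $\beta_{2}>0$ is used), one has $\lambda^{2\beta_{1}}\leq\lambda^{2(\beta_{1}+\beta_{2})}$, hence $\int_{[1,\infty)}\lambda^{2\beta_{1}}\left\Vert E(d\lambda)x\right\Vert ^{2}\leq\int_{[1,\infty)}\lambda^{2(\beta_{1}+\beta_{2})}\left\Vert E(d\lambda)x\right\Vert ^{2}<\infty$. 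Adding the two pieces shows $x\in D(A^{\beta_{1}})$, which is the required inclusion, and Proposition \ref{zloz} then gives $A^{\beta_{2}}\circ A^{\beta_{1}}=A^{\beta_{1}+\beta_{2}}$.

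The only genuinely delicate point is this domain inclusion, and it is precisely where the hypothesis $\sigma(A)\subset[0,\infty)$ enters: it guarantees both that the spectral variable $\lambda$ is nonnegative (so that $\lambda^{2\gamma}$ is unambiguous and the estimate $\lambda^{2\beta_{1}}\leq\lambda^{2(\beta_{1}+\beta_{2})}$ for $\lambda\geq1$ is available) and that $(-\infty,0)$ carries no spectral mass. Everything else is bookkeeping with Proposition \ref{zloz}. I note finally that, since $b\cdot d=d\cdot b$, the same argument with the roles of $\beta_{1}$ and $\beta_{2}$ interchanged also yields $A^{\beta_{1}}\circ A^{\beta_{2}}=A^{\beta_{1}+\beta_{2}}$, so the composition of these powers is commutative, consistent with (\ref{beta1beta2}).
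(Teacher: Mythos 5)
Your proof is correct and follows exactly the route the paper intends: the paper gives no details beyond the remark ``Using Proposition \ref{zloz} one can prove,'' and you supply precisely the missing verification, namely the domain inclusion $D(A^{\beta_{1}+\beta_{2}})\subset D(A^{\beta_{1}})$, by the same split of the spectral integral over $[0,1)$ and $[1,\infty)$ that the paper itself uses later to establish the inclusion (\ref{beta1beta22}). Nothing further is needed.
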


\begin{proposition}
\label{wielomf}If $E$ is the spectral measure for a self-adjoint operator
$A:D(A)\subset H\rightarrow H$ with $\sigma(A)\subset\lbrack0,\infty)$, then%
\[
\alpha_{k}A^{\beta_{k}}+...+\alpha_{1}A^{\beta_{1}}+\alpha_{0}A^{\beta_{0}}=%
{\displaystyle\int\nolimits_{-\infty}^{\infty}}
w(\lambda)E(d\lambda).
\]
where%
\[
w:\mathbb{R}\ni\lambda\rightarrow\left\{
\begin{array}
[c]{ccc}%
0 & ; & \lambda<0\\
\alpha_{k}\lambda^{\beta_{k}}+...+\alpha_{1}\lambda^{\beta_{1}}+\alpha
_{0}\lambda^{\beta_{0}} & ; & \lambda\geq0
\end{array}
\right.
\]
and $0\leq\beta_{0}<\beta_{1}<...<\beta_{k}.$
\end{proposition}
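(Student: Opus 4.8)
The plan is to argue by induction on $k$, with the whole argument resting on Proposition \ref{sum} together with the characterisation of the domain of $b(A)$ by condition (\ref{fm}). First I would introduce, for $i=0,\dots,k$, the Borel function
\[
b_{i}:\mathbb{R}\ni\lambda\longmapsto\left\{
\begin{array}{ll}
0 & \text{for }\lambda<0,\\
\alpha_{i}\lambda^{\beta_{i}} & \text{for }\lambda\geq0,
\end{array}
\right.
\]
so that $\alpha_{i}A^{\beta_{i}}=b_{i}(A)$. This last identity uses only the definition of $A^{\beta_{i}}$ and the homogeneity of the spectral integral, which itself follows from Proposition \ref{zloz} applied with the constant function $\alpha_{i}$ in the role of one factor (its image under the calculus is $\alpha_{i}I$, whose domain is all of $H$, so the required domain inclusion is automatic). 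Writing $w_{m}=b_{0}+\dots+b_{m}$ for $m=0,\dots,k$, one has $w_{k}=w$, and each $w_{m}$ is exactly the function from the statement with $k$ replaced by $m$; the goal is the equality $\sum_{i=0}^{m}\alpha_{i}A^{\beta_{i}}=w_{m}(A)$ for $m=k$.

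For $m=0$ the assertion is just $\alpha_{0}A^{\beta_{0}}=b_{0}(A)=w_{0}(A)$, i.e. the definition recalled above. For the inductive step I would assume $\sum_{i=0}^{m-1}\alpha_{i}A^{\beta_{i}}=w_{m-1}(A)$, write $\sum_{i=0}^{m}\alpha_{i}A^{\beta_{i}}=w_{m-1}(A)+b_{m}(A)$, and apply Proposition \ref{sum} to the pair $w_{m-1},b_{m}$, whose sum is $w_{m}$. This gives $w_{m-1}(A)+b_{m}(A)\subset w_{m}(A)$, with equality precisely when $D(w_{m}(A))\subset D(b_{m}(A))$.

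The one step that requires a genuine (if small) argument — and the place I expect the real content to sit — is verifying this domain inclusion. Here I would use that the support of $E$ equals $\sigma(A)\subset[0,\infty)$, so for every $x$ the vector measure $E(d\lambda)x$ is concentrated on $[0,\infty)$, and that there, since \emph{all} the coefficients $\alpha_{i}$ are positive, each summand $\alpha_{i}\lambda^{\beta_{i}}$ is nonnegative; hence $|b_{m}(\lambda)|=\alpha_{m}\lambda^{\beta_{m}}\leq\sum_{i=0}^{m}\alpha_{i}\lambda^{\beta_{i}}=w_{m}(\lambda)=|w_{m}(\lambda)|$ on the support of $E$. Comparing the two integrals in (\ref{fm}) then yields $D(w_{m}(A))\subset D(b_{m}(A))$, so the inclusion above is an equality and the induction closes at $m=k$. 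It is worth stressing that the positivity of the $\alpha_{i}$ and the hypothesis $\sigma(A)\subset[0,\infty)$ are exactly what make the pointwise domination $|b_{m}|\leq|w_{m}|$ valid; without them one would in general retain only the inclusion $\sum_{i}\alpha_{i}A^{\beta_{i}}\subset w(A)$ coming from Proposition \ref{sum}.
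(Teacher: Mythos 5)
Your argument is correct, and it is worth noting that the paper itself states Proposition \ref{wielomf} without any proof (the surrounding text only hints that it follows from Propositions \ref{sum} and \ref{zloz}), so your write-up supplies exactly the missing content in the way the author evidently intends: induction on the number of terms, Proposition \ref{sum} giving the inclusion $w_{m-1}(A)+b_{m}(A)\subset w_{m}(A)$, and the domain criterion (\ref{fm}) upgrading it to an equality. Your identification of the domain inclusion $D(w_{m}(A))\subset D(b_{m}(A))$ as the only step with real content is accurate, and the pointwise domination $|b_{m}(\lambda)|\leq|w_{m}(\lambda)|$ on $\sigma(A)\subset[0,\infty)$ settles it. The small imprecision in your justification of $\alpha_{i}A^{\beta_{i}}=b_{i}(A)$ is harmless: the relevant condition from Proposition \ref{zloz} is $D((b\cdot d)(A))\subset D(d(A))$ with $d(\lambda)=\lambda^{\beta_{i}}$, and this holds because $\alpha_{i}\neq0$ forces the two domains in (\ref{fm}) to coincide, not because $\alpha_{i}I$ is everywhere defined.

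One caveat: as stated, the proposition does not restrict the signs of the $\alpha_{i}$ (elsewhere the paper takes $\alpha_{i}\in\mathbb{R}$ in Section 4 and $\alpha_{i}>0$ only from Section 5 on), and your domination argument genuinely needs all coefficients nonnegative, as you yourself point out. For mixed signs the conclusion still holds provided $\alpha_{k}\neq0$, but the domain inclusion must then be obtained from the asymptotics $|w(\lambda)|\geq\frac{1}{2}|\alpha_{k}|\lambda^{\beta_{k}}$ for $\lambda$ large together with boundedness of all the $\lambda^{\beta_{i}}$ on the remaining compact part of $[0,\infty)$, which again yields $D(w(A))=D(A^{\beta_{k}})\subset D(b_{m}(A))$ via (\ref{beta1beta22}). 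Since every application in the paper assumes $\alpha_{i}>0$, your proof covers the case that is actually used.
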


In the next we shall also use

\begin{proposition}
\label{odwrot}If $E$ is the spectral measure for a self-adjoint operator
$A:D(A)\subset H\rightarrow H$ with non-empty resolvent set and $b:\mathbb{R}%
\rightarrow\mathbb{R}$ is a Borel measurable function such that $b(\lambda
)\neq0$ a.e. with respect to $E$, then there exists the inverse operator
$[b(A)]^{-1}$ and%
\[
\lbrack b(A)]^{-1}=%
{\displaystyle\int\nolimits_{-\infty}^{\infty}}
\frac{1}{b(\lambda)}E(d\lambda).
\]

\end{proposition}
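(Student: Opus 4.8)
The plan is to exhibit the inverse explicitly. Define $d\colon\mathbb{R}\to\mathbb{R}$ by $d(\lambda)=1/b(\lambda)$ on $\{b\neq0\}$ and $d(\lambda)=0$ elsewhere; this is Borel measurable, and since $b\neq0$ a.e.\ with respect to $E$, both products $b\cdot d$ and $d\cdot b$ coincide $E$-a.e.\ with the constant function $1$, so by the convention of Remark~\ref{sense} one has $(b\cdot d)(A)=(d\cdot b)(A)=1(A)=I$. The whole argument then rests on Proposition~\ref{zloz}, applied in both orders, together with one pushforward-measure identity. First I would record that identity: for a Borel function $g$ and $x\in D(g(A))$, the scalar measure $\sigma\mapsto\Vert E(\sigma)(g(A)x)\Vert^{2}$ equals $\sigma\mapsto\int_{\sigma}|g(\lambda)|^{2}\Vert E(d\lambda)x\Vert^{2}$. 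Indeed $\chi_{\sigma}(A)=E(\sigma)$, so Proposition~\ref{zloz} gives $E(\sigma)\circ g(A)\subset(\chi_{\sigma}\cdot g)(A)$, hence $E(\sigma)(g(A)x)=(\chi_{\sigma}\cdot g)(A)x$, whose squared norm is $\int|\chi_{\sigma}(\lambda)g(\lambda)|^{2}\Vert E(d\lambda)x\Vert^{2}=\int_{\sigma}|g(\lambda)|^{2}\Vert E(d\lambda)x\Vert^{2}$ by the norm formula recalled in Subsection~2.1.

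The consequence I need is that, writing $z=g(A)x$, one has $\int|h(\lambda)|^{2}\Vert E(d\lambda)z\Vert^{2}=\int|h(\lambda)g(\lambda)|^{2}\Vert E(d\lambda)x\Vert^{2}$ for every Borel $h$. Taking $(g,h)=(d,b)$ and using $bd=1$ $E$-a.e.\ together with $\int 1\,\Vert E(d\lambda)x\Vert^{2}=\Vert x\Vert^{2}$, this shows that $d(A)y\in D(b(A))$ for every $y\in D(d(A))$, with $\int|b(\lambda)|^{2}\Vert E(d\lambda)d(A)y\Vert^{2}=\Vert y\Vert^{2}$; symmetrically, $b(A)x\in D(d(A))$ for every $x\in D(b(A))$. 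In particular $D(b(A)\circ d(A))=D(d(A))$.

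Now I would conclude. By Proposition~\ref{zloz}, $d(A)\circ b(A)\subset(d\cdot b)(A)=I$; since $0\in D(d(A))$, any $x$ with $b(A)x=0$ lies in $D(d(A)\circ b(A))$, so $x=d(A)(b(A)x)=d(A)(0)=0$, i.e.\ $b(A)$ is injective and $[b(A)]^{-1}$ exists on $\operatorname{Range}(b(A))$. Likewise $b(A)\circ d(A)\subset(b\cdot d)(A)=I$, and since $D(b(A)\circ d(A))=D(d(A))$ we get $b(A)(d(A)y)=y$ for all $y\in D(d(A))$; hence $D(d(A))\subset\operatorname{Range}(b(A))$, while conversely every $y=b(A)x$ lies in $D(d(A))$, so $\operatorname{Range}(b(A))=D(d(A))$. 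Finally, for $y\in D(d(A))$ the element $x=d(A)y$ satisfies $x\in D(b(A))$ and $b(A)x=y$, and by injectivity it is the unique such $x$; therefore $[b(A)]^{-1}y=x=d(A)y=\big(\int_{-\infty}^{\infty}\frac{1}{b(\lambda)}E(d\lambda)\big)y$, which is the asserted formula. The step I expect to be most delicate is the pushforward-measure identity, and in particular verifying the commutation $E(\sigma)\circ g(A)\subset(\chi_{\sigma}\cdot g)(A)$ at the level of domains; once that is secured, everything else is bookkeeping with Proposition~\ref{zloz}.
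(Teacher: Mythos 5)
The paper does not prove Proposition~\ref{odwrot} at all: it is stated as part of the background on the Stone--von Neumann calculus and implicitly quoted from the references \cite{Alex}, \cite{Mlak}, so there is no in-paper argument to compare yours against. Your blind proof is, however, correct and self-contained modulo the two tools the paper does supply, namely Proposition~\ref{zloz} and the norm identity $\Vert(\int b\,dE)x\Vert^{2}=\int|b|^{2}\Vert E(d\lambda)x\Vert^{2}$. The step you flag as delicate is fine: $D(E(\sigma)\circ g(A))=D(g(A))$ because $E(\sigma)$ is everywhere defined, so the inclusion $E(\sigma)\circ g(A)\subset(\chi_{\sigma}\cdot g)(A)$ from Proposition~\ref{zloz} immediately yields the pushforward-measure identity, and from there the domain bookkeeping ($d(A)y\in D(b(A))$ with $\Vert b(A)d(A)y\Vert=\Vert y\Vert$, and symmetrically) correctly establishes injectivity of $b(A)$, the identification $\operatorname{Range}(b(A))=D(d(A))$, and the formula $[b(A)]^{-1}=d(A)$. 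The only points worth making explicit in a final write-up are (i) that $b\cdot d=1$ only $E$-a.e.\ and that functions equal $E$-a.e.\ integrate to the same operator (the convention of Remark~\ref{sense}), and (ii) that equality of the two scalar measures extends from Borel sets to integrals of arbitrary nonnegative Borel functions $|h|^{2}$ by the usual simple-function approximation; both are routine.
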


\subsection{Friedrich's extension}

In this subsection we recall the Friedrich's procedure of extension of a
symmetric operator to a self-adjoint one (see \cite{Helffer}) and apply this
procedure in details to the Dirichlet-Laplace operator.

\subsubsection{General case}

Let $H$ be a real Hilbert space with a scalar product $\left\langle
\cdot,\cdot\right\rangle $ and $T_{0}:D(T_{0})\subset H\rightarrow H$ - a
densely defined symmetric and positive-definite operator, i.e. $\left\langle
T_{0}u,u\right\rangle \geq\left\Vert u\right\Vert ^{2}$ for $u\in D(T_{0})$
(it is sufficient to assume that $T_{0}$ is semi-bounded, i.e. $\left\langle
T_{0}u,u\right\rangle \geq-\alpha\left\Vert u\right\Vert ^{2}$ for $u\in
D(T_{0})$, where $\alpha>0$). Denote%
\[
a_{0}(u,v)=\left\langle T_{0}u,v\right\rangle
\]
for $u,v\in D(T_{0})$ and define a set $V$ in the following way: $u\in V$ if
and only if there exists a sequence $(u_{n})\subset D(T_{0})$ which is
fundamental in $D(T_{0})$ with respect to the norm $p_{0}(u)=\sqrt{a_{0}%
(u,u)}$ and $u=\lim u_{n}$ in $H$. One proves that $V$ is dense in $H$. Let us
introduce in $V$ the scalar product%
\[
\left\langle u,v\right\rangle _{V}=\lim a_{0}(u_{n},v_{n})
\]
where $(u_{n}),(v_{n})$ are sequences from the definition $V$ (one can show
that the value $\left\langle u,v\right\rangle _{V}$ does not depend on the
choice of sequences $(u_{n}),(v_{n})$). $V$ with the above scalar product is
complete. Moreover, $\left\Vert u\right\Vert _{V}\geq\left\Vert u\right\Vert
_{H}$ for any $u\in V$. Now, let us define the set%
\begin{align*}
D(S)  &  =\{u\in V;\ \left\langle u,\cdot\right\rangle _{V}\text{ is a
continuous functional on }V\text{ }\\
&  \text{with respect to topology induced from }H\}\text{.}%
\end{align*}
If $u\in D(S)$, then by $Su$ we denote a unique element from $H$ determining
(according to the Riesz theorem) the functional $\left\langle u,\cdot
\right\rangle _{V}$ (after its continuous extension on the whole space $H$).
Using a general version of Lax-Milgram theorem due to Lions (see
\cite[Theorems 3.6 and 3.7]{Helffer}) we obtain the following result.

\begin{theorem}
\label{Fried}Operator $S:D(S)\subset H\rightarrow H$ is densely defined
bijection and self-adjoint extension of $T_{0}$, i.e. $S=S^{\ast}$ and
$T_{0}\subset S$.
\end{theorem}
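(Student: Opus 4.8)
The plan is to verify the four assertions --- $T_0\subset S$, density of $D(S)$, bijectivity of $S$ onto $H$, and $S=S^{\ast}$ --- in that order, using only the Riesz representation theorem applied in the two Hilbert spaces $H$ and $(V,\langle\cdot,\cdot\rangle_V)$. This is the concrete shape taken by the Lax--Milgram/Lions result cited: here the bilinear form on $V$ is literally a scalar product, so no coercivity hypothesis beyond the built-in estimate $\|u\|_H\le\|u\|_V$ is needed.

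First I would show $T_0\subset S$. For $u,v\in D(T_0)$ one has $\langle u,v\rangle_V=a_0(u,v)=\langle T_0u,v\rangle_H$, so $|\langle u,v\rangle_V|\le\|T_0u\|_H\|v\|_H$; since $D(T_0)$ is dense in $V$ (this is immediate from the construction of $V$ as the $p_0$-completion of $D(T_0)$ realized inside $H$), the functional $\langle u,\cdot\rangle_V$ is continuous on $V$ for the topology induced by $H$. Hence $u\in D(S)$ and, by uniqueness in the Riesz theorem, $Su=T_0u$. In particular $D(T_0)\subset D(S)$, so $D(S)$ is dense in $H$ and $S$ is densely defined.

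Next, bijectivity. Injectivity is immediate, since $Su=0$ forces $\|u\|_V^2=\langle u,u\rangle_V=\langle Su,u\rangle_H=0$. For surjectivity, given $f\in H$ the map $v\mapsto\langle f,v\rangle_H$ on $V$ satisfies $|\langle f,v\rangle_H|\le\|f\|_H\|v\|_H\le\|f\|_H\|v\|_V$, hence is a bounded linear functional on the Hilbert space $(V,\langle\cdot,\cdot\rangle_V)$; by the Riesz theorem in $V$ there is $u\in V$ with $\langle u,v\rangle_V=\langle f,v\rangle_H$ for all $v\in V$, which exhibits $\langle u,\cdot\rangle_V$ as $H$-continuous, so $u\in D(S)$ and $Su=f$. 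Thus $S:D(S)\to H$ is a bijection. Along the way I would record the elementary estimate $\|u\|_H^2\le\|u\|_V^2=\langle Su,u\rangle_H\le\|Su\|_H\|u\|_H$, i.e. $\|u\|_H\le\|Su\|_H$ for $u\in D(S)$, so that $S^{-1}$ is a bounded operator defined on all of $H$.

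Finally, self-adjointness, which is the only delicate point. Symmetry of $S$ is clear: for $u,w\in D(S)$, using $w\in V$ and the defining relation for $Su$ (and symmetrically for $Sw$), $\langle Su,w\rangle_H=\langle u,w\rangle_V=\langle w,u\rangle_V=\langle Sw,u\rangle_H$. To upgrade symmetry to $S=S^{\ast}$ I would argue that $S^{-1}$ is symmetric (because $S$ is symmetric and bijective), bounded, and everywhere defined on $H$, hence self-adjoint, so its inverse $S$ is self-adjoint as well. Equivalently, one may prove $D(S^{\ast})\subset D(S)$ directly: given $v\in D(S^{\ast})$, surjectivity yields $u\in D(S)$ with $Su=S^{\ast}v$, whence $\langle Sw,v-u\rangle_H=\langle w,S^{\ast}v\rangle_H-\langle Sw,u\rangle_H=0$ for all $w\in D(S)$ by symmetry of $S$ on $u,w$; since $\mathrm{Ran}(S)=H$ this gives $v=u\in D(S)$ and $S^{\ast}v=Sv$. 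Either route closes the argument. The substantive work, and the only place where positive-definiteness of $T_0$ is genuinely used, lies in setting up $(V,\langle\cdot,\cdot\rangle_V)$ with $\|\cdot\|_H\le\|\cdot\|_V$ so that both Riesz arguments apply; everything afterward is bookkeeping.
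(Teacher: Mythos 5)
Your proof is correct, but it is worth noting that the paper does not actually prove Theorem \ref{Fried}: it simply invokes a general Lax--Milgram theorem due to Lions, citing \cite[Theorems 3.6 and 3.7]{Helffer}. What you have written is a self-contained substitute for that citation, and it works precisely because in the positive-definite setting the bilinear form \emph{is} the scalar product of $(V,\langle\cdot,\cdot\rangle_V)$, so Lax--Milgram collapses to two applications of the Riesz theorem (one in $V$ for surjectivity, one in $H$ to define $Su$), exactly as you say. All four steps check out: the extension $T_0\subset S$ does need the density of $D(T_0)$ in $(V,\|\cdot\|_V)$ to propagate the bound $|\langle u,v\rangle_V|\le\|T_0u\|_H\|v\|_H$ from $D(T_0)$ to $V$, and that density is indeed immediate from the construction of $V$ as described in the paper (for $u\in V$ with defining sequence $(u_n)$ one has $\|u_n-u\|_V^2=\lim_m p_0(u_n-u_m)^2\to 0$); the identity $\langle Su,v\rangle_H=\langle u,v\rangle_V$ for $u\in D(S)$, $v\in V$ then drives injectivity, the a priori bound $\|u\|_H\le\|Su\|_H$, and symmetry; and either of your two closing arguments (self-adjointness of the bounded, everywhere-defined, symmetric $S^{-1}$, or the direct verification $D(S^{\ast})\subset D(S)$ via surjectivity) is a standard and valid way to upgrade symmetry to $S=S^{\ast}$. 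The trade-off is the expected one: the paper's route is shorter and covers the general semi-bounded case by renormalizing the form, while your argument is elementary and transparent but uses positive-definiteness (through $\|\cdot\|_H\le\|\cdot\|_V$) in an essential way --- which is harmless here, since the paper only applies the theorem to the positive-definite Dirichlet--Laplace operator.
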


\subsubsection{Case of Dirichlet-Laplace operator}

Let $\Omega\subset\mathbb{R}^{N}$ be an open bounded set and
\[
T_{0}=-\Delta:D(T_{0})=C_{c}^{\infty}\subset L^{2}\rightarrow L^{2}%
\]
be the classical Dirichlet-Laplace operator. Of course, $T_{0}$ is symmetric.
From the Poincare inequality (see \cite[Corollary 9.19]{Brezis}) (\footnote{To
use this inequality it is sufficient to assume that $\Omega$ has finite
measure or a bounded projection on some axis (cf. \cite[Chapter 9.4, Remark
21]{Brezis})}) it follows that $T_{0}$ (up to a constant factor) is
positive-definite. Define%
\[
a_{0}(u,v)=%
{\displaystyle\int\nolimits_{\Omega}}
(-\Delta)u(x)v(x)dx=%
{\displaystyle\int\nolimits_{\Omega}}
\nabla u(x)\nabla v(x)dx
\]
for $u,v\in C_{c}^{\infty}$. In an elementary way, we obtain (below,
$H_{0}^{1}=H_{0}^{1}(\Omega,\mathbb{R})$ is the closure of $C_{c}^{\infty}$ in
$H^{1}=H^{1}(\Omega,\mathbb{R})$, where $H^{1}(\Omega,\mathbb{R}%
)=W^{1,2}(\Omega,\mathbb{R})$ is the classical Sobolev space)

\begin{lemma}
The set $V$ coincides with $H_{0}^{1}$ and%
\[
\left\langle u,v\right\rangle _{V}=%
{\displaystyle\int\nolimits_{\Omega}}
\nabla u(x)\nabla v(x)dx.
\]

\end{lemma}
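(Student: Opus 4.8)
The plan is to unwind the definitions on both sides and check the two inclusions $V\subset H_0^1$ and $H_0^1\subset V$, together with the formula for the inner product. Recall that, by definition, $u\in V$ iff there is a sequence $(u_n)\subset C_c^\infty$ which is Cauchy with respect to $p_0(u)=\sqrt{a_0(u,u)}=\bigl(\int_\Omega |\nabla u|^2\bigr)^{1/2}$ and $u_n\to u$ in $L^2$. The key observation, which makes the whole lemma essentially a bookkeeping exercise, is that on $C_c^\infty$ the norm $p_0$ is, by the Poincar\'e inequality, equivalent to the full $H^1$-norm: indeed $p_0(u)^2=\int_\Omega|\nabla u|^2\le \|u\|_{H^1}^2$ trivially, and $\|u\|_{H^1}^2=\|u\|_{L^2}^2+\int_\Omega|\nabla u|^2\le (C^2+1)\int_\Omega|\nabla u|^2$ by Poincar\'e. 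So a sequence in $C_c^\infty$ is $p_0$-Cauchy iff it is $H^1$-Cauchy.

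First I would prove $V\subset H_0^1$. Take $u\in V$ with $(u_n)\subset C_c^\infty$ as above. By the norm equivalence just noted, $(u_n)$ is Cauchy in $H^1$, hence converges in $H^1$ to some $w$; since $H^1$-convergence implies $L^2$-convergence, $w=\lim u_n=u$ in $L^2$, so $u=w\in H^1$. Moreover $u$ is an $H^1$-limit of functions in $C_c^\infty$, so by definition $u\in H_0^1$. For the reverse inclusion $H_0^1\subset V$: if $u\in H_0^1$, pick $(u_n)\subset C_c^\infty$ with $u_n\to u$ in $H^1$; then $(u_n)$ is $H^1$-Cauchy, hence $p_0$-Cauchy by the equivalence, and $u_n\to u$ in $L^2$, which is exactly the condition for $u\in V$. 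This establishes $V=H_0^1$ as sets.

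Finally I would identify the $V$-inner product. For $u,v\in V=H_0^1$, choose approximating sequences $(u_n),(v_n)\subset C_c^\infty$ converging to $u,v$ in $H^1$ (equivalently, in $L^2$ with $p_0$-Cauchy, which by the above amounts to the same thing). Then
\[
\langle u,v\rangle_V=\lim_n a_0(u_n,v_n)=\lim_n\int_\Omega \nabla u_n(x)\cdot\nabla v_n(x)\,dx=\int_\Omega \nabla u(x)\cdot\nabla v(x)\,dx,
\]
where the last equality is the continuity of the bilinear form $(f,g)\mapsto\int_\Omega \nabla f\cdot\nabla g$ with respect to $H^1$-convergence (Cauchy--Schwarz: $|\int\nabla u_n\cdot\nabla v_n-\int\nabla u\cdot\nabla v|\le \|\nabla u_n-\nabla u\|_{L^2}\|\nabla v_n\|_{L^2}+\|\nabla u\|_{L^2}\|\nabla v_n-\nabla v\|_{L^2}\to 0$). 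This also re-confirms that the limit defining $\langle u,v\rangle_V$ does not depend on the chosen sequences.

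The only point requiring any care — and the closest thing to an obstacle — is making sure the Poincar\'e inequality is genuinely available, i.e. that $\Omega$ (bounded, hence of finite measure) satisfies the hypotheses cited from Brezis; this is already invoked in the paragraph preceding the lemma to assert positive-definiteness of $T_0$, so it may be taken as given. Everything else is the routine interplay between Cauchy sequences in equivalent norms and the density of $C_c^\infty$ in $H_0^1$.
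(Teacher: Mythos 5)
Your proof is correct and follows essentially the same route as the paper: identify a $p_0$-Cauchy sequence converging in $L^2$ with an $H^1$-Cauchy sequence, pass to the $H^1$-limit to conclude $u\in H_0^1$, and obtain the inner-product formula by continuity of the Dirichlet form. The only (harmless) difference is that your appeal to the Poincar\'e inequality is superfluous here --- since the approximating sequence already converges in $L^2$, $p_0$-Cauchyness alone yields $H^1$-Cauchyness, and the paper instead concludes directly from the closedness of the weak gradient (Brezis, Ch.\ 9.1, Remark 4).
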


\begin{proof}
Let $u\in V$, i.e. there exists a sequence $(\varphi_{n})\subset C_{c}%
^{\infty}$ and a function $g\in L^{2}$ such that $\varphi_{n}\rightarrow u$ in
$L^{2}$ and $\nabla\varphi_{n}\rightarrow g$ in $L^{2}$. Consequently (see
\cite[Chapter 9.1, Remark 4]{Brezis}), $u\in H^{1}$ and $g=\nabla u$. In the
other words, $\varphi_{n}\rightarrow u$ in $H^{1}$. This means that $u\in
H_{0}^{1}$. Inclusion $H_{0}^{1}\subset V$ is obvious.

We also have%
\[
\left\langle u,v\right\rangle _{V}=\lim a_{0}(\varphi_{n},\psi_{n})=\lim%
{\displaystyle\int\nolimits_{\Omega}}
\nabla\varphi_{n}(x)\nabla\psi_{n}(x)dx=%
{\displaystyle\int\nolimits_{\Omega}}
\nabla u(x)\nabla v(x)dx
\]
where $(\varphi_{n})$, $(\psi_{n})\subset C_{c}^{\infty}$ are the sequences
from the definition of $V$, corresponding to $u$, $v$, respectively.
\end{proof}

We shall say that $u:\Omega\rightarrow\mathbb{R}$ has a weak (minus)
Dirichlet-Laplacian if $u\in H_{0}^{1}$ and there exists a function $g\in
L^{2}$ such that%
\[%
{\displaystyle\int\nolimits_{\Omega}}
\nabla u(x)\nabla v(x)dx=%
{\displaystyle\int\nolimits_{\Omega}}
g(x)v(x)dx
\]
for any $v\in H_{0}^{1}$. The function $g$ will be called the weak
Dirichlet-Laplacian and denoted by $(-\Delta)_{\omega}u$.

\begin{lemma}
The set $D(S)$ coincides with the set of all functions $u:\Omega
\rightarrow\mathbb{R}$ possessing the weak Dirichlet-Laplacian $(-\Delta
)_{\omega}u$ and%
\[
Su=(-\Delta)_{\omega}u
\]
for $u\in D(S).$
\end{lemma}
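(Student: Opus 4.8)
The plan is to prove the two set equality $D(S) = \{u : u \text{ has a weak Dirichlet-Laplacian}\}$ together with the identification $Su = (-\Delta)_\omega u$ by unwinding the definition of $S$ in the light of the previous lemma, which already tells us that $V = H_0^1$ and $\langle u,v\rangle_V = \int_\Omega \nabla u \cdot \nabla v\,dx$.

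First I would take $u \in D(S)$. By definition this means $u \in V = H_0^1$ and the functional $v \mapsto \langle u,v\rangle_V = \int_\Omega \nabla u(x)\nabla v(x)\,dx$ is continuous on $V$ with respect to the $H$-topology, i.e. the $L^2$-topology. Then there is $C>0$ with $|\int_\Omega \nabla u\cdot\nabla v\,dx| \le C\|v\|_{L^2}$ for all $v \in H_0^1$. Since $H_0^1$ is dense in $L^2$, this functional extends uniquely to a continuous linear functional on $L^2$, and the Riesz representation theorem produces a unique $g \in L^2$ with $\int_\Omega \nabla u\cdot\nabla v\,dx = \int_\Omega g(x)v(x)\,dx$ for all $v \in H_0^1$; moreover $Su = g$ by the very definition of $Su$ in the construction of the Friedrich extension. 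But this is exactly the statement that $u$ possesses a weak Dirichlet-Laplacian and that $(-\Delta)_\omega u = g = Su$.

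Conversely, suppose $u$ has a weak Dirichlet-Laplacian, so $u \in H_0^1 = V$ and there is $g \in L^2$ with $\int_\Omega \nabla u\cdot\nabla v\,dx = \int_\Omega g v\,dx$ for all $v \in H_0^1$. Then for $v \in V$ we have $|\langle u,v\rangle_V| = |\int_\Omega g v\,dx| \le \|g\|_{L^2}\|v\|_{L^2}$ by Cauchy-Schwarz, so $\langle u,\cdot\rangle_V$ is $L^2$-continuous on $V$, which means $u \in D(S)$. Its Riesz representative in $L^2$ is forced to be $g$ by density of $H_0^1$ in $L^2$, hence $Su = g = (-\Delta)_\omega u$. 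This closes the equality of the two sets and the identification of the operators.

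I do not expect a serious obstacle here: the entire argument is a direct translation between the abstract Friedrich construction and the concrete notion of a weak Dirichlet-Laplacian, and the only nontrivial input — that $V = H_0^1$ with the gradient inner product — is the content of the preceding lemma. The one point requiring a little care is the uniqueness of the Riesz representative as an element of $L^2$ rather than merely of $V$: this is where density of $C_c^\infty$ (hence of $H_0^1$) in $L^2$ must be invoked, so that the identity $\int_\Omega g v\,dx = \int_\Omega g' v\,dx$ for all $v \in H_0^1$ forces $g = g'$ in $L^2$. Everything else is routine.
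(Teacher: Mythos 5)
Your argument is correct and matches the paper's proof in substance: the paper establishes the same chain of equivalences by rewriting $D(S)$ using $V=H_0^1$, extending $\langle u,\cdot\rangle_V$ continuously to $L^2$ to obtain the Riesz representative $g$, and identifying $Su=g=(-\Delta)_\omega u$. You merely spell out the two inclusions separately and note the density argument for uniqueness of $g$, which the paper leaves implicit.
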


\begin{proof}
We have%
\begin{align*}
D(S)  &  =\{u\in H_{0}^{1};\ \left\langle u,\cdot\right\rangle _{V}\text{ is a
continuous functional on }H_{0}^{1}\text{ }\\
&  \text{with respect to the norm }\left\Vert \cdot\right\Vert _{L^{2}}\}\\
&  =\{u\in H_{0}^{1};\ \underset{g\in L^{2}}{\exists}\;\underset{v\in
H_{0}^{1}}{\forall}%
{\displaystyle\int\nolimits_{\Omega}}
\nabla u(x)\nabla v(x)dx=%
{\displaystyle\int\nolimits_{\Omega}}
g(x)v(x)dx\}\\
&  =\{u:\Omega\rightarrow\mathbb{R};\ u\text{ has the weak
Dirichlet-Laplacian}\}
\end{align*}
(for the second equality we extend functional $\left\langle u,\cdot
\right\rangle _{V}$ (preserving continuity) on $L^{2}$ to obtain $g\in L^{2}%
$). Of course, $Su=g=(-\Delta)_{\omega}u$.
\end{proof}

In the next, we shall denote the set $D(S)$ by $D((-\Delta)_{\omega})$ and $S$
by $(-\Delta)_{\omega}$. From Theorem \ref{Fried} it follows that
$(-\Delta)_{\omega}:D((-\Delta)_{\omega})\subset L^{2}\rightarrow L^{2}$ is
bijective, $T_{0}\subset(-\Delta)_{\omega}$ and%
\[
\lbrack(-\Delta)_{\omega}]^{\ast}=(-\Delta)_{\omega}\text{.}%
\]

Let us also observe that $-\Delta\subset(-\Delta)_{\omega}$ where
$-\Delta:H_{0}^{1}\cap H^{2}\subset L^{2}\rightarrow L^{2}$ is the (strong)
Dirichlet-Laplacian, i.e.%
\[
H_{0}^{1}\cap H^{2}\subset D((-\Delta)_{\omega})\text{ }%
\]
and
\[
(-\Delta)_{\omega}u=(-\Delta)u
\]
for $u\in H_{0}^{1}\cap H^{2}$. Indeed, if $u\in H_{0}^{1}\cap H^{2}$, then,
for any $i=1,...,N$, there exists a constant $C_{i}>0$ such that%
\[
\left\vert
{\displaystyle\int\nolimits_{\Omega}}
\frac{\partial u}{\partial x_{i}}(x)\frac{\partial\varphi}{\partial x_{i}%
}(x)dx\right\vert \leq C_{i}\left\Vert \varphi\right\Vert _{L^{2}}%
\]
for any $\varphi\in C_{c}^{\infty}$. Let us fix $v\in H_{0}^{1}$ and a
sequence $(\varphi_{m})\subset C_{c}^{\infty}$ such that $\varphi
_{m}\rightarrow v$ in $H^{1}$, i.e. $\varphi_{m}\rightarrow v$ in $L^{2}$ and
$\nabla\varphi_{m}\rightarrow\nabla v$ in $L^{2}$. Then,%
\[
\left\vert
{\displaystyle\int\nolimits_{\Omega}}
\frac{\partial u}{\partial x_{i}}(x)\frac{\partial v}{\partial x_{i}%
}(x)dx\right\vert \leq C_{i}\left\Vert v\right\Vert _{L^{2}}%
\]
and, consequently,%
\[
\left\vert
{\displaystyle\int\nolimits_{\Omega}}
\nabla u(x)\nabla v(x)dx\right\vert \leq%
{\displaystyle\sum\nolimits_{i=1}^{n}}
C_{i}\left\Vert v\right\Vert _{L^{2}}%
\]
for any $v\in H_{0}^{1}$. Thus, $u\in\{u\in H_{0}^{1};\ \left\langle
u,\cdot\right\rangle _{V}$ is a continuous functional on $H_{0}^{1}$ with
respect to the norm $\left\Vert \cdot\right\Vert _{L^{2}}\}=D((-\Delta
)_{\omega})$.

For $u\in H_{0}^{1}\cap H^{2}$, denote by $\Lambda_{u}$ the functional
$\left\langle u,\cdot\right\rangle _{V}:V\rightarrow L^{2}$ and by
$\widetilde{\Lambda}_{u}$ - a continuous extension of $\Lambda_{u}$ on $L^{2}$
with respect to $\left\Vert \cdot\right\Vert _{L^{2}}$. Clearly,%
\[
\widetilde{\Lambda}_{u}v=%
{\displaystyle\int\nolimits_{\Omega}}
(-\Delta)_{\omega}u(x)v(x)dx
\]
for $v\in L^{2}$. We shall show that $(-\Delta)_{\omega}u=(-\Delta)u$. Indeed,
let us consider an operator%
\[
\widetilde{\Gamma}_{u}:L^{2}\ni v\longmapsto%
{\displaystyle\int\nolimits_{\Omega}}
(-\Delta)u(x)v(x)dx\in\mathbb{R}\text{.}%
\]
For any $\varphi\in C_{c}^{\infty}$, we have%
\begin{align*}
\widetilde{\Gamma}_{u}\varphi &  =-%
{\displaystyle\int\nolimits_{\Omega}}
{\displaystyle\sum\nolimits_{i=1}^{N}}
\frac{\partial^{2}u}{\partial x_{i}^{2}}(x)\varphi(x)=-%
{\displaystyle\sum\nolimits_{i=1}^{N}}
{\displaystyle\int\nolimits_{\Omega}}
\frac{\partial}{\partial x_{i}}(\frac{\partial u}{\partial x_{i}}%
)(x)\varphi(x)\\
&  =%
{\displaystyle\sum\nolimits_{i=1}^{N}}
{\displaystyle\int\nolimits_{\Omega}}
\frac{\partial u}{\partial x_{i}}(x)\frac{\partial\varphi}{\partial x_{i}%
}(x)dx=%
{\displaystyle\int\nolimits_{\Omega}}
\nabla u(x)\nabla\varphi(x)dx=\widetilde{\Lambda}_{u}\varphi.
\end{align*}
So, from the continuity of $\widetilde{\Lambda}_{u}$, $\widetilde{\Gamma}_{u}$
it follows that $\widetilde{\Lambda}_{u}=\widetilde{\Gamma}_{u}$ on $L^{2}$
and, consequently,%
\[
(-\Delta)_{\omega}u=(-\Delta)u.
\]

Finally, we have

\begin{theorem}
The operator
\[
(-\Delta)_{\omega}:D((-\Delta)_{\omega})\subset L^{2}\rightarrow L^{2}%
\]
is the self-adjoint extension of the operator $T_{0}$ and
\[
T_{0}\subset(-\Delta)\subset(-\Delta)_{\omega}%
\]
(the first relation is a standard result).
\end{theorem}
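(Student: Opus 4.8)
The plan is essentially to assemble facts that have already been established in this subsection, since the substantive work is done. First I would check that $T_{0}=-\Delta:C_{c}^{\infty}\subset L^{2}\rightarrow L^{2}$ satisfies the hypotheses of the Friedrich's construction: it is densely defined because $C_{c}^{\infty}$ is dense in $L^{2}$, it is symmetric by integration by parts, and by the Poincar\'e inequality quoted above it is positive-definite up to a constant factor (equivalently, semi-bounded). Applying Theorem~\ref{Fried} to this $T_{0}$ then produces a densely defined, bijective, self-adjoint operator $S:D(S)\subset L^{2}\rightarrow L^{2}$ with $S=S^{\ast}$ and $T_{0}\subset S$.

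Next I would invoke the two Lemmas just proved. The first identifies the form space $V$ with $H_{0}^{1}$ and gives $\langle u,v\rangle _{V}=\int_{\Omega}\nabla u(x)\nabla v(x)\,dx$. The second identifies $D(S)$ with the set of functions possessing a weak Dirichlet-Laplacian and shows $Su=(-\Delta)_{\omega}u$ there. Combining these with the previous paragraph gives precisely that $(-\Delta)_{\omega}:D((-\Delta)_{\omega})\subset L^{2}\rightarrow L^{2}$ is a self-adjoint extension of $T_{0}$, which is the first assertion.

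For the chain $T_{0}\subset(-\Delta)\subset(-\Delta)_{\omega}$ I would treat the two inclusions separately. The inclusion $T_{0}\subset(-\Delta)$ is the standard fact that $C_{c}^{\infty}\subset H_{0}^{1}\cap H^{2}$ and that for $\varphi\in C_{c}^{\infty}$ the strong (Sobolev) Laplacian $-\Delta\varphi$ coincides with the classical pointwise one $T_{0}\varphi$; I would just cite this. The inclusion $(-\Delta)\subset(-\Delta)_{\omega}$ has in fact already been verified in the discussion immediately preceding the statement: it was shown there that each $u\in H_{0}^{1}\cap H^{2}$ lies in $D((-\Delta)_{\omega})$ — because the bound $|\int_{\Omega}\nabla u(x)\nabla v(x)\,dx|\le(\sum_{i}C_{i})\|v\|_{L^{2}}$ makes $\langle u,\cdot\rangle_{V}$ continuous on $H_{0}^{1}$ for the $L^{2}$-norm — and that $(-\Delta)_{\omega}u=(-\Delta)u$, via the identity $\widetilde{\Lambda}_{u}=\widetilde{\Gamma}_{u}$ on $L^{2}$ obtained from density of $C_{c}^{\infty}$ in $L^{2}$ together with the continuity of both functionals.

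I do not expect a genuine obstacle here: each of the three ingredients is either Theorem~\ref{Fried}, one of the two preceding Lemmas, a routine Sobolev fact, or the computation already carried out above. The only point consciously left to the literature is $T_{0}\subset(-\Delta)$ (distributional differentiation of a test function reproduces classical derivatives), and that is genuinely elementary. Accordingly I would keep the proof to a couple of sentences, roughly: the first assertion follows from Theorem~\ref{Fried} and the two preceding Lemmas; of the remaining inclusions, $T_{0}\subset(-\Delta)$ is standard and $(-\Delta)\subset(-\Delta)_{\omega}$ was established in the paragraph before the theorem.
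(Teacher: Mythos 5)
Your proposal is correct and matches the paper's own treatment: the theorem is stated as a summary of the preceding subsection, with the self-adjoint extension coming from Theorem~\ref{Fried} together with the two lemmas identifying $V=H_{0}^{1}$ and $D(S)=D((-\Delta)_{\omega})$, the inclusion $T_{0}\subset(-\Delta)$ cited as standard, and $(-\Delta)\subset(-\Delta)_{\omega}$ established by exactly the continuity estimate and the identity $\widetilde{\Lambda}_{u}=\widetilde{\Gamma}_{u}$ you describe. Nothing is missing.
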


\begin{remark}
If $N=1$, then $(-\Delta)=(-\Delta)_{\omega}$ because $(-\Delta)$ is
self-adjoint and, consequently, no proper self-adjoint extension of it exists.
In the next section, we shall show that in some cases of $N>1$ the equality
$(-\Delta)=(-\Delta)_{\omega}$ holds true.
\end{remark}

\begin{remark}
\label{But}The operator $(-\Delta)_{\omega}$ is considered in \cite[Section
8.2]{Buttazzo} and called ''Laplace-Dirichlet
operator'' (without ''weak'')
and denoted by $(-\Delta)$.
\end{remark}

\subsection{Regularity of weak Dirichlet-Laplacian}

In this subsection, basing ourselves on results from \cite{Grisv1},
\cite{Grisv2} we describe two cases when weak Dirichlet-Laplace operator and
Dirichlet-Laplace operator coincide.

\subsubsection{$\Omega$ of class $C^{1,1}$}

To prove that $(-\Delta)=(-\Delta)_{\omega}$ it is sufficient to show that
$D((-\Delta)_{\omega})\subset D(-\Delta)$. Let us start with the following two
classical results. First of them is a particular case of \cite[Proposition
5.6.1]{Buttazzo} and the second one follows from \cite[Theorem 2.2.2.3]%
{Grisv1}.

\begin{theorem}
\label{tr}If $\Omega\subset\mathbb{R}^{N}$ is an open bounded set of class
$C^{1}$, then%
\[
H_{0}^{1}=\{u\in H^{1};\ tru=0\}
\]
where $tru$ denotes the trace of $u$ on $\Omega$.
\end{theorem}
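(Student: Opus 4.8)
The plan is to prove the two inclusions separately, the nontrivial content lying in $\{u\in H^{1};\ \operatorname{tr}u=0\}\subset H_{0}^{1}$. For the reverse inclusion I would invoke the continuity of the trace operator $\operatorname{tr}:H^{1}\rightarrow L^{2}(\partial\Omega)$, which is available for $C^{1}$ domains, together with the fact that $\operatorname{tr}\varphi=0$ for every $\varphi\in C_{c}^{\infty}$ since such $\varphi$ vanishes in a neighborhood of $\partial\Omega$. Given $u\in H_{0}^{1}$, by definition there is a sequence $(\varphi_{n})\subset C_{c}^{\infty}$ with $\varphi_{n}\rightarrow u$ in $H^{1}$; passing to the limit and using continuity of $\operatorname{tr}$ gives $\operatorname{tr}u=\lim\operatorname{tr}\varphi_{n}=0$. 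This settles $H_{0}^{1}\subset\{u\in H^{1};\ \operatorname{tr}u=0\}$.

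For the harder inclusion I would localize and flatten the boundary. Since $\partial\Omega$ is compact and of class $C^{1}$, I would cover it by finitely many open sets $U_{1},\dots,U_{m}$ in each of which, after a $C^{1}$ change of coordinates $\Psi_{i}$, the set $\Omega\cap U_{i}$ corresponds to a piece of the half-space $\mathbb{R}_{+}^{N}=\{x_{N}>0\}$ and $\partial\Omega\cap U_{i}$ to a piece of $\{x_{N}=0\}$; then add an interior open set $U_{0}$ with $\overline{U_{0}}\subset\Omega$ so that $U_{0},\dots,U_{m}$ cover $\overline{\Omega}$. Choosing a smooth partition of unity $\{\theta_{i}\}_{i=0}^{m}$ subordinate to this cover, I would write $u=\sum_{i=0}^{m}\theta_{i}u$. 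Each summand lies in $H^{1}$ and satisfies $\operatorname{tr}(\theta_{i}u)=\theta_{i}\operatorname{tr}u=0$, so it suffices to approximate each $\theta_{i}u$ in $H^{1}$ by elements of $C_{c}^{\infty}$. The interior term $\theta_{0}u$ has compact support in $\Omega$ and is handled by a standard mollification, which yields $C_{c}^{\infty}$ approximants converging in $H^{1}$.

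The core of the argument is the treatment of a boundary term. Transporting $\theta_{i}u$ through $\Psi_{i}$ gives a function $w\in H^{1}(\mathbb{R}_{+}^{N})$ with compact support and zero trace on $\{x_{N}=0\}$; here I would use that a $C^{1}$ diffeomorphism with bounded, continuous Jacobian preserves $H^{1}$ membership and commutes with the trace. The key lemma is that the extension $\widetilde{w}$ of $w$ by zero to $\mathbb{R}^{N}$ belongs to $H^{1}(\mathbb{R}^{N})$ with $\nabla\widetilde{w}$ equal to the zero-extension of $\nabla w$; this is exactly where the vanishing of the trace is used, since an integration by parts shows that no boundary distribution appears in the weak derivative of $\widetilde{w}$. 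Once this is known, I would translate $\widetilde{w}$ a small distance $h>0$ into the interior, setting $w_{h}(x)=\widetilde{w}(x',x_{N}-h)$, whose support lies in $\{x_{N}\geq h\}$, and then mollify $w_{h}$ with a kernel of radius less than $h$ to obtain functions in $C_{c}^{\infty}(\mathbb{R}_{+}^{N})$. Continuity of translation in $L^{2}$ gives $w_{h}\rightarrow\widetilde{w}$ in $H^{1}$ as $h\rightarrow0$, and letting the mollification radius shrink faster than $h$ produces the desired $H^{1}$-approximation of $w$ by compactly supported smooth functions. Pulling back through $\Psi_{i}^{-1}$ and summing over $i$ then recovers $u$ as an $H^{1}$-limit of $C_{c}^{\infty}$ functions, i.e. $u\in H_{0}^{1}$.

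The main obstacle I expect is the zero-extension lemma on the half-space: one must verify rigorously that $\widetilde{w}\in H^{1}(\mathbb{R}^{N})$ and identify its weak gradient, and it is precisely the hypothesis $\operatorname{tr}w=0$, combined with the $C^{1}$ regularity used to define the trace and the flattening map, that eliminates the would-be singular boundary contribution. A secondary technical point is to confirm that the $C^{1}$---as opposed to smoother---change of variables genuinely preserves $H^{1}$ and interacts correctly with both the trace and the extension by zero; this rests on the boundedness and continuity of the Jacobian on the relevant compact set.
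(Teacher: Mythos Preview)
Your outline is the standard textbook proof and is correct; the two inclusions are handled exactly as one finds, for instance, in Brezis or Evans, and the technical caveats you flag (the zero-extension lemma on the half-space and the behaviour of $H^{1}$ and the trace under a $C^{1}$ change of variables) are genuine but routinely verifiable.

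However, there is nothing to compare against: the paper does not prove this theorem. It is introduced with the sentence ``Let us start with the following two classical results. First of them is a particular case of \cite[Proposition 5.6.1]{Buttazzo}\ldots'' and then simply stated. So the paper treats it as a quoted result, while you have supplied an actual argument. Your approach is precisely the one underlying the cited reference, so there is no methodological divergence to discuss---only the difference that you give the proof and the paper does not.
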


\begin{theorem}
\label{Gris}If $\Omega\subset\mathbb{R}^{N}$ is an open bounded set of class
$C^{1,1}$, then, for every $g\in L^{2}$, there exists a unique solution $u\in
H^{2}$ of the problem:%
\[%
{\displaystyle\int\nolimits_{\Omega}}
\nabla u(x)\nabla v(x)dx=%
{\displaystyle\int\nolimits_{\Omega}}
g(x)v(x)dx
\]
for any $v\in\{v\in H^{1};\ trv=0\}$, with boundary condition
\[
tru=0.
\]

\end{theorem}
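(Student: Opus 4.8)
The plan is to split the statement into its two qualitatively different halves: the existence and uniqueness of a solution, which is elementary Hilbert-space theory, and the $H^2$-regularity, which is the substantive elliptic estimate. First I would settle existence and uniqueness. Since a domain of class $C^{1,1}$ is in particular of class $C^1$, Theorem \ref{tr} identifies the test space $\{v\in H^1;\ \mathrm{tr}\,v=0\}$ with $H_0^1$, and likewise the side condition $\mathrm{tr}\,u=0$ simply means $u\in H_0^1$. The bilinear form $a(u,v)=\int_\Omega\nabla u\cdot\nabla v$ is bounded on $H_0^1\times H_0^1$ and, by the Poincar\'e inequality already used to make $T_0$ positive-definite, coercive on $H_0^1$; the right-hand side $v\mapsto\int_\Omega gv$ is a bounded functional there. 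The Lax--Milgram theorem (equivalently the Riesz theorem, $a$ being symmetric) then produces a unique $u\in H_0^1$ satisfying the variational identity. It remains to prove that this $u$ lies in $H^2$.

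For the regularity I would use Nirenberg's method of difference quotients, treating the interior and the boundary separately and patching by a partition of unity. In the interior, fix $\Omega'\Subset\Omega$ and a cut-off $\zeta\in C_c^\infty(\Omega)$ with $\zeta\equiv 1$ on $\Omega'$. For a coordinate direction $e_k$ and small $h$, insert the admissible test function $v=-D_{-h}^k(\zeta^2 D_h^k u)$, where $D_h^k w(x)=h^{-1}(w(x+he_k)-w(x))$, into the identity. Discrete summation by parts together with the product rule for difference quotients and the coercivity of $a$ gives a bound on $\|\zeta\,\nabla D_h^k u\|_{L^2}$ uniform in $h$. Passing to the limit $h\to 0$ shows $\partial_k u\in H^1(\Omega')$ for each $k$, hence $u\in H^2_{\mathrm{loc}}(\Omega)$ with the expected estimate.

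The boundary estimate is the main obstacle, and is exactly where the $C^{1,1}$ hypothesis enters. I would cover $\partial\Omega$ by finitely many charts in which, after a rotation, $\partial\Omega$ is the graph $x_N=\phi(x')$ of a $C^{1,1}$ function. The change of variables $y'=x'$, $y_N=x_N-\phi(x')$ straightens the boundary to $\{y_N=0\}$; because $\phi$ is $C^{1,1}$ its Jacobian is Lipschitz, so the Laplacian transforms into a uniformly elliptic divergence-form operator $\sum_{i,j}\partial_i(a_{ij}\partial_j)$ with coefficients $a_{ij}\in W^{1,\infty}$. The transformed solution vanishes on the flat piece of boundary (since $\mathrm{tr}\,u=0$), so the tangential difference quotients $D_h^k$, $k=1,\dots,N-1$, preserve the boundary condition and remain admissible as test functions. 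Repeating the difference-quotient estimate in these tangential directions controls all second derivatives $\partial_i\partial_j$ except the purely normal one $\partial_N^2$. That last derivative I would recover algebraically from the equation itself: uniform ellipticity gives $a_{NN}\geq c>0$, and solving $a_{NN}\partial_N^2\tilde u=\tilde g-(\text{terms in the already-controlled derivatives and in the Lipschitz coefficients})$ shows $\partial_N^2\tilde u\in L^2$. Transforming back (a $C^{1,1}$-diffeomorphism preserves $H^2$) and summing the interior and boundary contributions through the partition of unity yields $u\in H^2(\Omega)$.

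I would stress that the delicacy of the boundary step is precisely the low regularity of the chart: with only $C^{1,1}$ one cannot differentiate the coefficients $a_{ij}$ classically, so a direct differentiation of the equation is unavailable. The difference-quotient method is chosen exactly because it requires only $a_{ij}\in W^{1,\infty}$: the commutator terms produced when a difference quotient falls on a coefficient stay bounded in $L^2$ by the Lipschitz bound, so the uniform-in-$h$ estimate closes. Verifying that these commutator terms are controlled, and that the algebraic recovery of $\partial_N^2\tilde u$ uses only quantities already estimated, is the technical heart of the argument.
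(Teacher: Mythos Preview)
Your proof sketch is correct and follows the standard Nirenberg difference-quotient method for $H^{2}$-regularity up to the boundary under $C^{1,1}$ smoothness. However, you should be aware that the paper does not prove this theorem at all: it is quoted as a known result, with the sentence preceding it stating that it ``follows from \cite[Theorem 2.2.2.3]{Grisv1}.'' So there is no proof in the paper to compare against; the authors simply import the statement from Grisvard's monograph and use it as a black box.

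That said, what you have written is essentially the argument Grisvard himself gives (and the one found in standard references such as Gilbarg--Trudinger or Evans), so in spirit you have reproduced the underlying proof rather than offered an alternative. Your identification of the key point---that $C^{1,1}$ charts yield Lipschitz ($W^{1,\infty}$) coefficients after flattening, which is exactly the minimal regularity needed for the difference-quotient commutators to stay in $L^{2}$---is accurate, as is the algebraic recovery of $\partial_{N}^{2}\tilde u$ from the equation using uniform ellipticity. Nothing is missing.
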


From the bijectivity of $(-\Delta)_{\omega}:D((-\Delta)_{\omega})\subset
L^{2}\rightarrow L^{2}$ and from Theorem \ref{tr} it follows that the unique
solution $u$ from Theorem \ref{Gris} is the unique solution of the problem%
\[
(-\Delta)_{\omega}u=g\text{.}%
\]
This simply implies

\begin{theorem}
If $\Omega\subset\mathbb{R}^{N}$ is an open bounded set of class $C^{1,1}$,
then\newline$(-\Delta)_{\omega}=(-\Delta)$.
\end{theorem}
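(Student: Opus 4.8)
The plan is to show the set-theoretic inclusion $D((-\Delta)_{\omega})\subset D(-\Delta)$, since the reverse inclusion $D(-\Delta)\subset D((-\Delta)_{\omega})$ together with the agreement of the two operators on $D(-\Delta)$ has already been established in the previous subsection (the relation $T_{0}\subset(-\Delta)\subset(-\Delta)_{\omega}$). Once both domains coincide and the operators agree on the common domain, we get $(-\Delta)_{\omega}=(-\Delta)$ as operators.

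To prove $D((-\Delta)_{\omega})\subset D(-\Delta)$, I would take an arbitrary $u\in D((-\Delta)_{\omega})$ and set $g=(-\Delta)_{\omega}u\in L^{2}$. By the definition of the weak Dirichlet-Laplacian, $u\in H_{0}^{1}$ and $\int_{\Omega}\nabla u(x)\nabla v(x)\,dx=\int_{\Omega}g(x)v(x)\,dx$ for all $v\in H_{0}^{1}$; using Theorem \ref{tr} (valid since $\Omega$ is of class $C^{1,1}$, hence of class $C^{1}$), this says precisely that $u$ solves the variational problem of Theorem \ref{Gris} with datum $g$ and boundary condition $tru=0$. Now apply Theorem \ref{Gris}: there exists a \emph{unique} $\widetilde{u}\in H^{2}$ solving that same problem with the same $g$. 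The key point is then to identify $u$ with $\widetilde{u}$. Since $\widetilde{u}\in H^{2}\cap H_{0}^{1}\subset D((-\Delta)_{\omega})$ and $\widetilde{u}$ also satisfies the weak equation with right-hand side $g$, we have $(-\Delta)_{\omega}\widetilde{u}=g=(-\Delta)_{\omega}u$; bijectivity of $(-\Delta)_{\omega}:D((-\Delta)_{\omega})\to L^{2}$ (from Theorem \ref{Fried}) forces $u=\widetilde{u}\in H^{2}$. Hence $u\in H_{0}^{1}\cap H^{2}=D(-\Delta)$, which is what we wanted. This is essentially the remark made in the paragraph preceding the statement, organized into a clean argument.

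The one subtlety to handle carefully is matching the function spaces in the two theorems: Theorem \ref{Gris} phrases the test-function class and boundary condition in terms of traces, namely $\{v\in H^{1}:trv=0\}$ and $tru=0$, whereas the definition of $(-\Delta)_{\omega}$ uses $H_{0}^{1}$. Theorem \ref{tr} bridges this gap by giving $H_{0}^{1}=\{u\in H^{1}:tru=0\}$ for $C^{1}$ domains, so the hypotheses and conclusions of the two formulations are literally the same; I would invoke it explicitly at both the "test function" level and the "boundary condition" level. I do not expect any real obstacle here: the heavy lifting (the $H^{2}$-regularity estimate) is entirely contained in Theorem \ref{Gris}, and everything else is bookkeeping with the definitions plus the already-proven bijectivity. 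The proof is short.

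\begin{proof}
It suffices to prove $D((-\Delta)_{\omega})\subset D(-\Delta)$, because we already know $D(-\Delta)\subset D((-\Delta)_{\omega})$ and $(-\Delta)_{\omega}u=(-\Delta)u$ for $u\in D(-\Delta)=H_{0}^{1}\cap H^{2}$. Let $u\in D((-\Delta)_{\omega})$ and put $g=(-\Delta)_{\omega}u\in L^{2}$. Then $u\in H_{0}^{1}$ and
\[
\int_{\Omega}\nabla u(x)\nabla v(x)\,dx=\int_{\Omega}g(x)v(x)\,dx
\]
for all $v\in H_{0}^{1}$. Since $\Omega$ is of class $C^{1,1}$, it is of class $C^{1}$, so by Theorem \ref{tr} we have $H_{0}^{1}=\{w\in H^{1};\ trw=0\}$; thus $u$ satisfies the variational identity of Theorem \ref{Gris} with datum $g$ and boundary condition $tru=0$. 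By Theorem \ref{Gris} there is a unique $\widetilde{u}\in H^{2}$ solving this problem. In particular $\widetilde{u}\in H^{2}$ and $tr\widetilde{u}=0$, so $\widetilde{u}\in H_{0}^{1}\cap H^{2}\subset D((-\Delta)_{\omega})$, and the variational identity gives $(-\Delta)_{\omega}\widetilde{u}=g=(-\Delta)_{\omega}u$. Since $(-\Delta)_{\omega}:D((-\Delta)_{\omega})\subset L^{2}\rightarrow L^{2}$ is bijective, $u=\widetilde{u}\in H^{2}$, hence $u\in H_{0}^{1}\cap H^{2}=D(-\Delta)$. Therefore $D((-\Delta)_{\omega})=D(-\Delta)$ and $(-\Delta)_{\omega}=(-\Delta)$.
\end{proof}
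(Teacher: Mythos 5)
Your proof is correct and follows essentially the same route as the paper: the paper's (very terse) proof also takes $u\in D((-\Delta)_{\omega})$, sets $g=(-\Delta)_{\omega}u$, and identifies $u$ with the unique $H^{2}$ solution of Theorem \ref{Gris} via Theorem \ref{tr} and the bijectivity of $(-\Delta)_{\omega}$. You have merely written out explicitly the identification step that the paper leaves to the preceding paragraph.
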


\begin{proof}
Let $u\in D((-\Delta)_{\omega})$. Putting $g=(-\Delta)_{\omega}u$ we see that
$u\in H^{2}.$
\end{proof}

In the case of $\Omega\subset\mathbb{R}^{N}$ being of class $C^{2}$ such a
result is the classical Agmon-Douglis-Nirenberg theorem.

\subsubsection{$\Omega$ - convex polygon in $\mathbb{R}^{2}$}

From \cite[Theorem 2.4.3 and the comments following the theorem]{Grisv2} the
next result follows (\footnote{From \cite[Theorem 1.4.7]{Grisv2} it follows
that if $\Omega\subset\mathbb{R}^{2}$ is an open bounded polygon, then
\[
H_{0}^{1}=\{u\in H^{1};\ tr_{j}u=0\text{, }j=1,...,m\}
\]
where $m$ is a number of sides of the polygon and $tr_{j}u$ is a trace of $u$
on $j$-th side.}).

\begin{theorem}
\label{poly}If $\Omega\subset\mathbb{R}^{2}$ is an open bounded convex
polygon, then, for every $g\in L^{2}$, there exists a unique solution $u\in
H_{0}^{1}$ of the problem%
\[%
{\displaystyle\int\nolimits_{\Omega}}
\nabla u(x)\nabla v(x)dx=%
{\displaystyle\int\nolimits_{\Omega}}
g(x)v(x)dx,\ v\in H_{0}^{1},
\]
and $u\in H^{2}$.
\end{theorem}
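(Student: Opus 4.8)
The plan is to split the statement into two parts: the well-posedness of the variational problem in $H_{0}^{1}$ (existence and uniqueness), which follows from the machinery already developed, and the $H^{2}$-regularity of the solution, which is the genuinely delicate point and is where convexity of the polygon becomes essential.

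For existence and uniqueness I would argue that the variational identity is nothing but the defining condition for $u$ to possess a weak Dirichlet-Laplacian with $(-\Delta)_{\omega}u=g$. Since $(-\Delta)_{\omega}:D((-\Delta)_{\omega})\subset L^{2}\rightarrow L^{2}$ was shown earlier (via the Friedrich's construction of Theorem \ref{Fried}) to be a bijection, surjectivity yields a $u\in D((-\Delta)_{\omega})\subset H_{0}^{1}$ solving the problem, and injectivity yields its uniqueness; any $H_{0}^{1}$ solution automatically lies in $D((-\Delta)_{\omega})$, so uniqueness holds among all of $H_{0}^{1}$. At bottom this is the Lax-Milgram theorem applied to the form $a(u,v)=\int_{\Omega}\nabla u(x)\nabla v(x)\,dx$, which is continuous by Cauchy-Schwarz and coercive on $H_{0}^{1}$ by the Poincaré inequality (since $\Omega$ is bounded), tested against the continuous functional $v\mapsto\int_{\Omega}g(x)v(x)\,dx$.

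The substantive part is the $H^{2}$-regularity, and here I would invoke Grisvard's corner analysis \cite{Grisv2}. The obstruction is purely local at the vertices: away from the corners, interior regularity and regularity up to a straight boundary piece already place $u$ in $H^{2}$. Near a vertex of interior angle $\omega_{j}$, Grisvard's theory decomposes $u$ into a remainder in $H^{2}$ plus finitely many singular functions built from $r^{m\pi/\omega_{j}}$ (in polar coordinates centred at the vertex), the leading one being $r^{\pi/\omega_{j}}$. A direct computation shows that in $\mathbb{R}^{2}$ a radial profile $r^{s}$ lies in $H^{2}$ near the origin exactly when $s>1$: its second derivatives scale like $r^{s-2}$, and $\int_{0}^{1}r^{2(s-2)}\,r\,dr$ converges iff $s>1$. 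For a convex polygon every interior angle satisfies $\omega_{j}<\pi$, whence $\pi/\omega_{j}>1$, so each singular function is itself in $H^{2}$; the decomposition then forces $u\in H^{2}$ regardless of the values of the singularity coefficients.

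The main obstacle is precisely this regularity step. A polygon is not of class $C^{1,1}$, so the classical elliptic regularity theory underlying the smooth-boundary case is unavailable and the Agmon-Douglis-Nirenberg theorem cannot be quoted directly; the corners are exactly where naive regularity estimates break down. Grisvard's explicit singular-function expansion is the replacement, and convexity is the sharp threshold that raises every singular exponent above $1$, thereby preventing any genuine loss of regularity at the corners. Combining the $H^{2}$-membership so obtained with the uniqueness from the first step completes the argument.
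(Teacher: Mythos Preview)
The paper does not actually prove this theorem: it is quoted directly from Grisvard, introduced by the sentence ``From \cite[Theorem 2.4.3 and the comments following the theorem]{Grisv2} the next result follows,'' and is then used as a black box to deduce the subsequent theorem that $(-\Delta)_{\omega}=(-\Delta)$ on convex polygons. So there is no in-paper argument to compare against; you have supplied more than the paper does.

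Your sketch is sound in outline and correctly identifies the two ingredients. Existence and uniqueness are indeed Lax--Milgram (equivalently, the bijectivity of $(-\Delta)_{\omega}$ established earlier via Friedrich's extension), and the $H^{2}$ part is exactly Grisvard's corner analysis: the Dirichlet singular functions at a vertex of opening $\omega_{j}$ have radial exponent $m\pi/\omega_{j}$, convexity forces $\pi/\omega_{j}>1$, and the $L^{2}$ integrability of second derivatives in two dimensions requires precisely exponent $>1$. One minor sharpening: in Grisvard's decomposition $u=u_{\mathrm{reg}}+\sum c_{j}S_{j}$ with $u_{\mathrm{reg}}\in H^{2}$, the singular part ranges only over those $S_{j}$ whose exponents lie in $(0,1)$, i.e.\ those genuinely outside $H^{2}$; for a convex polygon this index set is empty, so the sum vanishes and $u=u_{\mathrm{reg}}\in H^{2}$. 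Phrasing it as ``each singular function is itself in $H^{2}$'' slightly obscures this, since a decomposition into $H^{2}$ pieces would not be canonical, but the conclusion is the same.
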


Similarly, as in the case of $\Omega$ being of class $C^{1,1}$, we obtain

\begin{theorem}
If $\Omega\subset\mathbb{R}^{2}$ is an open bounded convex polygon, then
$(-\Delta)_{\omega}=(-\Delta)$.
\end{theorem}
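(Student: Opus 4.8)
The plan is to mimic exactly the argument already used for domains of class $C^{1,1}$, substituting Theorem~\ref{poly} for Theorem~\ref{Gris}. As noted in the excerpt, to prove $(-\Delta)_{\omega}=(-\Delta)$ it suffices to show $D((-\Delta)_{\omega})\subset D(-\Delta)=H_{0}^{1}\cap H^{2}$, since the reverse inclusion $(-\Delta)\subset(-\Delta)_{\omega}$ was established in the previous section. So I would take an arbitrary $u\in D((-\Delta)_{\omega})$, set $g=(-\Delta)_{\omega}u\in L^{2}$, and observe that by definition $u\in H_{0}^{1}$ satisfies $\int_{\Omega}\nabla u\nabla v\,dx=\int_{\Omega}gv\,dx$ for all $v\in H_{0}^{1}$.

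Next I would invoke Theorem~\ref{poly}: for this $g\in L^{2}$ there is a unique solution $\tilde{u}\in H_{0}^{1}$ of the same variational problem, and moreover $\tilde{u}\in H^{2}$. Since both $u$ and $\tilde{u}$ solve the variational equation over the test space $H_{0}^{1}$, and the bilinear form $\langle\cdot,\cdot\rangle_{V}=\int_{\Omega}\nabla\cdot\nabla\cdot\,dx$ is coercive on $H_{0}^{1}$ (by the Poincar\'e inequality, which holds here since $\Omega$ is bounded), uniqueness forces $u=\tilde{u}$. Alternatively, and more in the spirit of the $C^{1,1}$ case, one appeals directly to the bijectivity of $(-\Delta)_{\omega}:D((-\Delta)_{\omega})\subset L^{2}\to L^{2}$ from Theorem~\ref{Fried}: the function $\tilde{u}$ from Theorem~\ref{poly} lies in $H_{0}^{1}$ and satisfies $(-\Delta)_{\omega}\tilde{u}=g$, so it belongs to $D((-\Delta)_{\omega})$, and since $(-\Delta)_{\omega}$ is injective and $(-\Delta)_{\omega}u=g$ as well, we get $u=\tilde{u}\in H^{2}$. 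Hence $u\in H_{0}^{1}\cap H^{2}$, proving $D((-\Delta)_{\omega})\subset D(-\Delta)$.

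From this inclusion together with $(-\Delta)\subset(-\Delta)_{\omega}$ one concludes $D((-\Delta))=D((-\Delta)_{\omega})$ and that the operators agree on this common domain, i.e. $(-\Delta)=(-\Delta)_{\omega}$. In fact the proof is a one-liner once the machinery is in place, exactly as the authors wrote for the $C^{1,1}$ case: ``Let $u\in D((-\Delta)_{\omega})$. Putting $g=(-\Delta)_{\omega}u$ we see that $u\in H^{2}$.'' The only subtlety worth flagging — and the reason the polygon case is stated separately rather than folded into the $C^{1,1}$ result — is that the characterization $H_{0}^{1}=\{u\in H^{1};\ tru=0\}$ fails verbatim for polygons (a polygon is not $C^{1}$ at its vertices); instead one uses the side-by-side trace description from the footnote after Theorem~\ref{poly}, namely $H_{0}^{1}=\{u\in H^{1};\ tr_{j}u=0,\ j=1,\dots,m\}$. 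But since Theorem~\ref{poly} is already phrased with test space $H_{0}^{1}$ and asserts $H^{2}$-regularity of the solution, this subtlety is entirely absorbed into the cited regularity theorem, and no further obstacle remains. I expect the ``hard part'' — the elliptic regularity on a convex polygon (Kondrat'ev-type corner analysis, the fact that reentrant corners are what break $H^{2}$-regularity and convexity rules them out) — to be precisely what is quarantined inside Theorem~\ref{poly}, so the proof I write will be short.

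\begin{proof}
Let $u\in D((-\Delta)_{\omega})$. Putting $g=(-\Delta)_{\omega}u\in L^{2}$, the function $u\in H_{0}^{1}$ satisfies
\[


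{\displaystyle\int\nolimits_{\Omega}}

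\nabla u(x)\nabla v(x)dx=


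{\displaystyle\int\nolimits_{\Omega}}

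g(x)v(x)dx
\]
for all $v\in H_{0}^{1}$. By Theorem \ref{poly} there is a unique $\widetilde{u}\in H_{0}^{1}$ solving this variational problem, and $\widetilde{u}\in H^{2}$. Since $\widetilde{u}\in H_{0}^{1}$ and $(-\Delta)_{\omega}\widetilde{u}=g$, we have $\widetilde{u}\in D((-\Delta)_{\omega})$; as $(-\Delta)_{\omega}:D((-\Delta)_{\omega})\subset L^{2}\rightarrow L^{2}$ is bijective (Theorem \ref{Fried}) and $(-\Delta)_{\omega}u=g=(-\Delta)_{\omega}\widetilde{u}$, it follows that $u=\widetilde{u}\in H^{2}$. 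Thus $D((-\Delta)_{\omega})\subset H_{0}^{1}\cap H^{2}=D(-\Delta)$, and together with the already established inclusion $(-\Delta)\subset(-\Delta)_{\omega}$ this yields $(-\Delta)_{\omega}=(-\Delta)$.
\end{proof}
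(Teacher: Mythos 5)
Your proof is correct and follows essentially the same route as the paper: take $u\in D((-\Delta)_{\omega})$, set $g=(-\Delta)_{\omega}u$, and use the uniqueness and $H^{2}$-regularity from Theorem \ref{poly} (together with bijectivity of $(-\Delta)_{\omega}$) to conclude $u\in H^{2}$. You merely spell out the identification $u=\widetilde{u}$ more explicitly than the paper's one-line proof does.
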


\begin{proof}
Let $u\in D((-\Delta)_{\omega})$ and put $g=(-\Delta)_{\omega}u$. From Theorem
\ref{poly} it follows that $u$ being the unique solution of problem
$(-\Delta)_{\omega}u=g$ belongs to $H^{2}$.
\end{proof}

\subsection{\label{s}Spectrum of $(-\Delta)_{\omega}$}

Usually, in the literature, the set of eigenvalues of $(-\Delta)_{\omega}$ is
described. In this section, we show that the spectrum of $(-\Delta)_{\omega}$
contains only the eigenvalues (equivalently, the continuous part of the
spectrum is empty).

Let $\Omega\subset\mathbb{R}^{N}$ be an open bounded set. In \cite[Proposition
8.2.1]{Buttazzo} (see Remark \ref{But}) it is proved that the operator
$[(-\Delta)_{\omega}]^{-1}$ is bounded, compact, self-adjoint and positive
definite in $L^{2}$. Consequently (see \cite[Theorem 8.3.1]{Buttazzo}), it has
a denumerable spectrum consisting of $0$ and of finite multiplicity proper
values $\mu_{j}>0$ such that $0\longleftarrow\mu_{j}<...<\mu_{2}<\mu_{1}$. It
is easy to see that a number $\mu$ is a proper value of $[(-\Delta)_{\omega
}]^{-1}$ if and only if $\lambda=\frac{1}{\mu}$ is a proper value of
$(-\Delta)_{\omega}$ and the corresponding eigenspaces coincide. Applying
Proposition \ref{odwrot} with $A=(-\Delta)_{\omega}$ we see that%
\[
\lbrack(-\Delta)_{\omega}]^{-1}=%
{\displaystyle\int\nolimits_{-\infty}^{\infty}}
\frac{1}{\lambda}E(d\lambda)
\]
where $E$ is the spectral measure given by $(-\Delta)_{\omega}$ (the function
$\mathbb{R}\ni\lambda\longmapsto\lambda\in\mathbb{R}$ is different from $0$ on
$\mathbb{R\diagdown\{}0\}$ and from the fact that $\sigma((-\Delta)_{\omega
})\subset(0,\infty)$ (\footnote{From the fact that for $u\in D((-\Delta
)_{\omega})$ we have $u\in H_{0}^{1}$ and%
\[%
{\displaystyle\int\nolimits_{\Omega}}
\nabla u(x)\nabla v(x)dx=%
{\displaystyle\int\nolimits_{\Omega}}
(-\Delta)_{\omega}u(x)v(x)dx
\]
\par
for any $v\in H_{0}^{1}$ it follows that%
\[
\left\langle (-\Delta)_{\omega}u,u\right\rangle _{L^{2}}=\left\Vert \nabla
u\right\Vert _{L^{2}}\geq const\left\Vert u\right\Vert _{L^{2}}%
\]
where $const$ is a positive constant (the last inequality is the Poincare
inequality). Consequently (cf. \cite[Theorem IV.1.6.]{Mlak}) $\sigma
((-\Delta)_{\omega})\subset(0,\infty)$.
\par
{}}) it follows that $E(\{0\})=0$). So, using (\ref{ow}) we conclude that the
spectrum $\sigma((-\Delta)_{\omega})$ consists only of eigenvalues
$\lambda_{j}=\frac{1}{\mu_{j}}$ of $(-\Delta)_{\omega}$.

In the next (similarly, as in \cite{Buttazzo}), we shall count the eigenvalues
of $(-\Delta)_{\omega}$ according to their multiplicity, i.e. each
$\lambda_{j}$ is repeated $k_{j}$ times where $k_{j}$ is the multiplicity of
$\lambda_{j}$. Consequently, we obtain a sequence $0<\lambda_{1}\leq
\lambda_{2}\leq...\leq\lambda_{j}\rightarrow\infty$ (still denoted by
$(\lambda_{j})$) and a system $\{e_{j}\}$ of eigenfunctions of the operator
$(-\Delta)_{\omega}$, corresponding to $\lambda_{j}$, which is a Hilbertian
basis in $L^{2}$ (see \cite[Theorem 8.3.2]{Buttazzo}) (\footnote{In
\cite{Brezis} it is proved that one can choose $e_{j}\in H_{0}^{1}\cap
C^{\infty}$.}). Thus, for any $u\in L^{2}$ there exist real numbers $a_{j}$,
$j\in\mathbb{N}$, such that%
\[
u(t)=%
{\displaystyle\sum}
a_{j}e_{j}(t)
\]
in $L^{2}$ and $\left\Vert u\right\Vert _{L^{2}}^{2}=%
{\displaystyle\sum}
\left\vert a_{j}\right\vert ^{2}$.

For the form of the spectrum $\sigma((-\Delta)_{\omega})$ and eigenvectors of
$(-\Delta)_{\omega}$ in the case of $\Omega=(0,\pi)^{N}$ (cube in
$\mathbb{R}^{N}$) we refer the reader to \cite[Proposition 8.5.3]{Buttazzo}.
Let us only say that if $N=2$, then the first eigenvalue equals to $2$.

\section{Hilbert space $D([(-\Delta)_{\omega}]^{\beta})$}

Let us fix a number $\beta>0$, an open bounded set $\Omega\subset
\mathbb{R}^{N}$ and consider the operator
\[
\lbrack(-\Delta)_{\omega}]^{\beta}:D([(-\Delta)_{\omega}]^{\beta})\subset
L^{2}\rightarrow L^{2}%
\]
given by%
\begin{align*}
([(-\Delta)_{\omega}]^{\beta}u)(t)  &  =((%
{\displaystyle\int\nolimits_{\sigma((-\Delta)_{\omega})}}
\lambda^{\beta}E(d\lambda))u)(t)\\
&  =(\lim(%
{\displaystyle\int\nolimits_{\sigma((-\Delta)_{\omega})}}
(\lambda^{\beta})_{n}E(d\lambda)u))(t)=\sum\lambda_{j}^{\beta}a_{j}e_{j}(t)
\end{align*}
where%
\begin{multline*}
D([(-\Delta)_{\omega}]^{\beta})\\
=\{u(t)\in L^{2};\
{\displaystyle\int\nolimits_{\sigma((-\Delta)_{\omega})}}
\left\vert \lambda^{\beta}\right\vert ^{2}\left\Vert E(d\lambda)u\right\Vert
^{2}=\sum((\lambda_{j})^{\beta})^{2}a_{j}^{2}<\infty,\\
\text{where }a_{j}\text{-s are such that }u(t)=(%
{\displaystyle\int\nolimits_{\sigma((-\Delta)_{\omega})}}
1E(d\lambda)u)(t)=\sum a_{j}e_{j}(t)\},
\end{multline*}
$E$ is the spectral measure given by $(-\Delta)_{\omega}$ and the convergence
of the series is meant in $L^{2}$. Of course, $[(-\Delta)_{\omega}]^{\beta}$
is self-adjoint, the spectrum $\sigma([(-\Delta)_{\omega}]^{\beta})$ consists
of proper values $\lambda_{j}^{\beta}$, $j\in\mathbb{N}$, and eigenspaces
corresponding to $\lambda_{j}^{\beta}$-s are the same as eigenspaces for
$(-\Delta)_{\omega}$, corresponding to $\lambda_{j}$-s (it follows from a
general result concerning the power of any self-adjoint operator).

It is clear that if $0<\beta_{1}<\beta_{2}$, then%
\begin{equation}
D([(-\Delta)_{\omega}]^{\beta_{2}})\subset D([(-\Delta)_{\omega}]^{\beta_{1}%
}). \label{beta1beta22}%
\end{equation}
Indeed, if $u(t)=%
{\displaystyle\sum}
a_{j}e_{j}(t)\in D([(-\Delta)_{\omega}]^{\beta_{2}})$, then%
\begin{align*}%
{\displaystyle\sum\nolimits_{j=1}^{\infty}}
((\lambda_{j})^{\beta_{1}})^{2}a_{j}^{2}  &  =%
{\displaystyle\sum\nolimits_{\lambda_{j}<1}}
((\lambda_{j})^{\beta_{1}})^{2}a_{j}^{2}+%
{\displaystyle\sum\nolimits_{\lambda_{j}\geq1}}
((\lambda_{j})^{\beta_{1}})^{2}a_{j}^{2}\\
&  \leq%
{\displaystyle\sum\nolimits_{\lambda_{j}<1}}
((\lambda_{j})^{\beta_{1}})^{2}a_{j}^{2}+%
{\displaystyle\sum\nolimits_{\lambda_{j}\geq1}}
((\lambda_{j})^{\beta_{2}})^{2}a_{j}^{2}\\
&  \leq%
{\displaystyle\sum\nolimits_{\lambda_{j}<1}}
((\lambda_{j})^{\beta_{1}})^{2}a_{j}^{2}+%
{\displaystyle\sum\nolimits_{j=1}^{\infty}}
((\lambda_{j})^{\beta_{2}})^{2}a_{j}^{2}<\infty
\end{align*}
(the set of $\lambda_{j}<1$ is empty or finite).

Moreover, since $C_{c}^{\infty}\subset H_{0}^{1}\cap H^{2}\subset
D((-\Delta)_{\omega})$ and%
\[
(-\Delta)_{\omega}u=(-\Delta)u
\]
for $u\in C_{c}^{\infty}$, therefore using (\ref{formulann}) and
(\ref{beta1beta22}) we obtain%
\[
C_{c}^{\infty}\subset D([(-\Delta)_{\omega}]^{\beta})
\]
for any $\beta>0$.

Define in $D([(-\Delta)_{\omega}]^{\beta})$ the scalar product%
\[
\left\langle u,v\right\rangle _{\beta}=\left\langle u,v\right\rangle _{L^{2}%
}+\left\langle [(-\Delta)_{\omega}]^{\beta}u,[(-\Delta)_{\omega}]^{\beta
}v\right\rangle _{L^{2}}.
\]
The corresponding norm is given by%
\[
\left\Vert u\right\Vert _{\beta}=(\left\Vert u\right\Vert _{L^{2}}%
^{2}+\left\Vert [(-\Delta)_{\omega}]^{\beta}u\right\Vert _{L^{2}}^{2}%
)^{\frac{1}{2}}.
\]
Since $[(-\Delta)_{\omega}]^{\beta}$ is closed (being self-adjoint operator),
therefore it is easy to see that $D([(-\Delta)_{\omega}]^{\beta})$ with the
scalar product $\left\langle \cdot,\cdot\right\rangle _{\beta}$ is Hilbert space.

Let us also observe that the scalar product%
\[
\left\langle u,v\right\rangle _{\thicksim\beta}=\left\langle [(-\Delta
)_{\omega}]^{\beta}u,[(-\Delta)_{\omega}]^{\beta}v\right\rangle _{L^{2}}%
\]
determines the equivalent norm%
\[
\left\Vert u\right\Vert _{\thicksim\beta}=\left\Vert [(-\Delta)_{\omega
}]^{\beta}u\right\Vert _{L^{2}}.
\]
Indeed, we have%
\begin{align}
\left\Vert u\right\Vert _{L^{2}}^{2}  &  =%
{\displaystyle\sum\nolimits_{j=1}^{\infty}}
a_{j}^{2}=%
{\displaystyle\sum\nolimits_{\lambda_{j}<1}}
a_{j}^{2}+%
{\displaystyle\sum\nolimits_{\lambda_{j}\geq1}}
a_{j}^{2}\nonumber\\
&  \leq%
{\displaystyle\sum\nolimits_{\lambda_{j}<1}}
a_{j}^{2}+%
{\displaystyle\sum\nolimits_{\lambda_{j}\geq1}}
((\lambda_{j})^{\beta})^{2}a_{j}^{2}\label{eldwa}\\
&  \leq M_{\beta}%
{\displaystyle\sum\nolimits_{\lambda_{j}<1}}
((\lambda_{j})^{\beta})^{2}a_{j}^{2}+%
{\displaystyle\sum\nolimits_{\lambda_{j}\geq1}}
((\lambda_{j})^{\beta})^{2}a_{j}^{2}\nonumber
\end{align}
where%
\[
M_{\beta}=\max\{\frac{1}{((\lambda_{j})^{\beta})^{2}};\ \lambda_{j}<1\}>1
\]
(if the set $\{\lambda_{j};\ \lambda_{j}<1\}$ is empty, we put $M_{\beta}=1$).
Now,
\begin{align}
&  M_{\beta}%
{\displaystyle\sum\nolimits_{\lambda_{j}<1}}
((\lambda_{j})^{\beta})^{2}a_{j}^{2}+%
{\displaystyle\sum\nolimits_{\lambda_{j}\geq1}}
((\lambda_{j})^{\beta})^{2}a_{j}^{2}\label{eldwaprim}\\
&  \leq M_{\beta}%
{\displaystyle\sum\nolimits_{j=1}^{\infty}}
((\lambda_{j})^{\beta})^{2}a_{j}^{2}=M_{\beta}\left\Vert [(-\Delta)_{\omega
}]^{\beta}u\right\Vert _{L^{2}}^{2}=M_{\beta}\left\Vert u\right\Vert
_{\thicksim\beta}^{2}.\nonumber
\end{align}
Thus,%
\[
\left\Vert u\right\Vert _{\thicksim\beta}\leq\left\Vert u\right\Vert _{\beta
}\leq\sqrt{M_{\beta}+1}\left\Vert u\right\Vert _{\thicksim\beta}%
\]
which means the equivalence of the above norms.

In the next, we shall consider the space $D([(-\Delta)_{\omega}]^{\beta})$
with the scalar product $\left\langle \cdot,\cdot\right\rangle _{\thicksim
\beta}$.

\section{Equivalence of equations}

Let $E$ be the spectral measure for a self-adjoint operator $A:D(A)\subset
H\rightarrow H$ with $\sigma(A)\subset\lbrack0,\infty)$, $\alpha_{i}%
\in\mathbb{R}$ for $i=0,...,k$ ($k\in\mathbb{N}\cup\{0\}$) and $0\leq\beta
_{0}<\beta_{1}<...<\beta_{k}$. Fact that the operator $w(A)$ where%
\begin{equation}
w:\mathbb{R}\ni\lambda\rightarrow\left\{
\begin{array}
[c]{ccc}%
0 & ; & \lambda<0\\
\alpha_{k}\lambda^{\beta_{k}}+...+\alpha_{1}\lambda^{\beta_{1}}+\alpha
_{0}\lambda^{\beta_{0}} & ; & \lambda\geq0
\end{array}
\right.  , \label{wielomian}%
\end{equation}
is self-adjoint means that its domain satisfies the equality%
\begin{gather}
D(w(A))=\{u\in L^{2};\ \text{\textit{there exists} }z\in L^{2}\text{
\textit{such that}}\label{sa11}\\%
{\displaystyle\int\nolimits_{\Omega}}
u(t)w(A)v(t)dt=%
{\displaystyle\int\nolimits_{\Omega}}
z(t)v(t)dt\text{ \textit{for any} }v\in D(w(A))\}\nonumber
\end{gather}
and%
\begin{equation}
w(A)u=z \label{sa2}%
\end{equation}
for $u\in D(w(A))$ (\footnote{If the numbers $\beta_{0}$, $\beta_{1}%
$,...,$\beta_{k}$ are positive integers (including zero), then one can omitte
the assumption $\sigma(A)\subset\lbrack0,\infty)$ and consider the function%
\[
w(\lambda)=\alpha_{k}\lambda^{\beta_{k}}+...+\alpha_{1}\lambda^{\beta_{1}%
}+\alpha_{0}\lambda^{\beta_{0}},\ \lambda\in\mathbb{R}\text{.}%
\]
})

From (\ref{formulan}) it follows that $u\in D(w^{2}(A))$ (\footnote{Clearly,
$w^{2}(A)=\sum_{i,j=0}^{k}\alpha_{i}\alpha_{j}A^{\beta_{i}+\beta_{j}}$.}) if
and only if $u\in D(w(A))$, $w(A)u\in D(w(A))$ and, in such a case,%
\begin{equation}
w(A)(w(A)u)=w^{2}(A)u\text{.} \label{betabeta2beta}%
\end{equation}
Using this fact and (\ref{sa11}), (\ref{sa2}), we obtain

\begin{theorem}
\label{slabe}If $g\in L^{2}$, then $u\in D(w^{2}(A))$ and%
\begin{equation}
w^{2}(A)u=g \label{strong}%
\end{equation}
if and only if $u\in D(w(A))$ and%
\begin{equation}%
{\displaystyle\int\nolimits_{\Omega}}
w(A)u(t)w(A)v(t)dt=%
{\displaystyle\int\nolimits_{\Omega}}
g(t)v(t)dt \label{weak}%
\end{equation}
for any $v\in D(w(A))$.
\end{theorem}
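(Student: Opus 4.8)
The plan is to prove the two implications of the biconditional separately, using the characterization of $w^2(A)$ in terms of composition of $w(A)$ with itself that comes from equation (\ref{betabeta2beta}) together with the domain description (\ref{sa11})--(\ref{sa2}) of the self-adjoint operator $w(A)$.

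First I would handle the forward implication. Assume $u \in D(w^2(A))$ with $w^2(A)u = g$. By the remark preceding the theorem (the observation built on (\ref{formulan})), membership $u \in D(w^2(A))$ already tells us that $u \in D(w(A))$, that $w(A)u \in D(w(A))$, and that $w(A)(w(A)u) = w^2(A)u = g$. Now set $\widetilde{u} = w(A)u$. Since $\widetilde{u} \in D(w(A))$ and $w(A)\widetilde{u} = g$, I apply the domain characterization (\ref{sa11})--(\ref{sa2}) of the self-adjoint operator $w(A)$ to the element $\widetilde{u}$: this yields precisely
\[
\int_{\Omega} \widetilde{u}(t)\, w(A)v(t)\, dt = \int_{\Omega} g(t)\, v(t)\, dt
\]
for all $v \in D(w(A))$. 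Substituting $\widetilde{u} = w(A)u$ gives (\ref{weak}).

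For the converse, suppose $u \in D(w(A))$ and (\ref{weak}) holds for all $v \in D(w(A))$. Put $\widetilde{u} = w(A)u \in L^2$. Then (\ref{weak}) reads $\int_{\Omega} \widetilde{u}(t)\, w(A)v(t)\, dt = \int_{\Omega} g(t)\, v(t)\, dt$ for all $v \in D(w(A))$, with $g \in L^2$. This is exactly the defining condition in (\ref{sa11}) for $\widetilde{u}$ to belong to $D(w(A))$, with associated element $z = g$; hence by (\ref{sa2}) we get $\widetilde{u} \in D(w(A))$ and $w(A)\widetilde{u} = g$. So $w(A)u = \widetilde{u} \in D(w(A))$, i.e. $u \in D(w(A))$ and $w(A)u \in D(w(A))$, which by the criterion from (\ref{formulan}) and (\ref{betabeta2beta}) means $u \in D(w^2(A))$ and $w^2(A)u = w(A)(w(A)u) = w(A)\widetilde{u} = g$, giving (\ref{strong}).

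The argument is essentially a direct unwinding of definitions, so there is no deep obstacle; the one point requiring care is making sure the self-adjointness of $w(A)$ is genuinely used in the right place. Self-adjointness enters twice: once to guarantee that the variational identity (\ref{sa11}) characterizes $D(w(A))$ (so that testing against all $v \in D(w(A))$ and landing in $L^2$ forces $\widetilde{u} \in D(w(A))$ rather than merely in $D(w(A)^*)$), and implicitly in Proposition~\ref{wielomf} and (\ref{formulan}), which identify $w^2(A)$ with the composition $w(A)\circ w(A)$ on the correct domain. Since $A = [(-\Delta)_\omega]$ (or any self-adjoint operator with $\sigma(A) \subset [0,\infty)$) satisfies all the hypotheses, and $w(A)$ is self-adjoint by Proposition~\ref{wielomf} combined with (\ref{samosp}), all these tools are available, and the proof reduces to the bookkeeping sketched above.
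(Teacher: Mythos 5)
Your proof is correct and follows exactly the route the paper intends: it combines the composition criterion for $D(w^{2}(A))$ derived from (\ref{formulan}) and (\ref{betabeta2beta}) with the self-adjointness characterization (\ref{sa11})--(\ref{sa2}) applied to $\widetilde{u}=w(A)u$. The paper leaves these steps implicit ("Using this fact and (\ref{sa11}), (\ref{sa2}), we obtain"), and your write-up simply fills in that bookkeeping.
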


\begin{remark}
\label{newvarmethod}The above theorem states that $u$ is the strong solution
to problem (\ref{strong}) if and only if it is the weak one (in a sense).
Consequently, it can be obtained with the aid of a variational method (see
Section \ref{8}). Let us observe that in the case of $A=(-\Delta)_{\omega}$,
$w(\lambda)=\lambda^{\frac{1}{2}}$ and $\Omega\subset\mathbb{R}^{N}$ being an
open bounded set, the unique strong solution of problem
\[
(-\Delta)_{\omega}u=g
\]
(in fact, weak solution to the equation $(-\Delta)u=g$) is a function $u\in
H_{0}^{1}$ such that%
\[%
{\displaystyle\int\nolimits_{\Omega}}
\nabla u(t)\nabla v(t)dt=%
{\displaystyle\int\nolimits_{\Omega}}
g(t)v(t)dt
\]
for any $v\in H_{0}^{1}$. From the above theorem it follows that $u\in
D([(-\Delta)_{\omega}]^{\frac{1}{2}})$ and condition (\ref{weak}) holds true,
i.e.%
\[%
{\displaystyle\int\nolimits_{\Omega}}
[(-\Delta)_{\omega}]^{\frac{1}{2}}u(t)[(-\Delta)_{\omega}]^{\frac{1}{2}%
}v(t)dt=%
{\displaystyle\int\nolimits_{\Omega}}
g(t)v(t)dt
\]
If additionally $\Omega\subset\mathbb{R}^{N}$ is of class $C^{1,1}$ or convex
polygon in $\mathbb{R}^{2}$, then the unique strong solution to (\ref{strong})
is a function $u\in H_{0}^{1}\cap H^{2}$ such that $(-\Delta)u=g$, i.e. the
strong solution to the equation $(-\Delta)u=g$.

Finally, let us point out that even in the case of $N=1$ and $\Omega=(0,\pi)$
the operator $[(-\Delta)_{\omega}]^{\frac{1}{2}}=(-\Delta)^{\frac{1}{2}}$
differs from the operator
\[
H_{0}^{1}\subset L^{2}\rightarrow L^{2},
\]%
\[
x\longmapsto\nabla x=x^{\prime}%
\]
(the last one is not self-adjoint). So, we have a new variational method for
study the equation $(-\Delta)u=g.$
\end{remark}

\section{Compactness of the inverse $(w^{2}((-\Delta)_{\omega}))^{-1}$}

Let us consider the operator $w((-\Delta)_{\omega})$ assuming additionally
that $\alpha_{i}>0$ for $i=0,...,k$. From (\ref{beta1beta22}) it follows that
$D(w((-\Delta)_{\omega}))=D([(-\Delta)_{\omega}]^{\beta_{k}})$. Introduce in
$D([(-\Delta)_{\omega}]^{\beta_{k}})$ a new scalar product%
\[
\left\langle u,v\right\rangle _{w((-\Delta)_{\omega})}=\left\langle
w((-\Delta)_{\omega})u,w((-\Delta)_{\omega})v\right\rangle _{L^{2}}\text{.}%
\]
We have

\begin{lemma}
\label{rownwiel}The scalar products $\left\langle \cdot,\cdot\right\rangle
_{\thicksim\beta_{k}}$ and $\left\langle \cdot,\cdot\right\rangle
_{w((-\Delta)_{\omega})}$ generate the equivalent norms%
\[
\left\Vert u\right\Vert _{\thicksim\beta_{k}}=\left\Vert [(-\Delta)_{\omega
}]^{\beta_{k}}u\right\Vert _{L^{2}}%
\]
and%
\[
\left\Vert u\right\Vert _{w((-\Delta)_{\omega})}=\left\Vert w((-\Delta
)_{\omega})u\right\Vert _{L^{2}}%
\]
in $D([(-\Delta)_{\omega}]^{\beta_{k}})$.
\end{lemma}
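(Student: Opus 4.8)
The plan is to express both norms in terms of the coordinates $a_j$ of $u$ with respect to the Hilbertian basis $\{e_j\}$ of eigenfunctions, and then to bound each norm by a constant multiple of the other by comparing the multiplier sequences $\lambda_j^{\beta_k}$ and $w(\lambda_j)=\sum_{i=0}^{k}\alpha_i\lambda_j^{\beta_i}$ termwise. Writing $u=\sum_j a_j e_j$, Parseval together with the spectral description of the powers gives $\|u\|_{\thicksim\beta_k}^2=\sum_j \lambda_j^{2\beta_k}a_j^2$ and $\|u\|_{w((-\Delta)_\omega)}^2=\sum_j w(\lambda_j)^2 a_j^2$, where the sums range over the eigenvalues $0<\lambda_1\le\lambda_2\le\cdots\to\infty$. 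So the statement reduces to: there exist constants $0<c\le C<\infty$ with $c\,\lambda_j^{\beta_k}\le w(\lambda_j)\le C\,\lambda_j^{\beta_k}$ for all $j$.

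For the upper bound on $w$, I would split the index set into $\{\lambda_j<1\}$ (empty or finite, since $\lambda_j\to\infty$) and $\{\lambda_j\ge1\}$. On the latter set, $\lambda_j^{\beta_i}\le\lambda_j^{\beta_k}$ for each $i=0,\dots,k$ because $\beta_i\le\beta_k$, hence $w(\lambda_j)\le(\alpha_0+\cdots+\alpha_k)\lambda_j^{\beta_k}$; on the finite remaining set one simply enlarges the constant to absorb the finitely many ratios $w(\lambda_j)/\lambda_j^{\beta_k}$. This yields $\|u\|_{w((-\Delta)_\omega)}\le C\|u\|_{\thicksim\beta_k}$. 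For the lower bound, note that since every $\alpha_i>0$ the dominant term already gives $w(\lambda_j)\ge\alpha_k\lambda_j^{\beta_k}$ for \emph{all} $j$ (each summand is nonnegative), so $\|u\|_{\thicksim\beta_k}\le\frac{1}{\alpha_k}\|u\|_{w((-\Delta)_\omega)}$ with no case distinction needed. Combining, with $c=\alpha_k$ and $C$ as above, one gets $c\,\|u\|_{\thicksim\beta_k}\le\|u\|_{w((-\Delta)_\omega)}\le C\,\|u\|_{\thicksim\beta_k}$, which is the claimed equivalence; in particular both are genuine norms on the common domain $D([(-\Delta)_\omega]^{\beta_k})=D(w((-\Delta)_\omega))$ (the latter identification being exactly (\ref{beta1beta22})).

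The only mild subtlety — not really an obstacle — is making the passage between the operator-theoretic definition of the norms and the series expressions rigorous. This is already done in the paper: for $u=\sum a_j e_j\in D([(-\Delta)_\omega]^{\beta_k})$ one has $[(-\Delta)_\omega]^{\beta_k}u=\sum\lambda_j^{\beta_k}a_j e_j$ with $\sum\lambda_j^{2\beta_k}a_j^2<\infty$, and by Proposition \ref{wielomf} the operator $w((-\Delta)_\omega)$ acts as $u\mapsto\sum w(\lambda_j)a_j e_j$, so that $\|w((-\Delta)_\omega)u\|_{L^2}^2=\sum w(\lambda_j)^2a_j^2$; the termwise inequalities above then transfer directly to the $\ell^2$ sums. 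I would present the argument essentially in the two displayed chains of inequalities in the style of (\ref{eldwa})--(\ref{eldwaprim}), which handles the $\beta=\beta_k$, $w(\lambda)=\lambda^{\beta_k}$ special case already, and note that the general polynomial $w$ only changes the constants.
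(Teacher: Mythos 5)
Your argument is correct and, for the upper bound, essentially identical to the paper's: both reduce $\left\Vert w((-\Delta)_{\omega})u\right\Vert _{L^{2}}^{2}$ to series in the coefficients $a_{j}$, split the eigenvalues at $\lambda_{j}=1$, use $\lambda_{j}^{\beta_{i}}\leq\lambda_{j}^{\beta_{k}}$ on $\{\lambda_{j}\geq1\}$, and absorb the finitely many small eigenvalues into a constant. Where you genuinely diverge is the lower bound $\alpha_{k}\left\Vert u\right\Vert _{\thicksim\beta_{k}}\leq\left\Vert u\right\Vert _{w((-\Delta)_{\omega})}$: you obtain it termwise from $w(\lambda_{j})\geq\alpha_{k}\lambda_{j}^{\beta_{k}}$, which holds because every summand $\alpha_{i}\lambda_{j}^{\beta_{i}}$ is positive; the paper instead works at the operator level, expanding $\left\Vert w((-\Delta)_{\omega})u\right\Vert _{L^{2}}^{2}$ as a double sum and showing each cross term $\alpha_{i}\alpha_{j}\left\langle [(-\Delta)_{\omega}]^{\beta_{i}}u,[(-\Delta)_{\omega}]^{\beta_{j}}u\right\rangle _{L^{2}}$ equals $\alpha_{i}\alpha_{j}\left\Vert [(-\Delta)_{\omega}]^{\frac{\beta_{j}-\beta_{i}}{2}+\beta_{i}}u\right\Vert _{L^{2}}^{2}\geq0$ via (\ref{beta1beta2}), so that the diagonal term $\alpha_{k}^{2}\left\Vert [(-\Delta)_{\omega}]^{\beta_{k}}u\right\Vert _{L^{2}}^{2}$ is dominated by the whole sum. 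Your version is shorter and needs only the positivity of the eigenvalues and coefficients; the paper's version has the mild advantage that this half of the estimate never invokes the eigenfunction expansion and would survive for a self-adjoint $A$ with $\sigma(A)\subset(0,\infty)$ whose spectrum is not discrete. In the setting of the lemma both routes yield the same constants, $c=\alpha_{k}$ and a $C$ governed by the finitely many $\lambda_{j}<1$, and your appeal to Proposition \ref{wielomf} to justify $w((-\Delta)_{\omega})u=\sum_{j}w(\lambda_{j})a_{j}e_{j}$ closes the only point where the termwise approach needs care.
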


\begin{proof}
First, let us observe that if $\beta_{i}<\beta_{j}$, then (see
(\ref{beta1beta2}))%
\begin{align*}
&  \alpha_{i}\alpha_{j}\left\langle [(-\Delta)_{\omega}]^{\beta_{i}%
}u,[(-\Delta)_{\omega}]^{\beta_{j}}u\right\rangle _{L^{2}}\\
&  =\alpha_{i}\alpha_{j}\left\langle [(-\Delta)_{\omega}]^{\beta_{i}%
}u,[(-\Delta)_{\omega}]^{\beta_{j}-\beta_{i}}([(-\Delta)_{\omega}]^{\beta_{i}%
}u)\right\rangle _{L^{2}}\\
&  =\alpha_{i}\alpha_{j}\left\langle [(-\Delta)_{\omega}]^{\frac{\beta
_{j}-\beta_{i}}{2}}([(-\Delta)_{\omega}]^{\beta_{i}}u),[(-\Delta)_{\omega
}]^{\frac{\beta_{j}-\beta_{i}}{2}}([(-\Delta)_{\omega}]^{\beta_{i}%
}u)\right\rangle _{L^{2}}\\
&  =\alpha_{i}\alpha_{j}\left\Vert [(-\Delta)_{\omega}]^{\frac{\beta_{j}%
-\beta_{i}}{2}+\beta_{i}}u\right\Vert _{L^{2}}^{2}\geq0.
\end{align*}
Using this property we obtain%
\begin{align*}
\left\Vert u\right\Vert _{\thicksim\beta_{k}}^{2}  &  =\frac{1}{\alpha_{k}%
^{2}}\left\Vert \alpha_{k}[(-\Delta)_{\omega}]^{\beta_{k}}u\right\Vert
_{L^{2}}^{2}\leq\frac{1}{\alpha_{k}^{2}}\left\Vert w((-\Delta)_{\omega
})u\right\Vert _{L^{2}}^{2}\\
&  \leq\frac{C_{1}}{\alpha_{k}^{2}}%
{\displaystyle\sum\nolimits_{i=0}^{k}}
\left\Vert [(-\Delta)_{\omega}]^{\beta_{i}}u\right\Vert _{L^{2}}^{2}%
=\frac{C_{1}}{\alpha_{k}^{2}}%
{\displaystyle\sum\nolimits_{i=0}^{k}}
(%
{\displaystyle\sum\nolimits_{j=1}^{\infty}}
((\lambda_{j})^{\beta_{i}})^{2}a_{j}^{2})\\
&  \leq\frac{C_{1}}{\alpha_{k}^{2}}(%
{\displaystyle\sum\nolimits_{j=1}^{\infty}}
((\lambda_{j})^{\beta_{k}})^{2}a_{j}^{2}+kC_{2}^{2}%
{\displaystyle\sum\nolimits_{j=1}^{\infty}}
((\lambda_{j})^{\beta_{k}})^{2}a_{j}^{2})\\
&  =\frac{C_{1}}{\alpha_{k}^{2}}(1+kC_{2}^{2})%
{\displaystyle\sum\nolimits_{j=1}^{\infty}}
((\lambda_{j})^{\beta_{k}})^{2}a_{j}^{2}=\frac{C_{1}}{\alpha_{k}^{2}}%
(1+kC_{2}^{2})\left\Vert u\right\Vert _{\thicksim\beta_{k}}^{2}%
\end{align*}
where $u(x)=%
{\displaystyle\sum\nolimits_{j=1}^{\infty}}
a_{j}e_{j}(x)$ ($\{e_{j};\ j\in\mathbb{N}$\} is a hilbertian basis in $L^{2}$
consisting of eigenfunctions corresponding to eigenvalues $\lambda_{j}$),
$C_{1}>0$ is a constant that does not depend on $u$ and $C_{2}$ is such that%
\[
(\lambda_{j})^{\beta_{i}}\leq C_{2}(\lambda_{j})^{\beta_{k}}%
\]
for any $i=0,...,k-1$ and $j\in\{j\in\mathbb{N}$, $\lambda_{j}<1\}$ (if the
set $\{j\in\mathbb{N}$, $\lambda_{j}<1\}$ is empty, we put $C_{2}=1$).
\end{proof}

From the above lemma the following proposition follows.

\begin{proposition}
The space $D(w((-\Delta)_{\omega}))=D([(-\Delta)_{\omega}]^{\beta_{k}})$ with
the scalar product $\left\langle \cdot,\cdot\right\rangle _{w((-\Delta
)_{\omega})}$ is complete.
\end{proposition}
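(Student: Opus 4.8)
The plan is to deduce completeness of $D([(-\Delta)_{\omega}]^{\beta_{k}})$ under the scalar product $\langle\cdot,\cdot\rangle_{w((-\Delta)_{\omega})}$ directly from the equivalence of norms established in Lemma \ref{rownwiel}, together with the already-noted completeness of $D([(-\Delta)_{\omega}]^{\beta_{k}})$ under $\langle\cdot,\cdot\rangle_{\beta_{k}}$ (which is in turn equivalent, on this space, to $\langle\cdot,\cdot\rangle_{\thicksim\beta_{k}}$ — this equivalence was shown in Section~3). Since completeness is a property that is invariant under passing to an equivalent norm, the result is immediate once these equivalences are chained together.

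Concretely, first I would recall that $D([(-\Delta)_{\omega}]^{\beta_{k}})$ equipped with $\langle\cdot,\cdot\rangle_{\beta_{k}}$ is a Hilbert space (this is stated in Section~3, using that $[(-\Delta)_{\omega}]^{\beta_{k}}$ is closed as a self-adjoint operator), and that $\|\cdot\|_{\beta_{k}}$ and $\|\cdot\|_{\thicksim\beta_{k}}$ are equivalent on this space, so $D([(-\Delta)_{\omega}]^{\beta_{k}})$ is complete under $\|\cdot\|_{\thicksim\beta_{k}}$ as well. Then, by Lemma \ref{rownwiel}, there are constants $c,C>0$ with $c\|u\|_{\thicksim\beta_{k}}\le\|u\|_{w((-\Delta)_{\omega})}\le C\|u\|_{\thicksim\beta_{k}}$ for all $u\in D([(-\Delta)_{\omega}]^{\beta_{k}})$. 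Hence a sequence is Cauchy in $\|\cdot\|_{w((-\Delta)_{\omega})}$ if and only if it is Cauchy in $\|\cdot\|_{\thicksim\beta_{k}}$, and it converges in one norm if and only if it converges (to the same limit) in the other; completeness therefore transfers. Finally I would note that $\langle\cdot,\cdot\rangle_{w((-\Delta)_{\omega})}$ is genuinely an inner product (positive-definiteness follows because $\|u\|_{w((-\Delta)_{\omega})}=0$ forces $\|u\|_{\thicksim\beta_{k}}=0$, hence $[(-\Delta)_{\omega}]^{\beta_{k}}u=0$, and since $\sigma((-\Delta)_{\omega})\subset(0,\infty)$ this operator is injective, so $u=0$), and that $D(w((-\Delta)_{\omega}))=D([(-\Delta)_{\omega}]^{\beta_{k}})$ by \eqref{beta1beta22}, which identifies the space in question.

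There is no real obstacle here: the entire content has been front-loaded into Lemma \ref{rownwiel} and the earlier norm-equivalence computation. The only point requiring a moment's care is making explicit that "equivalent norms preserve completeness" — a standard fact — and checking positive-definiteness of the new inner product via injectivity of $[(-\Delta)_{\omega}]^{\beta_{k}}$; both are one-line arguments. So the proof will be short: invoke Lemma \ref{rownwiel}, invoke the Section~3 equivalence and Hilbert-space property, and conclude.
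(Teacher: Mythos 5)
Your argument is correct and matches the paper's: the paper likewise derives this proposition immediately from Lemma \ref{rownwiel} together with the completeness of $D([(-\Delta)_{\omega}]^{\beta_{k}})$ under $\left\Vert \cdot\right\Vert _{\thicksim\beta_{k}}$ established in Section~3, using that equivalent norms preserve completeness. Your extra check that $\left\langle \cdot,\cdot\right\rangle _{w((-\Delta)_{\omega})}$ is positive-definite is a harmless (and welcome) addition that the paper leaves implicit.
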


Now, let us fix $g\in L^{2}$ and consider the equation%
\[
w^{2}((-\Delta)_{\omega})u=g
\]
in $D(w^{2}((-\Delta)_{\omega}))$. According to Theorem \ref{slabe}, to show
that there exists a unique solution to this equation it is sufficient to prove
that there exists a unique function $u\in D(w((-\Delta)_{\omega}))$ such that%
\[%
{\displaystyle\int\nolimits_{\Omega}}
w(A)u(t)w(A)v(t)dt=%
{\displaystyle\int\nolimits_{\Omega}}
g(t)v(t)dt
\]
for any $v\in D(w((-\Delta)_{\omega}))$. Indeed, the functional%
\[
D(w((-\Delta)_{\omega}))\ni u\longmapsto%
{\displaystyle\int\nolimits_{\Omega}}
g(x)u(x)dx\in\mathbb{R}%
\]
is linear and continuous with respect to the norm $\left\Vert u\right\Vert
_{w((-\Delta)_{\omega})}$ (continuity follows from (\ref{eldwa}),
(\ref{eldwaprim}) and Lemma \ref{rownwiel}). So, from the Riesz theorem it
follows that there exists a unique function $u_{g}\in D(w((-\Delta)_{\omega
}))$ such that%
\[
\left\langle u_{g},v\right\rangle _{w((-\Delta)_{\omega})}=%
{\displaystyle\int\nolimits_{\Omega}}
g(x)v(x)dx
\]
for any $v\in D(w((-\Delta)_{\omega}))$, i.e.%
\[%
{\displaystyle\int\nolimits_{\Omega}}
w(A)u_{g}(t)w(A)v(t)dt=%
{\displaystyle\int\nolimits_{\Omega}}
g(t)v(t)dt
\]
for any $v\in D(w((-\Delta)_{\omega}))$. Thus, we have proved

\begin{theorem}
For any function $g\in L^{2}$, there exists a unique solution $u_{g}\in
D(w^{2}((-\Delta)_{\omega}))$ to the equation%
\[
w^{2}((-\Delta)_{\omega})u=g.
\]

\end{theorem}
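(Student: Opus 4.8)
The plan is to translate the strong equation into a variational identity and then read off existence and uniqueness from the Riesz representation theorem in the Hilbert space constructed in this section. By Theorem \ref{slabe} applied with $A=(-\Delta)_\omega$, a function $u$ lies in $D(w^2((-\Delta)_\omega))$ and satisfies $w^2((-\Delta)_\omega)u=g$ if and only if $u\in D(w((-\Delta)_\omega))$ and
\[
\int_\Omega w((-\Delta)_\omega)u(t)\,w((-\Delta)_\omega)v(t)\,dt=\int_\Omega g(t)v(t)\,dt
\]
for every $v\in D(w((-\Delta)_\omega))$. The left-hand side is precisely $\langle u,v\rangle_{w((-\Delta)_\omega)}$, so it suffices to find a unique $u_g$ in the space $D(w((-\Delta)_\omega))=D([(-\Delta)_\omega]^{\beta_k})$, equipped with the scalar product $\langle\cdot,\cdot\rangle_{w((-\Delta)_\omega)}$ and complete by the preceding proposition, such that $\langle u_g,v\rangle_{w((-\Delta)_\omega)}=\int_\Omega g v$ for all $v$.

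For this I would verify that the functional $v\mapsto\int_\Omega g(t)v(t)\,dt$ is continuous on this Hilbert space. Cauchy--Schwarz in $L^2$ bounds it by $\|g\|_{L^2}\|v\|_{L^2}$, and the estimates (\ref{eldwa})--(\ref{eldwaprim}) (used with $\beta=\beta_k$) together with the norm equivalence of Lemma \ref{rownwiel} give $\|v\|_{L^2}\le C\|v\|_{\thicksim\beta_k}\le C'\|v\|_{w((-\Delta)_\omega)}$ with $C'$ independent of $v$; hence the functional is bounded. The Riesz theorem then yields a unique $u_g\in D(w((-\Delta)_\omega))$ representing it. Reading Theorem \ref{slabe} in the reverse direction shows $u_g\in D(w^2((-\Delta)_\omega))$ and $w^2((-\Delta)_\omega)u_g=g$; moreover any solution in $D(w^2((-\Delta)_\omega))$ satisfies the same variational identity, hence coincides with the Riesz representative $u_g$, which gives uniqueness.

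The only nonroutine point is the continuity of the load functional, i.e. the bound of $\|v\|_{L^2}$ by $\|v\|_{w((-\Delta)_\omega)}$; this is where the standing assumptions $\alpha_i>0$ (needed in Lemma \ref{rownwiel}) and the finiteness of the set of eigenvalues below $1$ (handled in (\ref{eldwa})) enter. A shorter, purely spectral alternative is also available: since $\alpha_i>0$ forces $w(\lambda)>0$ on $\sigma((-\Delta)_\omega)\subset(0,\infty)$ and $w^2(\lambda)\ge w^2(\lambda_1)>0$ there, Proposition \ref{odwrot} provides the bounded operator $(w^2((-\Delta)_\omega))^{-1}=\int_{-\infty}^{\infty}w(\lambda)^{-2}E(d\lambda)$, so that $u_g:=(w^2((-\Delta)_\omega))^{-1}g$ is the required unique solution; in the eigenbasis, writing $g=\sum b_j e_j$, this is simply $u_g=\sum b_j\,w(\lambda_j)^{-2}e_j$.
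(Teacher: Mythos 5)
Your proposal is correct and follows essentially the same route as the paper: reduce via Theorem \ref{slabe} to the variational identity, note that the left-hand side is the scalar product $\left\langle \cdot,\cdot\right\rangle _{w((-\Delta)_{\omega})}$ on the complete space $D(w((-\Delta)_{\omega}))$, check continuity of the load functional using (\ref{eldwa}), (\ref{eldwaprim}) and Lemma \ref{rownwiel}, and conclude by the Riesz representation theorem. The purely spectral alternative you sketch via Proposition \ref{odwrot} is also sound, but it is only an aside; your main argument matches the paper's.
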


So, the operator $w^{2}((-\Delta)_{\omega}):D(w^{2}((-\Delta)_{\omega
}))\subset L^{2}\rightarrow L^{2}$ is bijective and, consequently, there
exists an inverse operator%
\[
(w^{2}((-\Delta)_{\omega}))^{-1}:L^{2}\rightarrow L^{2}%
\]
defined on the whole space $L^{2}$. Moreover, for every $g\in L^{2}$, we have%
\begin{align*}
\left\Vert (w^{2}((-\Delta)_{\omega}))^{-1}g\right\Vert _{L^{2}}^{2}  &
=\left\Vert (w^{2}((-\Delta)_{\omega}))^{-1}(w^{2}((-\Delta)_{\omega}%
)u_{g})\right\Vert _{L^{2}}^{2}=\left\Vert u_{g}\right\Vert _{L^{2}}^{2}\\
&  \leq M_{2\beta_{k}}\left\Vert u_{g}\right\Vert _{\thicksim2\beta_{k}}%
^{2}\leq\frac{M_{2\beta_{k}}}{\alpha_{k}^{4}}\left\Vert u_{g}\right\Vert
_{w^{2}((-\Delta)_{\omega})}^{2}\\
&  =\frac{M_{2\beta_{k}}}{\alpha_{k}^{4}}\left\Vert w^{2}((-\Delta)_{\omega
})u_{g}\right\Vert _{L^{2}}^{2}=\frac{M_{2\beta_{k}}}{\alpha_{k}^{4}%
}\left\Vert g\right\Vert _{L^{2}}^{2},
\end{align*}
i.e. $(w^{2}((-\Delta)_{\omega}))^{-1}$ is bounded. Using Proposition
\ref{odwrot} with the operator $A=(-\Delta)_{\omega}$ and the function
$b(\lambda)=w^{2}(\lambda)$, we assert that%
\[
(w^{2}((-\Delta)_{\omega}))^{-1}=%
{\displaystyle\int\nolimits_{-\infty}^{\infty}}
\frac{1}{w^{2}(\lambda)}E(d\lambda).
\]
Thus, the operator $(w^{2}((-\Delta)_{\omega}))^{-1}$ is self-adjoint and (see
(\ref{ow})) the spectrum $\sigma(([(-\Delta)_{\omega}]^{\beta})^{-1})$
consists of $0$ and proper values $\mu_{j}=\frac{1}{w^{2}(\lambda_{j})}$
($\lambda_{j}$, $j\in\mathbb{N}$, are proper values of $(-\Delta)_{\omega}$)
such that $0\longleftarrow\mu_{j}<...<\mu_{2}<\mu_{1}$ (we used here the fact
that $\alpha_{j}>0$ for $j=0,...,k$ and, consequently, $w^{2}(\lambda)$ is
increasing, $w^{2}(\lambda)\rightarrow\infty$ as $\lambda\rightarrow\infty$
and $w^{2}(\lambda)\neq0$ for $\lambda>0$). Multiplicities of $\mu_{j}$ are
the same as multiplicities of $w^{2}(\lambda_{j})$ and, in fact, the same as
multiplicities of $\lambda_{j}$.

Finally, we have the operator $(w^{2}((-\Delta)_{\omega}))^{-1}$ which is
defined on $L^{2}$, bounded, self-adjoint with countable spectrum consisting
of $0$ and of finite multiplicity eigenvalues tending to $0$. Using \cite[Part
VI.6]{Mlak} we obtain

\begin{theorem}
\label{comp}The operator $(w^{2}((-\Delta)_{\omega}))^{-1}$ is compact, i.e.
the image of any bounded set in $L^{2}$ is relatively compact in $L^{2}$.
\end{theorem}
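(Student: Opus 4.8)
Theorem (compactness of $(w^2((-\Delta)_\omega))^{-1}$). *Proof plan.*

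The plan is to invoke a standard characterization from spectral theory: a bounded self-adjoint operator on a separable Hilbert space whose spectrum consists of $0$ together with a sequence of finite-multiplicity eigenvalues accumulating only at $0$ is necessarily compact. All the hypotheses of this characterization have just been assembled in the preceding discussion: we showed that $(w^2((-\Delta)_\omega))^{-1}$ is defined on all of $L^2$, is bounded (with the explicit norm estimate $\tfrac{M_{2\beta_k}}{\alpha_k^4}\|g\|_{L^2}^2$ on its square), is self-adjoint (via Proposition \ref{odwrot}, as the integral $\int_{-\infty}^{\infty}\tfrac{1}{w^2(\lambda)}E(d\lambda)$ of a real Borel function against the spectral measure), and has spectrum $\{0\}\cup\{\mu_j\}$ with $\mu_j=\tfrac{1}{w^2(\lambda_j)}\downarrow 0$, each $\mu_j$ of finite multiplicity. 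So the first and main step is simply to cite the appropriate result from \cite[Part VI.6]{Mlak} and verify that its hypotheses are exactly the four bulleted properties above.

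The mechanism behind this citation, which I would either invoke or spell out in one line, is the following: order the distinct nonzero eigenvalues as $|\mu_1|\ge|\mu_2|\ge\cdots\to 0$, let $P_j$ be the (finite-rank, by finite multiplicity) spectral projection onto the eigenspace of $\mu_j$, and set $K_n=\sum_{j=1}^{n}\mu_j P_j$, a finite-rank operator. Because the spectrum away from $0$ is discrete, the spectral theorem gives $\|(w^2((-\Delta)_\omega))^{-1}-K_n\|=\sup_{j>n}|\mu_j|\to 0$ as $n\to\infty$. Thus $(w^2((-\Delta)_\omega))^{-1}$ is a norm limit of finite-rank operators, hence compact; equivalently, it maps bounded sets to relatively compact sets.

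The potential obstacle is not mathematical depth but bookkeeping: one must make sure that the spectrum genuinely has $0$ as its only accumulation point and that there is no nonzero point in the continuous or residual spectrum. This was handled just above using \eqref{ow}: since $\alpha_j>0$ for all $j$, the function $\lambda\mapsto\tfrac{1}{w^2(\lambda)}$ is continuous on $\sigma((-\Delta)_\omega)\subset(0,\infty)$, strictly decreasing, tends to $0$, and never vanishes, so $\sigma\bigl((w^2((-\Delta)_\omega))^{-1}\bigr)=\overline{\{0\}\cup\{1/w^2(\lambda_j)\}}=\{0\}\cup\{\mu_j\}$ with $\mu_j\to 0$; the finite multiplicities are inherited from those of the $\lambda_j$. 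With that in hand, the compactness statement follows immediately, and this is essentially a one-line proof once the preceding preparation is acknowledged.
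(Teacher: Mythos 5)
Your proposal is correct and follows essentially the same route as the paper: after the preceding discussion establishes that $(w^{2}((-\Delta)_{\omega}))^{-1}$ is everywhere defined, bounded, self-adjoint, with spectrum consisting of $0$ and finite-multiplicity eigenvalues tending to $0$, the paper likewise concludes compactness by citing the characterization in \cite[Part VI.6]{Mlak}. Your additional one-line sketch of the finite-rank approximation mechanism is just an unpacking of that citation, not a different argument.
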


\begin{remark}
The case $w(\lambda)=\lambda^{\frac{1}{2}}$ of the above theorem is proved in
\cite[Proposition 8.2.1]{Buttazzo} and that proof is based on the
Rellich-Kondrakov theorem.
\end{remark}

Using the above theorem we obtain the following property.

\begin{proposition}
\label{weak conver 2}If $u_{k}\rightharpoonup u_{0}$ weakly in $D(w((-\Delta
)_{\omega}))$, then $u_{k}\rightarrow u_{0}$ strongly in $L^{2}$ and
$w((-\Delta)_{\omega})u_{k}\rightharpoonup w((-\Delta)_{\omega})u_{0}$ weakly
in $L^{2}$.
\end{proposition}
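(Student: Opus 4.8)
The plan is to read off both conclusions from Theorem~\ref{comp} and from the Hilbert-space description of $D(w((-\Delta)_{\omega}))$ established above. Write $A=(-\Delta)_{\omega}$ and let $w$ be the function (\ref{wielomian}) with all $\alpha_{i}>0$; recall that $D(w(A))=D([(-\Delta)_{\omega}]^{\beta_{k}})$ is complete for the scalar product $\left\langle u,v\right\rangle _{w(A)}=\left\langle w(A)u,w(A)v\right\rangle _{L^{2}}$, that $w$ is increasing on $[0,\infty)$ with $w(\lambda)\to\infty$ as $\lambda\to\infty$, and that $w(\lambda_{j})\geq w(\lambda_{1})>0$ for the eigenvalues $\lambda_{j}$ of $A$. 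Then $w(A)\colon D(w(A))\to L^{2}$ is a bijection: it is surjective since the component equations $w(\lambda_{j})c_{j}=b_{j}$ are solvable with $\sum_{j}w(\lambda_{j})^{2}c_{j}^{2}=\sum_{j}b_{j}^{2}<\infty$, and injective since $w(\lambda_{j})>0$; by Proposition~\ref{odwrot} its inverse $w(A)^{-1}=\int_{-\infty}^{\infty}\frac{1}{w(\lambda)}E(d\lambda)$ is a bounded operator on $L^{2}$, and since $\left\Vert u\right\Vert _{L^{2}}\leq w(\lambda_{1})^{-1}\left\Vert u\right\Vert _{w(A)}$ the inclusion $D(w(A))\hookrightarrow L^{2}$ is continuous. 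The crucial point is that $w(A)^{-1}$ is in fact compact: by Proposition~\ref{odwrot} it is bounded and self-adjoint, and by (\ref{ow}) its spectrum is $\{0\}\cup\{w(\lambda_{j})^{-1}:j\in\mathbb{N}\}$, a countable set whose nonzero points have finite multiplicity and accumulate only at $0$ (because $w(\lambda_{j})\to\infty$), so \cite[Part~VI.6]{Mlak} applies exactly as in the proof of Theorem~\ref{comp}.

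Next I would dispose of the weak-convergence statement, which is the easy one. Since $u_{k}\rightharpoonup u_{0}$ weakly in $D(w(A))$, the sequence $(u_{k})$ is bounded in $D(w(A))$, hence $z_{k}:=w(A)u_{k}$ satisfies $\left\Vert z_{k}\right\Vert _{L^{2}}=\left\Vert u_{k}\right\Vert _{w(A)}\leq C$, so $(z_{k})$ is bounded in $L^{2}$. For every $v\in D(w(A))$,
\[
\left\langle z_{k},w(A)v\right\rangle _{L^{2}}=\left\langle w(A)u_{k},w(A)v\right\rangle _{L^{2}}=\left\langle u_{k},v\right\rangle _{w(A)}\longrightarrow\left\langle u_{0},v\right\rangle _{w(A)}=\left\langle z_{0},w(A)v\right\rangle _{L^{2}},
\]
where $z_{0}:=w(A)u_{0}$ and the limit is the definition of weak convergence in $D(w(A))$. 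As $w(A)$ maps $D(w(A))$ onto $L^{2}$, the vectors $w(A)v$ exhaust $L^{2}$, so $\left\langle z_{k},\varphi\right\rangle _{L^{2}}\to\left\langle z_{0},\varphi\right\rangle _{L^{2}}$ for all $\varphi\in L^{2}$, i.e. $w((-\Delta)_{\omega})u_{k}\rightharpoonup w((-\Delta)_{\omega})u_{0}$ weakly in $L^{2}$.

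The strong $L^{2}$-convergence then follows by feeding this weakly convergent sequence through the compact operator $w(A)^{-1}$. Indeed, a compact operator sends weakly convergent sequences to norm-convergent ones (a bounded operator is weak--weak continuous, and a relatively compact sequence with a weak limit converges to that limit in norm), so from $z_{k}\rightharpoonup z_{0}$ weakly in $L^{2}$ we get
\[
u_{k}=w(A)^{-1}z_{k}\;\longrightarrow\;w(A)^{-1}z_{0}=u_{0}\qquad\text{in }L^{2},
\]
using $w(A)^{-1}w(A)u_{0}=u_{0}$ since $u_{0}\in D(w(A))$. (Equivalently, one may observe that the continuous embedding $D(w(A))\hookrightarrow L^{2}$ is compact — any set bounded in $\left\Vert \cdot\right\Vert _{w(A)}$ is $w(A)^{-1}$ applied to a bounded subset of $L^{2}$, hence relatively compact in $L^{2}$ — and combine this with the weak--weak continuity of that embedding.)

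The only step that needs a little care is the compactness of $w(A)^{-1}$ (equivalently, the compactness of the embedding $D(w(A))\hookrightarrow L^{2}$), and I expect it to be the main obstacle only in the sense that it must be justified rather than assumed; it is no deeper than Theorem~\ref{comp} itself, since $w(A)^{-1}$ has precisely the structural features — boundedness, self-adjointness, a discrete spectrum clustering only at $0$ with finite-multiplicity eigenvalues — on which the proof of that theorem relied. Alternatively, one could note that $(w(A)^{-1})^{2}=(w^{2}(A))^{-1}$ by (\ref{formulan}), which is compact by Theorem~\ref{comp}, and that a bounded self-adjoint operator whose square is compact is itself compact. Everything else in the argument is routine Hilbert-space bookkeeping.
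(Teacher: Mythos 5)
Your proof is correct, but it is organized around a different key fact than the paper's. The paper never proves compactness of $w((-\Delta)_{\omega})^{-1}$ itself: it first treats the case $w=z^{2}$, where Theorem~\ref{comp} applies verbatim to give compactness of $(z^{2}((-\Delta)_{\omega}))^{-1}$, extracts a strongly $L^{2}$-convergent subsequence of $(u_{k})$, identifies its limit with $u_{0}$ via the weak convergence, and upgrades to the full sequence by a contradiction argument; the general case is then reduced to this one through the identity $D(w((-\Delta)_{\omega}))=D([(-\Delta)_{\omega}]^{\beta_{k}})=D(z^{2}((-\Delta)_{\omega}))$ with $z(\lambda)=\lambda^{\beta_{k}/2}$ and the norm equivalence of Lemma~\ref{rownwiel}, while the weak convergence $w((-\Delta)_{\omega})u_{k}\rightharpoonup w((-\Delta)_{\omega})u_{0}$ is dismissed as clear from continuity of the relevant linear maps. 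You instead establish that $w((-\Delta)_{\omega})$ is a bijection onto $L^{2}$ with compact inverse --- either by rerunning the spectral argument behind Theorem~\ref{comp} (Proposition~\ref{odwrot}, formula (\ref{ow}), and the Mlak criterion), or via the neat observation that $(w(A)^{-1})^{2}=(w^{2}(A))^{-1}$ is compact and a bounded self-adjoint operator with compact square is compact --- and then read off both conclusions at once: surjectivity of $w(A)$ gives the weak convergence of $w(A)u_{k}$ against all of $L^{2}$ without any density or boundedness argument, and the weak-to-norm continuity of the compact operator $w(A)^{-1}$ gives $u_{k}\to u_{0}$ in $L^{2}$ with no subsequence extraction. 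Your route is more self-contained and arguably cleaner; the paper's buys economy by reusing Theorem~\ref{comp} as a black box at the cost of the $z=\lambda^{\beta_{k}/2}$ reduction and the contradiction step. Both arguments share the paper's implicit standing assumption that $\beta_{k}>0$ (so that $w(\lambda)\to\infty$ along the spectrum), without which neither Theorem~\ref{comp} nor the proposition itself holds.
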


\begin{proof}
First, let us assume that $w(\lambda)=z^{2}(\lambda)$ where $z$ is a
polynomial of type (\ref{wielomian}) with positive coefficients $\alpha_{i}$.
From the continuity of the linear operators%
\[
D(z^{2}((-\Delta)_{\omega}))\ni u\longmapsto u\in L^{2},
\]%
\[
D(z^{2}((-\Delta)_{\omega}))\ni u\longmapsto z^{2}((-\Delta)_{\omega})u\in
L^{2}%
\]
it follows that $u_{k}\rightharpoonup u_{0}$ weakly in $L^{2}$ and
$z^{2}((-\Delta)_{\omega})u_{k}\rightharpoonup z^{2}((-\Delta)_{\omega})u_{0}$
weakly in $L^{2}$. Theorem \ref{comp} implies that the sequence $(u_{k})$
contains a subsequence $(u_{k_{i}})$ converging strongly in $L^{2}$ to a
limit. Of course, this limit is the function $u_{0}$, i.e. $u_{k_{i}%
}\rightarrow u_{0}$ strongly in $L^{2}$. Supposing contrary and repeating the
above argumentation we assert that $u_{k}\rightarrow u_{0}$ strongly in
$L^{2}$.

Now, let us consider any polynomial $w(\lambda)$ of type (\ref{wielomian}).
Clearly, weak convergence $u_{k}\rightharpoonup u_{0}$ in $D(w((-\Delta
)_{\omega}))$ implies the weak convergence $w((-\Delta)_{\omega}%
)u_{k}\rightharpoonup w((-\Delta)_{\omega})u_{0}$ in $L^{2}$. Moreover,
\[
D(w((-\Delta)_{\omega}))=D([(-\Delta)_{\omega}]^{\beta_{k}})=D([(-\Delta
)_{\omega}]^{2\frac{\beta_{k}}{2}})=D(z^{2}((-\Delta)_{\omega}))
\]
where $z(\lambda)=\lambda^{\frac{\beta_{k}}{2}}$. Applying the proved case of
the proposition to the polynomial $z(\lambda)$ we assert that $u_{k}%
\rightarrow u_{0}$ strongly in $L^{2}$.
\end{proof}

\section{$\label{8}$A boundary value problem}

Let us consider boundary value problem (\ref{problemgeneral}). By a solution
we mean a function $u\in D(w^{2}((-\Delta)_{\omega}))$ satisfying
(\ref{problemgeneral}) a.e. on $\Omega$. According to Theorem \ref{slabe}, to
derive existence of a solution to (\ref{problemgeneral}) it is sufficient to
show that there exists $u\in D(w((-\Delta)_{\omega}))$ such that%
\begin{equation}%
{\displaystyle\int\nolimits_{\Omega}}
w((-\Delta)_{\omega})u(t)w((-\Delta)_{\omega})v(t)dt=%
{\displaystyle\int\nolimits_{\Omega}}
D_{u}F(x,u(x))v(t)dt \label{var}%
\end{equation}
for any $v\in D(w((-\Delta)_{\omega}))$. In such a case, solution to
(\ref{var}) is a solution to (\ref{problemgeneral}). Of course, such a point
$u$ is a critical point of the functional%
\begin{equation}
f:D(w((-\Delta)_{\omega}))\ni u\longmapsto%
{\displaystyle\int\nolimits_{\Omega}}
(\frac{1}{2}\left\vert w((-\Delta)_{\omega})u(x)\right\vert ^{2}%
-F(x,u(x)))dx\in\mathbb{R} \label{fdzial}%
\end{equation}
(clearly, under assumptions guaranteeing Gateaux differentiability of $f$).

\subsection{Gateaux differentiability of $F$}

Assume that function $F$ is measurable in $x\in\Omega$, continuously
differentiable in $u\in\mathbb{R}$ and%
\begin{equation}
\left\vert F(x,u)\right\vert \leq a\left\vert u\right\vert ^{2}+b(x)
\label{fwzrost0}%
\end{equation}%
\begin{equation}
\left\vert D_{u}F(x,u)\right\vert \leq c\left\vert u\right\vert +d(x)
\label{fwzrost1}%
\end{equation}
for $x\in\Omega$ a.e., $u\in\mathbb{R}$, where $a$, $c\geq0$ and $b\in L^{1}$,
$d\in L^{2}$. We have

\begin{proposition}
\label{Diff}Functional $f$ is differentiable in Gateaux sense and the
differential $f^{\prime}(u):D(w((-\Delta)_{\omega}))\rightarrow\mathbb{R}$ of
$f$ at $u$ is given by%
\[
f^{\prime}(u)v=%
{\displaystyle\int\nolimits_{\Omega}}
w((-\Delta)_{\omega})u(t)w((-\Delta)_{\omega})v(t)-D_{u}F(x,u(x))v(t)dt
\]
for $v\in D(w((-\Delta)_{\omega}))$.
\end{proposition}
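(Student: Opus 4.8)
The functional $f$ in (\ref{fdzial}) splits as $f(u)=\tfrac12\|u\|_{w((-\Delta)_\omega)}^2-\Phi(u)$, where $\Phi(u)=\int_\Omega F(x,u(x))\,dx$. The quadratic term is smooth with derivative $v\mapsto\langle w((-\Delta)_\omega)u,w((-\Delta)_\omega)v\rangle_{L^2}$, so the whole problem reduces to showing that $\Phi$ is Gateaux differentiable on $D(w((-\Delta)_\omega))$ with $\Phi'(u)v=\int_\Omega D_uF(x,u(x))v(x)\,dx$. I would carry this out by first observing, via Lemma~\ref{rownwiel} and the embedding built from Theorem~\ref{comp}, that $D(w((-\Delta)_\omega))$ embeds continuously into $L^2$, so it suffices to prove the differentiability claim for the superposition (Nemytskii) operator viewed on $L^2$ and then restrict.

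**Key steps.**
First I would fix $u,v\in D(w((-\Delta)_\omega))\subset L^2$ and, for $s\in(0,1]$, write the difference quotient
\[
\frac{\Phi(u+sv)-\Phi(u)}{s}=\int_\Omega\frac{F(x,u(x)+sv(x))-F(x,u(x))}{s}\,dx .
\]
By the mean value theorem applied to $t\mapsto F(x,t)$, the integrand equals $D_uF(x,u(x)+\theta_s(x)sv(x))v(x)$ for some $\theta_s(x)\in(0,1)$; as $s\to0^+$ this converges pointwise a.e. to $D_uF(x,u(x))v(x)$ by continuity of $D_uF$ in its second variable. Next I would produce an $s$-independent $L^1$ dominating function: using the growth bound (\ref{fwzrost1}),
\[
|D_uF(x,u(x)+\theta_s(x)sv(x))v(x)|\le\bigl(c\,|u(x)|+c\,|v(x)|+d(x)\bigr)|v(x)|,
\]
which lies in $L^1(\Omega)$ since $u,v\in L^2$ and $d\in L^2$ (Cauchy–Schwarz on each product). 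The dominated convergence theorem then yields $\Phi'(u)v=\int_\Omega D_uF(x,u(x))v(x)\,dx$. Adding the derivative of the quadratic part gives the stated formula for $f'(u)v$. Finally I would note that $v\mapsto f'(u)v$ is linear in $v$, and continuous on $D(w((-\Delta)_\omega))$: linearity is immediate, and continuity follows because $|f'(u)v|\le\|w((-\Delta)_\omega)u\|_{L^2}\|w((-\Delta)_\omega)v\|_{L^2}+\|c|u|+d\|_{L^2}\|v\|_{L^2}$, with $\|v\|_{L^2}\le C\|v\|_{w((-\Delta)_\omega)}$ by the norm equivalence of Lemma~\ref{rownwiel} combined with (\ref{eldwa})–(\ref{eldwaprim}).

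**Main obstacle.**
The only genuinely delicate point is constructing the $L^1$-dominating function uniformly in the step size $s$; once $s\in(0,1]$ is restricted this is routine, since $|\theta_s(x)sv(x)|\le|v(x)|$ keeps the argument of $D_uF$ inside $|u(x)|+|v(x)|$ and the linear growth (\ref{fwzrost1}) then gives an integrable majorant independent of $s$. Everything else (pointwise convergence of the quotient, linearity and continuity of the differential) is a direct consequence of the continuity and growth hypotheses on $F$ together with the norm equivalence already established, so no further machinery beyond the dominated convergence theorem is needed.
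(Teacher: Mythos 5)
Your proof is correct and follows essentially the same route as the paper: split off the quadratic term, apply the mean value theorem to the difference quotient of the Nemytskii functional $u\mapsto\int_\Omega F(x,u(x))\,dx$, dominate by a step-size-independent $L^1$ majorant built from the growth condition (\ref{fwzrost1}), and conclude by dominated convergence. The only cosmetic difference is that you take $s\to 0^{+}$ while the Gateaux derivative requires the two-sided limit (the paper uses sequences $\tau_k\in(-1,1)$); your domination argument works verbatim for $s\in[-1,1]\setminus\{0\}$, so nothing is lost.
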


\begin{proof}
Of course, the first term of $f$, equal to $\frac{1}{2}\left\Vert u\right\Vert
_{w((-\Delta)_{\omega})}^{2}$, is Gateaux (even continuously Gateaux)
differentiable and its Gateaux differential at $u$ is of the form%
\[
D(w((-\Delta)_{\omega}))\ni v\longmapsto\left\langle u,v\right\rangle
_{w((-\Delta)_{\omega})}.
\]
So, let us consider the mapping%
\[
g:D(w((-\Delta)_{\omega}))\ni u\longmapsto%
{\displaystyle\int\nolimits_{\Omega}}
F(x,u(x))dx\in\mathbb{R}.
\]
We shall show that the mapping%
\[
g^{\prime}(u):D(w((-\Delta)_{\omega}))\ni v\longmapsto%
{\displaystyle\int\nolimits_{\Omega}}
D_{u}F(x,u(x))v(x)dx\in\mathbb{R},
\]
is Gateaux differential of $g$ at $u$.

Linearity and continuity of the mapping $g^{\prime}(u)$ are obvious. To prove
that $g^{\prime}(u)$ is the Gateaux differential of $g$ at $u$ we shall show
that, for any $v\in D(w((-\Delta)_{\omega}))$,%
\begin{multline*}
\left\vert \frac{g(u+\tau_{k}v)-g(u)}{\tau_{k}}-g^{\prime}(u)v\right\vert \\
=%
{\displaystyle\int\nolimits_{\Omega}}
\left\vert \frac{F(x,u(x)+\tau_{k}v(x))-F(x,u(x))}{\tau_{k}}-D_{u}%
F(x,u(x))v(x)\right\vert dt\\
\rightarrow0
\end{multline*}
for any sequence $(\tau_{k})\subset(-1,1)$ such that $\tau_{k}\rightarrow0$.
Indeed, the sequence of functions%
\[
x\longmapsto\frac{F(x,u(x)+\tau_{k}v(x))-F(x,u(x))}{\tau_{k}}-D_{u}%
F(x,u(x))v(x)
\]
converges pointwise a.e. on $\Omega$ to the zero function (by
differentiability of $F$). Moreover, using the mean value theorem we say that
this sequence is bounded by a function from $L^{1}$:%
\begin{multline*}
\left\vert \frac{F(x,u(x)+\tau_{k}v(x))-F(x,u(x))}{\tau_{k}}-D_{u}%
F(x,u(x))v(x)\right\vert \\
=\left\vert D_{u}F(x,u(x)+s_{x,k\tau k}v(x))v(x)-D_{u}F(x,u(x))v(x)\right\vert
\\
\leq(c(2\left\vert u(x)\right\vert +\left\vert v(x)\right\vert
)+2d(x))\left\vert v(x)\right\vert
\end{multline*}
where $s_{x,k}\in(0,1)$. Thus, using the Lebesgue dominated convergence
theorem we state that $g^{\prime}(u)$ is Gateaux differential of $g$ at $u$.
\end{proof}

\subsection{Existence of a solution}

First, we shall prove the following two propositions.

\begin{proposition}
If there exist constants $A<\frac{\alpha_{k}^{2}}{M_{\beta_{k}}}$, $B$,
$C\in\mathbb{R}$ such that%
\begin{equation}
F(x,u)\leq\frac{A}{2}\left\vert u\right\vert ^{2}+B\left\vert u\right\vert
+C\text{ } \label{fwzrost2}%
\end{equation}
for $x\in\Omega$ a.e., $u\in\mathbb{R}$, then the functional (\ref{fdzial}) is
coercive, i.e. $f(u)\rightarrow\infty$ as $\left\Vert u\right\Vert
_{w((-\Delta)_{\omega})}\rightarrow\infty$.
\end{proposition}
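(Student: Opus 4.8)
The plan is to estimate $f(u)$ from below using the growth condition \eqref{fwzrost2} together with the norm equivalences established earlier. First I would write, for $u(x)=\sum a_j e_j(x)\in D(w((-\Delta)_\omega))$,
\[
f(u)=\frac12\|w((-\Delta)_\omega)u\|_{L^2}^2-\int_\Omega F(x,u(x))\,dx
\geq\frac12\|u\|_{w((-\Delta)_\omega)}^2-\frac{A}{2}\|u\|_{L^2}^2-B\int_\Omega|u(x)|\,dx-C|\Omega|,
\]
using \eqref{fwzrost2} directly (and discarding the term $B|u|$ into an $L^1$ bound via Cauchy--Schwarz on a bounded domain, so $\int_\Omega|u|\le|\Omega|^{1/2}\|u\|_{L^2}$). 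The key quantitative input is that $\|u\|_{L^2}^2\le M_{\beta_k}\|u\|_{\thicksim\beta_k}^2$, which is exactly the chain of inequalities \eqref{eldwa}--\eqref{eldwaprim} applied with $\beta=\beta_k$ (the set $\{\lambda_j<1\}$ being empty or finite makes $M_{\beta_k}$ finite), and that by Lemma \ref{rownwiel} one has $\|u\|_{\thicksim\beta_k}\le\frac{1}{\alpha_k}\|w((-\Delta)_\omega)u\|_{L^2}=\frac1{\alpha_k}\|u\|_{w((-\Delta)_\omega)}$, hence $\|u\|_{L^2}^2\le\frac{M_{\beta_k}}{\alpha_k^2}\|u\|_{w((-\Delta)_\omega)}^2$.

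Substituting these bounds, I would obtain
\[
f(u)\ge\frac12\Bigl(1-\frac{A\,M_{\beta_k}}{\alpha_k^2}\Bigr)\|u\|_{w((-\Delta)_\omega)}^2
-B|\Omega|^{1/2}\,\sqrt{\tfrac{M_{\beta_k}}{\alpha_k^2}}\,\|u\|_{w((-\Delta)_\omega)}-C|\Omega|.
\]
Since the hypothesis $A<\alpha_k^2/M_{\beta_k}$ guarantees the coefficient $\tfrac12(1-A M_{\beta_k}/\alpha_k^2)$ is strictly positive, the right-hand side is a quadratic in $t=\|u\|_{w((-\Delta)_\omega)}$ with positive leading coefficient, so it tends to $+\infty$ as $t\to\infty$; that is precisely $f(u)\to\infty$ as $\|u\|_{w((-\Delta)_\omega)}\to\infty$. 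One should also handle the degenerate case $A\le 0$ separately (or note the argument goes through verbatim, the leading coefficient being even larger), and if $\{\lambda_j<1\}=\varnothing$ then $M_{\beta_k}=1$ and the threshold reads $A<\alpha_k^2$.

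The only mildly delicate point is bookkeeping the constant $M_{\beta_k}$ correctly so that the threshold on $A$ in the statement matches what the computation produces; everything else is the routine "coercive quadratic minus linear" estimate. If one wanted to avoid even writing $B|u|$, one could absorb $B\int|u|$ into $\varepsilon\|u\|_{L^2}^2+C_\varepsilon$ by Young's inequality with a small $\varepsilon$, but that would slightly worsen the constant and is unnecessary here since the quadratic already dominates. So I expect no real obstacle — the proof is a direct estimate, and the heart of it is invoking Lemma \ref{rownwiel} and inequality \eqref{eldwa} to transfer from the $L^2$-norm to the $w((-\Delta)_\omega)$-norm with the sharp constant $M_{\beta_k}/\alpha_k^2$.
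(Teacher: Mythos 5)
Your proof is correct and is essentially identical to the paper's: the same lower bound from \eqref{fwzrost2}, the same Cauchy--Schwarz step $\int_\Omega|u|\le|\Omega|^{1/2}\|u\|_{L^2}$, and the same transfer $\|u\|_{L^2}^2\le M_{\beta_k}\|u\|_{\thicksim\beta_k}^2\le\frac{M_{\beta_k}}{\alpha_k^2}\|u\|_{w((-\Delta)_\omega)}^2$ via \eqref{eldwa}--\eqref{eldwaprim} and Lemma \ref{rownwiel}, yielding the same coercive quadratic. The only cosmetic difference is that the paper disposes of the sign issue by assuming without loss of generality that $A,B\geq 0$ at the outset, where you note the case $A\le 0$ separately.
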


\begin{proof}
Let us assume, without loss of the generality, that $A,B\geq0$. For any $u\in
D(w((-\Delta)_{\omega}))$, we have
\begin{align*}
f(u)  &  =%
{\displaystyle\int\nolimits_{\Omega}}
(\frac{1}{2}\left\vert w((-\Delta)_{\omega})u(x)\right\vert ^{2}%
-F(x,u(x)))dx\\
&  \geq\frac{1}{2}\left\Vert u\right\Vert _{w((-\Delta)_{\omega})}^{2}%
-\frac{A}{2}\left\Vert u\right\Vert _{L^{2}}^{2}-B\sqrt{\left\vert
\Omega\right\vert }\left\Vert u\right\Vert _{L^{2}}-C\left\vert \Omega
\right\vert \\
&  \geq\frac{1}{2}\left\Vert u\right\Vert _{w((-\Delta)_{\omega})}^{2}%
-\frac{A}{2}M_{\beta_{k}}\left\Vert u\right\Vert _{\thicksim\beta_{k}}%
^{2}-B\sqrt{\left\vert \Omega\right\vert M_{\beta_{k}}}\left\Vert u\right\Vert
_{\thicksim\beta_{k}}-C\left\vert \Omega\right\vert \\
&  \geq\frac{1}{2}(1-\frac{AM_{\beta_{k}}}{\alpha_{k}^{2}})\left\Vert
u\right\Vert _{w((-\Delta)_{\omega})}^{2}-B\sqrt{\left\vert \Omega\right\vert
M_{\beta_{k}}}\frac{1}{\alpha_{k}}\left\Vert u\right\Vert _{w((-\Delta
)_{\omega})}-C\left\vert \Omega\right\vert
\end{align*}
where $\left\vert \Omega\right\vert $ is the Lebesgue measure of $\Omega$. It
means that $f$ is coercive.
\end{proof}

\begin{proposition}
Functional (\ref{fdzial}) is weakly sequentially lower semicontinuous.
\end{proposition}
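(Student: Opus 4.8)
The plan is to show that the two terms in the functional $f$ behave correctly under weak convergence: the quadratic term $\tfrac12\|w((-\Delta)_\omega)u\|_{L^2}^2$ is weakly sequentially lower semicontinuous because it is (half the square of) a norm on the Hilbert space $D(w((-\Delta)_\omega))$, and the term $g(u)=\int_\Omega F(x,u(x))\,dx$ is in fact weakly sequentially \emph{continuous} thanks to the compactness of $(w^2((-\Delta)_\omega))^{-1}$. Combining these two facts gives the conclusion.

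More precisely, suppose $u_n\rightharpoonup u_0$ weakly in $D(w((-\Delta)_\omega))$. First I would invoke Proposition \ref{weak conver 2}: this yields $u_n\to u_0$ strongly in $L^2$ and $w((-\Delta)_\omega)u_n\rightharpoonup w((-\Delta)_\omega)u_0$ weakly in $L^2$. From the weak lower semicontinuity of the $L^2$-norm (a standard consequence of $\|y\|\le\liminf\|y_n\|$ for $y_n\rightharpoonup y$ in a Hilbert space), we get
\[
\liminf_{n\to\infty}\tfrac12\|w((-\Delta)_\omega)u_n\|_{L^2}^2\ge \tfrac12\|w((-\Delta)_\omega)u_0\|_{L^2}^2 .
\]
Next I would handle $g$. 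Since $u_n\to u_0$ in $L^2$, passing to a subsequence we may assume $u_n(x)\to u_0(x)$ for a.e.\ $x\in\Omega$ and that there is $h\in L^2$ with $|u_n(x)|\le h(x)$ a.e.\ (the standard pointwise-domination consequence of $L^2$-convergence). By continuity of $F(x,\cdot)$, $F(x,u_n(x))\to F(x,u_0(x))$ a.e., and the growth bound (\ref{fwzrost0}) gives $|F(x,u_n(x))|\le a\,h(x)^2+b(x)\in L^1$. The Lebesgue dominated convergence theorem then yields $g(u_n)\to g(u_0)$ along this subsequence; since the limit is independent of the subsequence, $g(u_n)\to g(u_0)$ for the whole sequence. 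Therefore
\[
\liminf_{n\to\infty} f(u_n)=\liminf_{n\to\infty}\Bigl(\tfrac12\|w((-\Delta)_\omega)u_n\|_{L^2}^2-g(u_n)\Bigr)\ge \tfrac12\|w((-\Delta)_\omega)u_0\|_{L^2}^2-g(u_0)=f(u_0),
\]
which is precisely weak sequential lower semicontinuity of $f$.

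The only genuinely delicate point is the compactness input hidden in Proposition \ref{weak conver 2}: weak convergence in $D(w((-\Delta)_\omega))$ does not a priori give \emph{strong} $L^2$ convergence, and it is exactly the compactness of $(w^2((-\Delta)_\omega))^{-1}$ (Theorem \ref{comp}) that upgrades it. That work has already been done, so here the argument is essentially bookkeeping: the quadratic part uses lower semicontinuity of a Hilbert-space norm, and the nonlinear part uses strong $L^2$ convergence plus the subquadratic growth (\ref{fwzrost0}) to apply dominated convergence. One small care point is that dominated convergence needs an a.e.-convergent, dominated subsequence rather than the full sequence, so the argument is run along subsequences and then closed by the usual ``every subsequence has a further subsequence with the same limit'' observation.
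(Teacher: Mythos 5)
Your proof is correct and follows essentially the same route as the paper: the quadratic term is handled by weak lower semicontinuity of the (Hilbert-space) norm, and the term $\int_\Omega F(x,u(x))\,dx$ is shown to be weakly sequentially continuous by combining Proposition \ref{weak conver 2} (strong $L^2$ convergence) with an a.e.-convergent, $L^2$-dominated subsequence, the growth condition (\ref{fwzrost0}), dominated convergence, and the subsequence principle. The paper's proof is the same argument, phrased with a concluding contradiction instead of your ``every subsequence has a further subsequence'' closing step.
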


\begin{proof}
Weak sequential lower semicontinuity of a norm in Banach space is a classical
result. So, it sufficient to show that the functional
\[
D(w((-\Delta)_{\omega}))\ni u\longmapsto%
{\displaystyle\int\nolimits_{\Omega}}
F(x,u(x)))dx\in\mathbb{R}%
\]
is weakly continuous. But this fact follows immediately from Lebesgue
dominated convergence theorem. Indeed, the weak convergence of a sequence
$(u_{n})$ to $u_{0}$ in $D(w((-\Delta)_{\omega}))$ implies (see Proposition
\ref{weak conver 2}) the convergence $u_{n}\rightarrow u_{0}$ in $L^{2}$. From
\cite[Theorem 4.9]{Brezis} it follows that one can choose a subsequence
$(u_{n_{k}})$ converging a.e. on $\Omega$ to $u_{0}$ and pointwise bounded by
a function belonging to $L^{2}$. Using growth condition (\ref{fwzrost0}) we
assert that%
\[%
{\displaystyle\int\nolimits_{\Omega}}
F(x,u_{n_{k}}(x)))dx\rightarrow%
{\displaystyle\int\nolimits_{\Omega}}
F(x,u_{0}(x)))dx.
\]
Assuming that the convergence
\[%
{\displaystyle\int\nolimits_{\Omega}}
F(x,u_{n}(x)))dx\rightarrow%
{\displaystyle\int\nolimits_{\Omega}}
F(x,u_{0}(x)))dx.
\]
does not hold and repeating the above reasoning we obtain a contradiction.
\end{proof}

Now, let us recall the following classical result: \textit{if }$E$\textit{ is
a reflexive Banach space and functional }$f:E\rightarrow R$\textit{ is weakly
sequentially lower semicontinuous and coercive, then there exists a global
minimum point of }$f$.

Thus, the functional $f$ given by (\ref{fdzial}) has a global minimum point
$u\in D(w((-\Delta)_{\omega}))$. Differentiability of $f$ means that $u$
satisfies (\ref{var}). Consequently, $u$ is a solution to
(\ref{problemgeneral}).

\begin{example}
Let us consider the Dirichlet problem for the equation%
\[
(\alpha_{k}(-\Delta)^{\beta_{k}}+...+\alpha_{0}(-\Delta)^{\beta_{0}}%
)^{2}u(x)=\frac{A}{2}\cos(x_{1}+...+x_{N})u^{2}(x)+b(x)\sin(u(x))
\]
in a bounded open set $\Omega\subset\mathbb{R}^{N}$ with $\alpha_{i}>0$ for
$i=0,...,k$ ($k\in\mathbb{N}\cup\{0\}$) and $0\leq\beta_{0}<\beta
_{1}<...<\beta_{k}$, where $0<A<\frac{\alpha_{k}^{2}}{M_{\beta_{k}}}$,
$M_{\beta_{k}}=\max\{\frac{1}{((\lambda_{j})^{\beta_{k}})^{2}};\ \lambda
_{j}<1\}$ (recall that if there is no $\lambda_{j}<1$, then $M_{\beta_{k}}%
=1$), $b\in L^{\infty}(\Omega,\mathbb{R})$. It is clear that the function
\[
F(x,u)=\frac{A}{2}\cos(x_{1}+...+x_{N})u^{2}+b(x)\sin u
\]
satisfies growth conditions (\ref{fwzrost0}, \ref{fwzrost1}, \ref{fwzrost2}).
Consequently, there exists a solution $u\in D(w^{2}((-\Delta)_{\omega
}))=D([(-\Delta)_{\omega}]^{2\beta_{k}})$ to the problem under consideration.
As we have shown, in the case of the domain $\Omega$ being of class $C^{1,1}$
or a bounded open convex polygon in $\mathbb{R}^{2}$, $(-\Delta)_{\omega}$ can
be replaced by $(-\Delta)$. If $\Omega=(0,\pi)\times(0,\pi)$ then the first
eigenvalue $\lambda_{1}$ of the operator $(-\Delta)_{\omega}=(-\Delta)$ is
equal to $2$ and consequently $M_{\beta_{k}}=1$. It is worth to point out that
Courant-Fisher theorems (see \cite[Theorems 8.4.1, 8.4.2]{Buttazzo}) can be
used to calculate the eigenvalues of $(-\Delta)_{\omega}$.
\end{example}

\end{document}